\numberwithin{equation}{section}
\newcommand{\gb}{\mathfrak{b}}
\renewcommand{\gg}{\mathfrak{g}}
\newcommand{\gl}{\mathfrak{l}}
\newcommand{\gn}{\mathfrak{n}}
\newcommand{\gp}{\mathfrak{p}}
\newcommand{\gt}{\mathfrak{t}}
\newcommand{\gu}{\mathfrak{u}}
\newcommand{\B}{\mathbb{B}}
\newcommand{\C}{\mathbb{C}}
\newcommand{\N}{\mathbb{N}}
\newcommand{\R}{\mathbb{R}}
\newcommand{\Z}{\mathbb{Z}}
\newcommand{\cA}{\mathcal{A}}
\newcommand{\cB}{\mathcal{B}}
\newcommand{\cH}{\mathcal{H}}
\newcommand{\cK}{\mathcal{K}}
\newcommand{\cO}{\mathcal{O}}
\newcommand{\cR}{\mathcal{R}}
\newcommand{\cT}{\mathcal{T}}
\theoremstyle{plain}
\newtheorem{theorem}{Theorem}
\newtheorem{lemma}[theorem]{Lemma}
\newtheorem{corollary}[theorem]{Corollary}
\newtheorem{proposition}[theorem]{Proposition}
\newtheorem{definition}[theorem]{Definition}
\numberwithin{theorem}{section}
\theoremstyle{definition}
\newcommand{\GL}{\mathrm{GL}}
\newcommand{\SO}{\mathrm{SO}}
\newcommand{\SU}{\mathrm{SU}}
\newcommand{\U}{\mathrm{U}}
\newcommand{\supp}{\mathop{\operatorname{supp}}}
\DeclareMathOperator{\Ind}{Ind}
\newcommand{\Hom}{\mathrm{Hom}}
\DeclareMathOperator{\Tr}{Tr}
\DeclareMathOperator{\Span}{span}
\newcommand{\gtss}{ {\gt_{\mathrm{ss}}} }
\newcommand{\ggss}{ {\gg_{\mathrm{ss}}} }
\newcommand{\gubar}{ {\overline{\gu} } }
\newcommand{\nbar}{ {\overline{n}} }
\newcommand{\bbar}{ {\overline{b}} }
\newcommand{\gnbar}{ {\overline{\gn}} }
\newcommand{\gbbar}{ {\overline{\gb}} }
\newcommand{\gpbar}{ {\overline{\gp}} }
\newcommand{\Ubar}{ {\overline{U}} }
\newcommand{\Bbar}{ {\overline{B}} }
\newcommand{\Nbar}{ {\overline{N}} }
\newcommand{\Pbar}{ {\overline{P}} }
\newcommand{\hplam}{\cH^{\lambda}}
\newcommand{\alam}{\cA^\lambda}
\newcommand{\op}{\mathrm{op}}
\newtheorem{observation}[theorem]{Observation}
\newcommand{\End}{\mathrm{End}}
\newcommand{\siot}{{\sigma^\lambda\otimes\overline{\sigma^\lambda}}}
\newcommand{\hpiot}{{\cH_\lambda\otimes\overline{\cH_\lambda}}}
\newcommand{\hniot}{{\cH_{\lambda_n}\otimes\overline{\cH_{\lambda_n}}}}
\newcommand{\viot}{{v_\lambda\otimes\overline{v_\lambda}}}
\newcommand{\vniot}{{v_{\lambda_n}\otimes\overline{v_{\lambda_n}}}}
\newcommand{\sniot}{{\sigma^{\lambda_n}\otimes \overline{\sigma^{\lambda_n}}}}
\newcommand{\siox}[1]{{\sigma^\lambda(#1)v_\lambda \otimes \overline{\sigma^\lambda(#1)v_\lambda}}}
\newcommand{\iot}[2]{{#1 \otimes \overline{#2}}}
\begin{document}
\title[Toeplitz Quantization on Flag Manifolds]{A Representation-Theoretic Approach to Toeplitz Quantization on Flag Manifolds}

\author{Matthew Dawson}
\address{SECIHTI---CIMAT Unidad M\'erida,
              Parque Científico y Tecnológico de Yucatán,
              KM 5.5 Carretera Sierra Papacal --- Chuburná Puerto,
              Sierra Papacal; Mérida, YUC 97302, México}

\author{Yessica Hernández-Eliseo}
\address{SECIHTI---CIMAT Unidad M\'erida,
              Parque Científico y Tecnológico de Yucatán,
              KM 5.5 Carretera Sierra Papacal --- Chuburná Puerto,
              Sierra Papacal; Mérida, YUC 97302, México}

\begin{abstract}
In this paper, we study Toeplitz operators on generalized flag manifolds of compact Lie groups using a representation-theoretic point of view.  We prove several basic properties of these Toeplitz operators, including an abstract formula for their matrix coefficients in terms of the decomposition of certain tensor product representations.  We also show how to identify large commuting families of Toeplitz operators based on invariance of their symbols under certain subgroups.  Finally, we realize the Berezin transform as a convolution with certain functions that form an approximate identity on the generalized flag manifold, which allows us to prove a Szegő Limit Theorem using certain results due to Hirschman, Liang, and Wilson.        
\end{abstract}
\keywords{Toeplitz operators, Berezin transform, flag manifolds, Szegő limit theorem}
\subjclass{47B35, 22C05, 22E45}
\maketitle

\section{Introduction}

Over the last century, research in representation theory, quantum mechanics, and harmonic analysis has developed into an intricate dance between analysis, algebra, and geometry.  A striking example of this interplay can be found in the so-called \textit{Berezin-Toeplitz Quantization}, which was first developed in the famous paper \cite{Berezin}, which built on the preexisting theory of Bergman spaces and Toeplitz operators and spurred further research on Toeplitz operators defined on Bergman spaces for bounded symmetric domains. 

In general, Toeplitz operators do not commute, as is desired from a quantization scheme.  
Nevertheless, it was later discovered by R.\ Quiroga and N.\ Vasilevski (see, for instance, \cite{GQV,QV1,QV2}) that there are several large commuting families of Toeplitz operators on the weighted Bergman spaces of the unit disk, and, more generally, the unit ball.  In particular, the unit ball $\B^n$ in $\C^n$ carries a transitive action of the Hermitian Lie group $\SU(n,1)$.  The weighted Bergman spaces $\cA^\lambda = \cO(\B^n) \cap L^2(\B^n, \mu_\lambda)$, for $\lambda>-1$ consist of all holomorphic functions that are square integrable with respect to the measure $d\mu_\lambda(z) = C_\lambda (1-|z|^2)^\lambda dz$ up to a normalizing constant.  The Toeplitz operator with symbol $f\in L^\infty(\B^n)$ is defined to be the operator $T^\lambda_f = B^\lambda M^\lambda_f\in\cB(\cH_\lambda)$, where $B:L^2(\B^n, \mu_\lambda) \rightarrow\cA^\lambda)$ is the orthogonal projection and $M_f:L^2(\B^n, \mu_\lambda) \rightarrow L^2(\B^n, \mu_\lambda)$ is the operator given by multiplication by $f$.  It was shown in \cite{QV1, QV2} that the Toeplitz operators with symbols invariant under a fixed maximal abelian subgroup of $\SU(n,1)$ all commute with each other. These results were proved using symplectic-geometry techniques and several long computations. 

In fact, the Bergman spaces on the unit ball are a special case of the Bergman spaces used by Harish-Chandra to construct the famous Holomorphic Discrete Series representations on the Hermitian Symmetric spaces (see \cite{K} for a standard accounting of these results). It is thus natural to expect that representation-theoretic methods might be a powerful tool for studying Toeplitz operators, and indeed, the results of Quiroga and Vasilevski were extended to general bounded symmetric domains in \cite{DOQ} using purely representation-theoretic techniques. 

At the same time, the construction of Harish-Chandra for the holomorphic discrete series representations of noncompact Hermitian Lie groups also carries through for the compact-type Hermitian symmetric spaces.  This allows Berezin-Toeplitz quantization to be constructed for these contexts, as well.  In fact, Toeplitz operators and the Berezin transform are well-studied on compact-type Hermitian symmetric spaces, and a great deal of explicit and powerful results are known (see, e.g. \cite{EU, EU2}; an earlier example is \cite{Zhang}, in which the spectrum of the Berezin transform for compact Hermitian symmetric spaces is explicitly calculated).  One can also study Berezin-Toeplitz quantization on general compact Kähler manifolds (see, e.g., \cite{Schlichenmaier}), though such general contexts do not lend themselves as easily to representation-theoretic arguments.

For compact groups, the Harish-Chandra construction can be carried out for any generalized flag manifold; in fact, this corresponds precisely to the construction of irreducible representations of compact Lie groups as spaces of sections of certain holomorphic line bundles by the famous Borel-Weil theorem.  The authors are unaware of any previous work that systematically considers Toeplitz operators and Berezin-Toeplitz quantization on flag manifolds using purely representation-theoretic (as opposed to geometric) techniques: this is the goal of the current article.

The paper \cite{HLW} is particularly relevant for the current work.  Its authors considered Toeplitz operators defined on certain finite-dimensional invariant subspaces of $L^2(G/K)$, where $G$ is a compact group and $K$ is a compact subgroup.  These invariant subspaces are assumed to be sums of isotypic subspaces of $L^2(G/K)$; put another way, for each representation $\pi$ of $G$ that occurs in $L^2(G/K)$, each invariant subspace of $L^2(G/K)$ considered in the paper either contains all copies of $\pi$ that occur in $L^2(G/K)$ or contains none. Under certain conditions, certain sequences (or, more generally, nets) of Toeplitz operators assigned to such invariant subspaces of $L^2(G/K)$ are shown to satisfy a Szegő limit theorem.  As written, the sequences of Toeplitz operators we consider in this paper technically do not fit into the scheme of \cite{HLW}.  Nevertheless, we will show that very minimal changes to the central argument of that paper allow one to prove a Szegő limit theorem for Toeplitz operators on generalized flag manifolds.

The scope of the paper is as follows.  In Section~\ref{sec:notation}, we introduce the notation and basic results we need from from representation theory and harmonic analysis on compact Lie groups.  In Section~\ref{sec:Toeplitz_definition}, we introduce Toeplitz operators on Bergman spaces on flag manifolds and prove some basic properties.  In particular, for each irreducible representation $(\sigma^\lambda, \cH_\lambda)$ of a compact Lie group $G$ with highest weight $\lambda$, we will construct Teoplitz operators on $\cH_\lambda$ for each function $f\in L^1(G/T)$, where $T$ is a maximal torus for $G$.  

In Section~\ref{sec:Toeplitz_commuting}, we show that the same criterion from \cite{DOQ} can be used to construct commuting families of Toeplitz operators.  In Section~\ref{sec:matrix_coefficients}, we determine the kernel of the Toeplitz quantization operator and prove an abstract formula for the matrix coefficients of Toeplitz operators. In Section~\ref{sec:Berezin_transform}, we introduce the Berezin transform and prove several basic properties. In particular, we will see that the Berezin transform is given by convolution with the absolute value squared of a conical function, which we will refer to as the Berezin kernel. 

The heart of the paper is Theorem~\ref{thm:approx_identity} in Section~\ref{sec:approximate_identity}, in which it is shown that these Berezin kernels form an approximate identity on $G/T$ (or more generally on generalized flag manifolds $G/L$, where $P\subseteq G^\C$ is a parabolic subgroup and $L=G\cap P$) when we choose a sequence $\{\lambda_n\}_{n\in\N}$ of highest weights that grow to infinity subject to a certain criterion: in particular, if we expand the highest weights as linear combinations of fundamental weights of $G$, we wish for the minimum and maximum of the coefficients over a given subset S of the fundamental weights of $G$ to tend to infinite at least superlogarithmically (in the case of the minimum) and at most polynomially (in the case of the maximum).  The freedom to choose such sequences $\{\lambda_n\}_{n\in\N}$ in such a way that they are not all multiples of a fixed highest weight (which corresponds to taking tensor powers of a fixed line bundle on the flag manifold $G/T$) is presumably a phenomenon new to this work.

Finally, we prove some immediate consequences of Theorem~\ref{thm:approx_identity}, which include the semiclassical limit of the Berezin transforms, a result about asymptotic multiplicities of irreducible representations in a tensor product and, as a consequence of the results in \cite{HLW}, a Szegő Limit Theorem for Toeplitz operators on flag manifolds.




\section{Notation and Background}
\label{sec:notation}
Let $G$ be a compact, connected Lie group, and let $\gg$ be its Lie algebra.  We fix a maximal torus $T$ with Lie algebra $\gt$. These Lie algebras have respective complexifications $\gg^\C$ and $\gt^\C$, and we denote by $G^\C$ the complexification of $G$.  We denote by $\Delta = \Delta(\gg^\C, \gt^\C)$ the root space of $\gg^\C$ with respect to the Cartan subalgebra $\gt^\C$.  For each $\alpha\in\Delta$, we denote the corresponding root space by $\gg^\C_\alpha\subseteq \gg^\C$.

We recall that $G \subseteq G^\C$ is a maximal compact subgroup, and we denote by $\theta$ the corresponding Cartan involution of $G^\C$. Furthermore, we see that the differentiated Cartan involution on $\gg^\C$ satisfies:
\begin{align*}
   \gg^\C = \gg \oplus i\gg & \rightarrow \gg^\C \\
   X + iY & \mapsto X - iY
\end{align*}
for all $X,Y\in \gg$.  For this reason, we will sometimes use the notation $\theta(X) = \overline{X}$ and $\theta(g) = \overline{g}$ for $X\in\gg^\C$ and $g\in G^\C$.  

Fix a positive subsystem $\Delta^+ \subseteq \Delta$ with corresponding simple roots $\Pi$.  We furthermore define
\[
   \gn = \bigoplus_{\alpha\in\Delta^+} \gg_\alpha^\C, \hspace{1cm} \gnbar = \bigoplus_{\alpha\in\Delta^+} \gg^\C_{-\alpha}.
\]
Note that $\theta(\gn) = \gnbar$ (in fact, $\theta(\gg_\alpha^\C) = \gg_{-\alpha}^\C$ for each $\alpha\in\Delta$) and that $\gg^\C = \gnbar \oplus \gt^\C \oplus \gn$.  We fix the Borel subalgebra $\gb:= \gt^\C \oplus \gn$ and denote the opposite Borel subalgebra by $\gbbar :=\gt^\C \oplus \gnbar$.  Furthermore, we remember that these Lie algebras correspond to the respective closed subgroups $N$, $\overline{N}$, $T^\C$, $B=T^\C N$, and $\overline{B}=T^\C \overline{N}$ of $G^\C$.  Finally, we note that $A=\exp(i\gt)$ is a closed abelian subgroup of $G^\C$ and that $T^\C = TA$. 
The Iwasawa decomposition $G^\C = GA\Nbar$ implies that $G^\C = G\Bbar$ and thus gives rise to the $G$-equivariant diffeomorphism $gT\mapsto g\Bbar$ from $G/T$ to $G^\C/\Bbar$.  

We write $i\gt^* = \Hom_\R (\gt, i\R)$ for the space of purely-imaginary-valued functionals on $\gt$ and note that it can be naturally identified with the space of all complex-linear functionals on $\gt^\C$ that take purely imaginary values on $\gt$.  

We define the semisimple part of $\gg$ by $\ggss=[\gg,\gg]$, and we let $\gtss = \gt \cap \ggss$. Then $\gg = Z(\gg) \oplus \ggss$ and $\gt = Z(\gg)\oplus \gtss$.  The Killing form induces a real inner product on $i\gt^*_{\mathrm{ss}}$, which we denote by $\langle, \rangle$.  By choosing an inner product on $iZ(\gg)$ (which induces an inner product on $iZ(\gg)^*$), we can extend this to an inner product on $i\gt^*$ which we also write $\langle, \rangle$, in such a way that $Z(\gg)\perp \ggss$.  

For each $\alpha\in\Delta$, we define the coroot $\alpha^\vee = \frac{2\alpha}{\langle \alpha, \alpha \rangle}$, thus giving rise to the dual root system $\Delta^\vee$, with corresponding simple roots $\Pi^\vee$.  Denote the simple roots by $\alpha_1, \ldots, \alpha_r$, where $r = \dim \gtss$. Then $\Pi$ and $\Pi^\vee = \{\alpha_1^\vee, \ldots \alpha_r^\vee\}$ are bases for $i\gtss^*$. The fundamental weights $\omega_1, \ldots, \omega_r$ are defined to be the basis for $i\gtss^*$ dual to $\Pi^\vee$ (that is, $\frac{2\langle \omega_i, \alpha_j \rangle}{\langle \alpha_j, \alpha_j\rangle} = \langle \omega_i, \alpha_j^\vee \rangle = \delta_{ij}$ for all $1\leq i\neq j \leq r$). 

Let $\Lambda^+\subseteq i\gt^*$ denote the lattice of dominant, analytically integral weights for $G$ with respect to $T$ and $\Delta^+$.  Then
\[
   \Lambda^+ \subseteq \left\{\left. \sum_{i=1}^r c_i \omega_i \right| c_1,\ldots, c_r \in \Z^{\geq 0} \right\} \oplus iZ(\gg)^*.
\]
\subsection{Parabolic Subalgebras and Flag Manifolds}
\label{sec:parabolic_subgroups}
Each subset $S\subseteq \Pi$ gives rise to a \textit{parabolic subgroup} of $G^\C$ (that is, a subgroup of $G^\C$ that contains a Borel subgroup).
We let $\gt_S \subset \gt$ be such that $i\gt_S^* = \Span\{S\}$.  
Next, let 
\[
   \Gamma = \Delta^+ \cup \{ \alpha\in \Delta \mid \alpha\in \Span{S} \}.
\]
Finally, one defines the parabolic subalgebra 
\[
   \gp = \gt^\C \oplus \bigoplus_{\alpha\in\Gamma} \gg^\C_\alpha. 
\]
One proves that $\Gamma\cap (-\Gamma) \subseteq i\gt^*_S$ is a root system and that  
\[
   \gl^\C := \gt^\C \oplus \bigoplus_{\alpha\in\Gamma\cap(-\Gamma)} \gg^\C_\alpha
\]
is a reductive Lie algebra with Cartan subalgebra $\gt^\C$ and $\Delta(\gl^\C,\gt^\C) = \Gamma\cap(-\Gamma)$.   Furthermore, we choose the positive subsystem $\Delta(\gl^\C,\gt^\C)^+:=\Delta(\gl^\C,\gt^\C) \cap \Delta(\gg^\C,\gt^\C)^+$.  In fact, $\gl^\C$ is a complexification of the real reductive Lie algebra
\[
   \gl = \gt \oplus \bigoplus_{\alpha\in \Delta(\gl^\C,\gt^\C)^+} \R (X_\alpha + \theta(X_\alpha)) \oplus \R i(X_\alpha - \theta(X_\alpha)) ,
\]
where $X_\alpha\in\gg^\C_\alpha \backslash \{0\}$ for all $\alpha\in\Delta$. One shows that $\gl = \gp \cap \gg$.

Let $\gt'_S = \gtss \ominus \gt_S$.  Recalling that $\langle \omega_i , \alpha_j^\vee \rangle = \delta_{ij}$ for all $1\leq i,j \leq r$, we see that
\[
   i(\gt'_S)^* = \Span \{\omega_i \mid \alpha_i \neq S\}.
\]
We have the orthogonal decompositions $\gt_{\mathrm{ss}} = \gt_S'\oplus \gt_S$ and $Z(\gl) = Z(\gg) \oplus \gt'_S$.  It follows that the semisimple part of $\gl$ is
\[
   \gl_{\mathrm{ss}} = \gt_S \oplus \bigoplus_{\alpha\in\Delta(\gl^\C,\gt^\C)^+} \R (X_\alpha + \theta(X_\alpha)) \oplus \R i(X_\alpha - \theta(X_\alpha)) .
\]
We have that $\gl^\C = Z_{\gg^\C}(\gt'_S)$.

We set $\Gamma^+ = \{\alpha\in\Gamma \mid \alpha \notin -\Gamma\}$; then
\[
   \gu = \bigoplus_{\alpha\in\Gamma^+} \gg^\C_\alpha
\]
is a nilpotent Lie algebra and $\gp = \gl^\C \oplus \gu$.  Finally, if we set 
\[
   \gubar = \theta(\gu) = \bigoplus_{\alpha\in\Gamma^+} \gg^\C_{-\alpha},
\]
then 
\[
   \gg^\C = \gubar \oplus \gl^\C \oplus \gu.
\]
We refer to $\gpbar = \gl^\C \oplus \gubar$ as the parabolic subalgebra \textit{opposite} to $\gp$.

We define $U$ and $\Ubar$ to be the analytic subgroups of $G^\C$ with Lie algebras $\gu$ and $\gubar$, respectively.  We let $L = Z_G(\gt'_S)$ and $L^\C = Z_{G^\C}(\gt'_S)$.  Then $P:= LU$ is a parabolic subgroup of $G^\C$ and $\Pbar = L\Ubar$ is its opposite parabolic subgroup, with Lie algebras $\gp$ and $\gpbar$, respectively. 

The compact quotient manifolds $G^\C/P$ and $G^\C/\Pbar$ are called \textbf{partial flag manifolds} and have a natural holomorphic structure. One can show that there are natural $G$-equivariant diffeomorphisms
\begin{align*}
   G/L & \rightarrow G^\C/P  & G/L & \rightarrow G^\C/\Pbar \\
   gL & \mapsto gP      & gL & \mapsto g\Pbar
\end{align*}

\subsection{The Borel-Weil Theorem and Matrix-Coefficient Functions}

Let $\lambda\in i\gt^*$ be a dominant, analytically integral weight. Then $\lambda$ can be integrated to a character $\chi_\lambda: T\rightarrow S^1$ given by $\chi_\lambda(\exp H) = e^{\lambda(H)}$.  This character can be holomorphically extended to a character $\chi_\lambda^\C: T^\C\rightarrow \C^\times$ in the obvious way which can, in turn, be further extended to a holomorphic character $\chi_\lambda^\Bbar: \Bbar\rightarrow \C^\times$ given by $\chi_\lambda^\Bbar(t\nbar) = \chi_\lambda^\C(t)$ for all $t\in T^\C$, $\nbar\in\gnbar$.  When confusion can be avoided, we use the same name $\chi_\lambda$ to refer to the character of $T$ as well its holomorphic extensions to $T^\C$ and to $\Bbar$.

We consider the induced representation $( \Ind_T^G \chi_\lambda, L^2_{\chi_\lambda}(G/T))$, where 
\[
   L^2_{\chi_\lambda}(G/T)  = \{ f\in L^2(G) \mid f(gt) = \chi_\lambda(t^{-1}) f(g) \text{ for almost all } t\in T, g\in G \},
\]
and where $G$ acts, as usual, by left translations.  Note that $\chi_\lambda(t^{-1}) = \chi_{-\lambda}(t) = \overline{\chi_\lambda(t)}$ for all $t\in T$.  Using the Frobenius Reciprocity Theorem, we easily see that
\[
   L^2_{\chi_\lambda}(G/T) \cong \bigoplus_{(\pi, \cH_\pi)\in\widehat{G}} m_\pi \cH_\pi,
\]
where $\widehat{G}$ is the set of all irreducible representations of $G$ (up to equivalence) and where the multiplicity $m_\pi$ is given by $m_\pi = \dim \Hom_T(\chi_\lambda, \pi)$, which is the multiplicity of the weight $\lambda$ in the irreducible representation $(\pi,\cH_\pi)$.

To make this explicit, we define for each unitary representation $(\pi,\cH_\pi)$ of $G$ and each $v,w\in\cH$ the matrix coefficient function:
\begin{align*}
   \pi_{w,v} : G & \rightarrow \C \\
             g & \mapsto \langle w, \pi(g) v\rangle.
\end{align*}
Suppose further that $\pi$ is irreducible and that $v_\lambda \in (\cH_\pi)_\lambda$ is a normalized vector of weight $\lambda$ (not necessarily of highest weight).  Then the map
\begin{align}
   \label{eqn:intertwiningmatrixcoefficient}
   \cH_\pi & \rightarrow L^2_{\chi_\lambda}(G/T) \\
   w & \mapsto \pi_{w,\sqrt{d_{\pi}} v_\lambda} \nonumber
\end{align}
is an intertwining operator that embeds the representation $\pi$ isometrically inside $\Ind_T^G \chi_\lambda$, where $d_{\pi} = \dim \cH_\pi$.  In fact, we see that for each $w\in \cH_\pi$, 
\begin{align*}
   \pi_{w,v_\lambda}(gt) =\,& \langle w , \pi(gt) v_\lambda \rangle \\
                         =\,&\langle w , \chi_\lambda(t) \pi(g) v_\lambda \rangle \\
                         =\,& \overline{\chi_{\lambda}(t)} \langle w , \pi(g) v_\lambda \rangle \\
                         =\,& \chi_{\lambda}(t^{-1}) \pi_{w,v_\lambda}(g) 
\end{align*}
for all $g\in G$ and all $t\in T$.  The Schur orthogonality relations are enough to finish the proof that the intertwining operator in (\ref{eqn:intertwiningmatrixcoefficient}) is an isometry.

Let now $(\sigma^\lambda, \cH_\lambda)$ be the irreducible representation of $G$ with highest weight $\lambda$ 
.  The Borel-Weil theorem identifies the holomorphic sections of the holomorphic line bundle $G\times_T \C_\lambda \cong G^\C \times_\Bbar \C_\lambda$ with the space $\alam = \{ \sigma^\lambda_{w,v_\lambda} \mid w\in \cH_\lambda \}$, where $v_\lambda$ is a normalized highest-weight vector (here $\C_\lambda$ is the copy of $\C$ where the character $\chi_\lambda$ acts).  

In fact, the representation $\sigma^\lambda$ has a unique holomorphic extension $\sigma^\C : G^\C \rightarrow \GL(\cH_\lambda)$.  Furthermore, $(d\sigma)^\C = d(\sigma^\C)$ (that is, the derived representation of $\sigma^\C$ is the complexification of the derived representation of $\sigma^\lambda$).  We thus have that:
\[
   \langle w, d\sigma^\C(X + iY) v_\lambda \rangle = \langle w, d\sigma^\lambda(X) v_\lambda \rangle - i \langle w, d\sigma^\lambda(Y) v_\lambda \rangle
\]
for all $X, Y \in \gg$.  It follows that
\[
   \langle w, d\sigma^\C ( \theta(X+iY) ) v_\lambda \rangle = \langle w, d\sigma^\lambda(\theta X ) v_\lambda \rangle + i \langle w, d\sigma^\lambda(\theta Y ) v_\lambda  \rangle
\]
for all $X, Y \in \gg$, where we use that $\theta(Z) = Z$ for all $Z\in\gg$.  It follows that the map:
\begin{align*}
   \sigma^\C_{w,v_\lambda} : G^\C & \rightarrow \C \\
                       g    & \mapsto \langle w, \sigma^\C( \overline{g} ) v_\lambda \rangle 
\end{align*}
is the unique holomorphic extension of $\sigma^\lambda_{w,v_\lambda}$ to $G^\C$,  where we remind the reader of the notation $\theta(g) = \overline{g}$ for all $g\in G^\C$.  Finally, we see that if $g\in G^\C$, $t\in T$, $a\in A$, and $\nbar \in \Nbar$ (so that $\theta(\nbar) = n\in N$), then:
\begin{align*}
   \sigma^\C_{w,v_\lambda} ( g ta \nbar) = &  \langle w, \sigma^\C(\theta(gta\nbar)) v_\lambda \rangle \\
                                  = & \langle w, \sigma^\C(\overline{g} ta^{-1} n) v_\lambda \rangle \\
                                  = & \langle w, \chi_\lambda^\C(ta^{-1}) \sigma^\C(\overline{g}) v_\lambda \rangle \\
                                  = & \overline{\chi^\C_\lambda(ta^{-1})} \sigma^\C_{w,v_\lambda}(g) \\
                                  = & \chi_\lambda^\C((ta)^{-1}) \sigma^\C_{w,v_\lambda}(g)\\
                               = & \chi_\lambda^\Bbar((ta\nbar)^{-1}) \sigma^\C_{w,v_\lambda}(g), 
\end{align*}
where we use that $\theta(a) = a^{-1}$ and that $\chi_\lambda^\C(a) \in \R$, as well as the fact that $A$ and  $T$commute with each other and normalize $N$.
Thus $\sigma^\C_{w,v_\lambda}$ corresponds to a holomorphic section of the holomorphic line bundle $G^\C \times_\Bbar \C_\lambda $.  In fact, one way of formulating the Borel-Weil theorem is that the space:
\[
   \{f : G^\C \rightarrow \C \mid f\text{ holomorphic}, f(g\bbar) = \chi_\lambda^\Bbar(\bbar)^{-1} f(g) \text{ for all } g\in G^\C, \bbar\in \Bbar \} 
\]
is equal to $\cA^\lambda = \{\sigma^\lambda_{w,v_\lambda} \mid w\in \cH_\lambda\}$ (see \cite[p. 143]{K2}).

From this point on, we will abuse notation and use $\sigma$ to denote both $\sigma$ and $\sigma^\C$.

Finally, we observe that if $G$ and $K\leq G$ are compact groups, then we will make no distinction in this paper between functions on $G/K$ and right-$K$-invariant functions on $G$.  We will further make no distinction between open subsets of $G/K$ and right-$K$-invariant subsets of $G$.
\subsection{Conical Functions}
As before, let $(\sigma^\lambda, \cH_\lambda)$ be an irreducible unitary representation of $G$ with highest weight $\lambda$ and normalized highest-weight vector $v_\lambda\in\cH_\lambda$.  The matrix-coefficient function $\sigma^\lambda_{v_\lambda,v_\lambda}$ satisfies the special property that:
\[
   \sigma^\lambda_{v_\lambda, v_\lambda}(n_1 g n_2) = \langle \sigma^\lambda(n_1)^{-1} v_\lambda, \sigma^\lambda(\overline{g}) \sigma^\lambda(\overline{n_2})) v_\lambda \rangle = \langle v_\lambda, \sigma^\lambda(\overline{g}) v_\lambda \rangle = \sigma^\lambda_{v_\lambda, v_\lambda}(g)  
\]
for all $g\in G^\C$ and all $n_1\in N$, $n_2 \in \Nbar$ (the fact that $v_\lambda$ is a highest-weight vector implies that $\sigma^\lambda(N)v_\lambda = v_\lambda$).  Such functions on $G^\C$ are called \textbf{conical functions}.

If we denote by $L^2(G)_{\mathrm{fin}}$ the space of $G$-finite vectors in $G$, then one can naturally embed $L^2(G)_{\mathrm{fin}}$ inside the space $\cO(G^\C)$ of holomorphic functions on $G^\C$ (one obtains the span of all matrix-coefficient functions of finite-dimensional holomorphic representations of $G^\C$). Then $G^\C \times G^\C$ acts on this space by $(g_1,g_2) \cdot f(x):= f(g_1^{-1} x g_2)$ for all $g_1,g_2,x\in G^\C$.  Furthermore the space of $N\times\Nbar$-invariant functions in $L^2(G)_{\mathrm{fin}}$ is given by the algebraic direct sum 
\[
   L^2(G)_{\mathrm{fin}}^{N\times\Nbar} = \bigoplus^{\mathrm{alg}}_{\lambda\in\Lambda^+} \C \sigma^\lambda_{v_\lambda, v_\lambda}.
\]

Denote by $\Delta_i := \sigma^{\omega_i}_{v_{\omega_i}, v_{\omega_i}}$ the conical function corresponding to the irreducible representation $\sigma^{\omega_i}$ with highest weight given by the fundamental weight $\omega_i$, where $1\leq i\leq r$.  This choice of notation is motivated by the fact that, for the standard choices of maximal torus and simple roots for the groups $\U(n)$ and $\SU(n)$, one has that $\Delta_i(g)$ is the $(n-i)$th principal minor (starting from the bottom right corner) of $g$ for all $g\in \U(n)$.

In what follows, we will denote the conical matrix-coefficient function $\sigma^\lambda_{v_\lambda,v_\lambda}$ by $\Delta^\lambda$ for each $\lambda\in\Lambda^+$.  This is motivated by the following lemma:

\begin{lemma}
   \label{lem:conical_tensor_prod}
   If $\lambda = \lambda_1 \omega_1 + \cdots +\lambda_r \omega_r + \lambda_Z\in \Lambda^+$, where $\lambda_i \in\Z^{\geq 0}$ for $1\leq i \leq r$ and $\lambda_{Z}\in iZ(\gg)^*$, then the corresponding conical function is given by:
   \[
      \Delta^\lambda(g) = \Delta_1(g)^{\lambda_1} \cdots \Delta_r(g)^{\lambda_r} \chi_{\lambda_Z}(g).
   \]
   for all $g\in G^\C$, where $\chi_{\lambda_Z}$ is the central character of $G$ corresponding to the weight $\lambda_Z\in iZ(\gg)^*$.
\end{lemma}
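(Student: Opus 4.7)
The plan is to realize $\cH_\lambda$ as the cyclic subrepresentation generated by the tensor of highest-weight vectors inside a suitable tensor product, and then to exploit the multiplicativity of matrix coefficients under tensor products. Concretely, I would consider the tensor product representation
\[
\pi := (\sigma^{\omega_1})^{\otimes \lambda_1} \otimes \cdots \otimes (\sigma^{\omega_r})^{\otimes \lambda_r} \otimes \chi_{\lambda_Z}
\]
acting on $V := \cH_{\omega_1}^{\otimes \lambda_1} \otimes \cdots \otimes \cH_{\omega_r}^{\otimes \lambda_r} \otimes \C_{\lambda_Z}$, where $\C_{\lambda_Z}$ denotes the one-dimensional representation of $G$ on which $\chi_{\lambda_Z}$ acts. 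Set
\[
\tilde v := v_{\omega_1}^{\otimes \lambda_1} \otimes \cdots \otimes v_{\omega_r}^{\otimes \lambda_r} \otimes 1 \in V.
\]
A short computation shows that $\tilde v$ is a highest-weight vector of weight $\sum_i \lambda_i \omega_i + \lambda_Z = \lambda$ in $V$: the $T$-weight is the sum of the individual weights since $T$ acts diagonally on a tensor product, and $\gn \cdot \tilde v = 0$ follows from the Leibniz rule combined with $\gn \cdot v_{\omega_i} = 0$ for each $i$.

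By the uniqueness (up to equivalence) of the irreducible representation of highest weight $\lambda$, the cyclic subrepresentation $U(\gg^\C)\cdot\tilde v \subseteq V$ is equivalent to $\cH_\lambda$. Schur's lemma, together with the fact that $\|\tilde v\| = 1 = \|v_\lambda\|$, permits one to choose this equivalence as an isometric intertwiner $\Phi: \cH_\lambda \to V$ satisfying $\Phi(v_\lambda) = \tilde v$. Since isometric intertwiners preserve matrix coefficients and the tensor-product action of $G$ is diagonal, it follows that for all $g \in G$,
\[
\sigma^\lambda_{v_\lambda,v_\lambda}(g) = \langle \tilde v, \pi(g) \tilde v\rangle = \prod_{i=1}^{r} \langle v_{\omega_i}, \sigma^{\omega_i}(g) v_{\omega_i}\rangle^{\lambda_i} \cdot \chi_{\lambda_Z}(g) = \prod_{i=1}^{r} \Delta_i(g)^{\lambda_i} \cdot \chi_{\lambda_Z}(g).
\]

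Both sides of the resulting identity extend to holomorphic functions on all of $G^\C$: the left side is $\Delta^\lambda$ via the canonical holomorphic extension of the conical function discussed earlier, while the right side is a finite product of holomorphic functions---each $\Delta_i$ is a conical function, and $\chi_{\lambda_Z}$ is a character of $G$ that extends holomorphically to $G^\C$. Since the two sides agree on the totally real submanifold $G$, they must agree on all of $G^\C$ by the identity principle for holomorphic functions. I do not foresee any deep obstacles; the only point meriting care is the Schur-normalization argument confirming that the intertwiner sends $v_\lambda$ precisely to $\tilde v$ rather than merely to a scalar multiple.
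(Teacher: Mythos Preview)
Your argument is correct and follows essentially the same route as the paper's proof: both identify the tensor of highest-weight vectors as a highest-weight vector of weight $\lambda$ in the tensor product and deduce multiplicativity of the conical matrix coefficients. The paper packages this as the two-term identity $\sigma^{\lambda+\mu}_{v_{\lambda+\mu},v_{\lambda+\mu}} = \sigma^\lambda_{v_\lambda,v_\lambda}\,\sigma^\mu_{v_\mu,v_\mu}$ (the ``Cartan product'' fact) and applies it iteratively, whereas you carry out the full tensor product at once and spell out the holomorphic extension to $G^\C$; these are cosmetic differences, not substantive ones. Your worry about the Schur normalization is harmless: if $\Phi(v_\lambda)=c\,\tilde v$ with $|c|=1$, then $\langle \Phi(v_\lambda),\pi(g)\Phi(v_\lambda)\rangle = |c|^2\langle \tilde v,\pi(g)\tilde v\rangle$, so the phase cancels.
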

\begin{proof}
   This is a consequence of the fact that if $\lambda, \mu\in \Lambda^+$, then the so-called \textbf{Cartan product representation} $\sigma^{\lambda+\mu}$ has multiplicity one in the tensor product $\sigma^\lambda \otimes \sigma^\mu$, and furthermore a normalized highest-weight vector for the embedded copy of $\sigma^{\lambda+\mu}$ in $\sigma^\lambda \otimes \sigma^\mu$ is given by $v_\lambda \otimes v_\mu$.  It thus follows that

   \[
      \sigma^{\lambda+\mu}_{v_{\lambda+\mu}, v_{\lambda+\mu}} = \sigma^\lambda_{v_\lambda,v_\lambda} \sigma^\mu_{v_\mu,v_\mu}
   \]
   as functions on $G^\C$.
\end{proof}

\begin{lemma}
   \label{lem:stabilizer_of_conical}
   Let $\lambda = \lambda_1 \omega_1 + \cdots + \lambda_r \omega_r + \lambda_Z \in \Lambda^+$ as above. Then
   \[
      \{ g\in G^\C \mid \sigma^\lambda(g) v_\lambda \in \C v_\lambda \} =P 
   \]
   where $P$ is the parabolic subgroup of $G^\C$ corresponding to the subset $S = \{\alpha_i \in \Pi \mid \lambda_i = 0\}\subseteq \Pi$ and $\chi_\lambda \in \widehat{T}$ is the character of $T$ corresponding to the weight $\lambda$.

\end{lemma}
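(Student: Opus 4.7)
The plan is to establish both inclusions using the infinitesimal action of $\gp$ on $v_\lambda$ together with the classification of closed subgroups of $G^\C$ containing the Borel subgroup $B$.

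For the inclusion $P \subseteq Q := \{g \in G^\C \mid \sigma^\lambda(g) v_\lambda \in \C v_\lambda\}$, I would show that the Lie algebra $\gp = \gl^\C \oplus \gu$ preserves the line $\C v_\lambda$. The nilpotent radical $\gu = \bigoplus_{\alpha \in \Gamma^+} \gg^\C_\alpha$ annihilates $v_\lambda$, since its root vectors correspond to positive roots and $v_\lambda$ is a highest-weight vector. For the Levi part $\gl^\C$, observe that $v_\lambda$ is a highest-weight vector with respect to the positive system $\Delta(\gl^\C,\gt^\C)^+$, so it generates an irreducible $\gl^\C$-submodule with highest weight $\lambda|_{\gt^\C}$. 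Because $\langle \lambda, \alpha_i^\vee\rangle = \lambda_i = 0$ for every $\alpha_i \in S$, this highest weight is trivial on the semisimple part $\gl_{\mathrm{ss}}$, forcing the submodule to be one-dimensional. Hence $\gl^\C$ acts on $\C v_\lambda$ through a character, and $d\sigma^\lambda(\gp) v_\lambda \subseteq \C v_\lambda$. Since $P$ is connected (being a parabolic subgroup of the connected complex reductive group $G^\C$), this yields $P \subseteq Q$.

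For the reverse inclusion, I would note that $Q$ is a closed subgroup of $G^\C$ containing $B \subseteq P$. By the standard classification of subgroups of a connected reductive complex group that contain a fixed Borel, $Q$ must be a standard parabolic subgroup $P_T$ for some $T \subseteq \Pi$, and $P = P_S \subseteq P_T$ forces $S \subseteq T$. To verify $T \subseteq S$, suppose $\alpha_i \in T$. Then $X_{-\alpha_i}$ lies in the Lie algebra of $Q$, so $d\sigma^\lambda(X_{-\alpha_i}) v_\lambda \in \C v_\lambda$. However, this vector has weight $\lambda - \alpha_i \neq \lambda$, so it must in fact vanish. Applying $X_{\alpha_i}$ to the identity $X_{-\alpha_i} v_\lambda = 0$ and using $[X_{\alpha_i}, X_{-\alpha_i}] = H_{\alpha_i}$ (suitably normalized) together with $X_{\alpha_i} v_\lambda = 0$ gives $\lambda_i v_\lambda = 0$, whence $\lambda_i = 0$ and $\alpha_i \in S$. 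Therefore $T = S$ and $Q = P$.

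The main technical subtlety lies in the forward direction, specifically in verifying that root vectors $X_{-\alpha}$ with $\alpha$ a negative root in $\Span\{S\}$ preserve $\C v_\lambda$: this does not follow merely from the highest-weight condition on $v_\lambda$, since such $\alpha$ are not positive roots of $\gg^\C$. The clean way to handle this is to package the claim through the irreducibility of the $\gl^\C$-submodule generated by $v_\lambda$, reducing the question to the fact that an irreducible $\gl^\C$-module whose highest weight annihilates $\gl_{\mathrm{ss}}$ must be one-dimensional --- which is precisely the hypothesis encoded in the definition of $S$.
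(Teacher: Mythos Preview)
Your proof is correct and follows the same global architecture as the paper's: both recognize that the stabilizer $Q$ of $\C v_\lambda$ contains $B$, hence is a standard parabolic, and both pin down which negative simple root vectors lie in its Lie algebra via the $\lsl_2$-identity $X_{\alpha_i} X_{-\alpha_i} v_\lambda = \lambda_i v_\lambda$. The reverse inclusion $Q\subseteq P$ is handled identically in both.

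The genuine difference is in the forward inclusion $P\subseteq Q$ (equivalently, in showing $X_{-\alpha_i}v_\lambda=0$ when $\lambda_i=0$). The paper proves this by a bare-hands root computation: it checks, for every $\beta\in\Delta^+$, that $X_\beta X_{-\alpha_i}v_\lambda = [X_\beta,X_{-\alpha_i}]v_\lambda$ vanishes by a three-way case split on whether $\beta-\alpha_i$ is a positive root, zero, or not a root, concluding that $X_{-\alpha_i}v_\lambda$ would be a second highest-weight vector and hence must vanish by irreducibility of $\sigma^\lambda$. You instead package this into the structural observation that $v_\lambda$ generates an irreducible $\gl^\C$-submodule whose highest weight pairs trivially with every coroot $\alpha_i^\vee$ for $\alpha_i\in S$, so that submodule is one-dimensional. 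Your route is cleaner and avoids the case analysis, at the cost of invoking complete reducibility for $\gl^\C$ (to know the cyclic highest-weight submodule is irreducible); the paper's route is entirely elementary but longer. Both are standard and either would be acceptable here.
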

\begin{proof}
   We will use the fact that $v_\lambda$ is a normalized highest-weight vector for $(\sigma^\lambda, \cH_\lambda)$.   Clearly $\sigma^\lambda(B)v_\lambda \in \C v_\lambda$.  Thus, $P=\{g\in G^\C \mid \sigma^\lambda(g) v_\lambda\in \C v_\lambda\}$ must be a parabolic subgroup of $G$ containing $B$.  In order to show that it is equal to the parabolic subgroup $P$ of $G$ for the set of simple roots $\alpha_i$ such that $\lambda_i =0$, we will show,  for all $1\leq i \leq r$, that $d\sigma^\lambda(\gg^\C_{-\alpha_i})v_\lambda \in \C v_\lambda$ if and only if $\lambda_i = 0$. It will follow that $\gg_{-\alpha_i} \subseteq \gp$ if and only if $\lambda_i=0$, which will finish the proof.

   Fix $\alpha_i\in \Pi$, where $1\leq i \leq r$.  Let $X_{\alpha_i}\in \gg_{\alpha_i} \backslash\{0\}$ and let $X_{-\alpha_i}\in \gg_{-\alpha_i}$ such that $[X_{\alpha_i},X_{-\alpha_i}] = H_{\alpha_i^\vee}\in i\gtss$, where $H_{\alpha_i^\vee}\in i\gtss$ is defined by $\gamma(H_{\alpha_i^\vee}) = \frac{2\langle \gamma, \alpha_i\rangle}{\langle \alpha_i, \alpha_i\rangle}$ for all $\gamma\in i\gt^*$. Then
   \begin{align*}
      d\sigma^\lambda(X_{\alpha_i})d\sigma^\lambda(X_{-\alpha_i})v_\lambda & = d\sigma^\lambda([X_{\alpha_i},X_{-\alpha_i}])v_\lambda \\
      & = d\sigma^\lambda(H_{\alpha_i^\vee})v_\lambda  \\
      & = \frac{2\langle \lambda, \alpha_i \rangle}{\langle \alpha_i, \alpha_i \rangle} v_\lambda \\
      & = \lambda_i v_\lambda
   \end{align*}

   It follows that $d\sigma^\lambda(X_{-\alpha_i}) v_\lambda \neq 0$ if $\lambda_i \neq 0$.  Furthermore, $d\sigma^\lambda(X_{-\alpha_i}) v_\lambda$ is a vector of weight $\lambda-\alpha_i \neq \lambda$ and is thus linearly independent from $v_\lambda$.

   Now suppose that $\lambda_i = 0$.  We will show that $d\sigma^\lambda(X_{-\alpha_i})v_\lambda$ is either zero or else a highest-weight vector linearly independent of $v_\lambda$ (which is a contradiction, since $\sigma^\lambda$ is irreducible). 

   Each $\beta\in\Delta^+$ can be written as $\beta=\beta_1 \alpha_1 + \cdots \beta_r \alpha_r$ with $\beta_1,\ldots,\beta_r \in \{0,1,2\}$.  Then, as before
   \[
      d\sigma^\lambda(X_\beta)d\sigma^\lambda(X_{-\alpha_i})v_\lambda =
        d\sigma^\lambda([X_\beta, X_{-\alpha_i}]) v_\lambda.
   \]  

   But $[X_\beta, X_{-\alpha_i}] \in \gg^\C_{\beta-\alpha_i}$. There are now three possibilities: either $\gg^\C_{\beta-\alpha_i}=\{0\}$, or else $\beta-\alpha_i\in \Delta$, or $\beta-\alpha_i=0$.   If $\beta=\alpha_i$, then the above argument shows that $d\sigma^\lambda([X_{\alpha_i},X_{-\alpha_i}])v_\lambda = 0,$ since $\lambda_i = 0$. If $\beta-\alpha_i$ is a root, then it must be a positive root, and thus $d\sigma^\lambda([X_\beta, X_{-\alpha_i}]) v_\lambda = 0$ because $v_\lambda$ is a highest-weight vector. Finally, if $\gg^\C_{\beta-\alpha_i} = \{0\}$, then it is obvious that $d\sigma^\lambda([X_\beta, X_{-\alpha_i}]) v_\lambda = 0$.

   Thus, we have that $d\sigma^\lambda(X_{-\alpha_i}) v_\lambda$ is either zero or is a highest-weight vector for $\sigma^\lambda$ of weight $\lambda-\alpha_i\neq \lambda$.  The latter is impossible because $\sigma^\lambda$ is irreducible, and so we have that:
   \[
      d\sigma^\lambda(X_{-\alpha_i}) v_\lambda = 0
   \]
   for all $1\leq i \leq n$ such that $\lambda_i = 0$.  

   Since $d\sigma^\lambda(X_{-\alpha_i}) v_\lambda = 0$ if and only if $\lambda_i = 0$ for all $1\leq i \leq n$, and since  
   \[
      \{ g\in G^\C \mid d\sigma^\lambda(g) \in \C v_\lambda \},
   \]
   is a parabolic subgroup whose Lie algebra must contain $\gg_{-\alpha_i}$ if and only if $\lambda_i = 0$, we arrive at the conclusion.
\end{proof}

\begin{corollary}
   \label{cor:absolute_value_equals_one}
   In the context of Lemma~\ref{lem:stabilizer_of_conical}, 
   \[
      \{g\in G \mid |\Delta^\lambda(g)| =1 \} = L,
   \]
   where $L = Z_\gg(\gt'_S) = P \cap G$ for the parabolic subgroup corresponding to $S= \{\alpha_i \in \Pi \mid \lambda_i = 0\}$, as before.
\end{corollary}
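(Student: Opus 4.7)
The plan is to reduce the corollary almost immediately to Lemma~\ref{lem:stabilizer_of_conical} via the Cauchy--Schwarz inequality. First I would unpack the definition of $\Delta^\lambda$ on elements of the compact group $G$: since the Cartan involution $\theta$ fixes $\gg$ pointwise, it also fixes $G$, so $\overline{g}=g$ for all $g\in G$. Consequently, for $g\in G$,
\[
   \Delta^\lambda(g) = \sigma^\lambda_{v_\lambda,v_\lambda}(g) = \langle v_\lambda, \sigma^\lambda(g) v_\lambda\rangle.
\]

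Next, the unitarity of $\sigma^\lambda$ on $G$ and the fact that $v_\lambda$ is normalized give $\|\sigma^\lambda(g)v_\lambda\|=\|v_\lambda\|=1$, so by Cauchy--Schwarz
\[
   |\Delta^\lambda(g)| \leq \|v_\lambda\|\,\|\sigma^\lambda(g)v_\lambda\| = 1,
\]
with equality if and only if $\sigma^\lambda(g)v_\lambda$ is a scalar multiple of $v_\lambda$, i.e.\ $\sigma^\lambda(g)v_\lambda \in \C v_\lambda$.

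Now I would invoke Lemma~\ref{lem:stabilizer_of_conical}, which identifies $\{g\in G^\C \mid \sigma^\lambda(g)v_\lambda\in\C v_\lambda\}$ with the parabolic subgroup $P$ corresponding to $S=\{\alpha_i\in\Pi \mid \lambda_i=0\}$. Intersecting with $G$ gives
\[
   \{g\in G \mid |\Delta^\lambda(g)|=1\} = P\cap G.
\]
Finally I would recall from the setup in Section~\ref{sec:parabolic_subgroups} that $P=LU$ with $U$ the analytic subgroup of $G^\C$ whose Lie algebra $\gu$ consists of nilpotent elements; since a compact group contains no nontrivial unipotent elements, $U\cap G=\{e\}$, yielding $P\cap G = L$, which finishes the proof.

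There is no real obstacle here: the Cauchy--Schwarz step and the identification $P\cap G=L$ (already essentially recorded in the paper) are both routine, and all the representation-theoretic content has been offloaded to the preceding lemma.
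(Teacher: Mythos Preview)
Your proof is correct and follows essentially the same approach as the paper: both use Cauchy--Schwarz on the unit vectors $v_\lambda$ and $\sigma^\lambda(g)v_\lambda$ to characterize $|\Delta^\lambda(g)|=1$ by $\sigma^\lambda(g)v_\lambda\in\C v_\lambda$, and then invoke Lemma~\ref{lem:stabilizer_of_conical} together with $P\cap G=L$. You simply supply a little more detail (the identity $\overline{g}=g$ on $G$ and the reason $U\cap G=\{e\}$) than the paper's terse version.
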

\begin{proof}
   We note that $||v_\lambda|| = ||\sigma^\lambda(g) v_\lambda|| = 1$, and thus that 
   \[
      |\Delta^\lambda(g)| = |\langle v_\lambda, \sigma^\lambda(g) v_\lambda\rangle | \leq 1
   \]
   for all $g\in G$, with equality holding exactly when $\sigma^\lambda(g) v_\lambda$ is a multiple of $v_\lambda$.  By Lemma~\ref{lem:stabilizer_of_conical}, this holds precisely when $g\in P\cap G = L$.
\end{proof} 

\subsection{Holomorphic Sections of Line Bundles on Partial Flag Manifolds}

Now fix $S\subseteq \Pi$ let $P$ be the corresponding parabolic subgroup of $G^\C$, and let $\Pbar$ be the opposite parabolic group; we put $L = P\cap G = \Pbar \cap G$ as before. Suppose that $\lambda\in \Lambda^+ \cap iZ(\gl)^*$.  From the discussion in Section~\ref{sec:parabolic_subgroups}, it is clear that this happens precisely when $\lambda\in\Span\{\omega_i \mid \alpha_i\notin S\}$. Then the corresponding character $\chi_\lambda\in \widehat{T}$ extends to a central character of $L$ (recall that $T$ is simultaneously a maximal torus for $G$ and for $L$, and thus both groups share the same set $\Lambda$ of analytically integral weights, and every dominant weight for $G$ es dominant for $L$).  This central character can be extended by complexification to a holomorphic central character $\chi_\lambda^\C$ of $L^\C$.  Using the fact that $P$ admits the semidirect product decomposition $\Pbar=L\ltimes \Ubar$ (known as the \textbf{Langlands Decomposition}), $\chi_\lambda^\C$ can be extended to a central character $\chi_\lambda^P$ of $P$.  From this point on we will denote all three of these characters by $\chi_\lambda$ where confusion can be avoided, as before.

We can then construct the line bundle $G\times_L {\C_\lambda}= G^\C\times_\Pbar{\C_\lambda}$.  As before, we identify the $L^2$-sections of this bundle with the space
\[
   L^2_{\chi_\lambda}(G/L) = \{ f\in L^2(G) \mid f(gl) = \chi_\lambda(l^{-1})f(g) \text{ for almost all $g\in G$, $l\in L$} \},
\]
and we consider the induced representation $(\Ind_L^G(\chi_\lambda), L^2_{\chi_\lambda}(G/L))$.  

It is clear that $L^2_{\chi_\lambda}(G/L) \subseteq L^2_{\chi_\lambda}(G/T)$. Furthermore, the orthogonal projection is given by the intertwining 
operator:
\begin{align*}
   L^2_{\chi_\lambda}(G/L) & \rightarrow L^2_{\chi_\lambda}(G/T) \\
              f & \mapsto f^\#, \\
\end{align*}
where 
\[
   f^\#(g) := \int_L f(gl) dl
\]
for all $g\in G$.  

As before, Frobenius Reciprocity implies that 
\[
   L^2_{\chi_\lambda}(G/L) = \bigoplus_{\pi\in\widehat{G}} \Span \{ \pi_{w,v} \mid w,v\in \cH_\pi \text{ and } \pi(l)v = \chi_\lambda(l)v \text{ for all } l\in L \}.
\]
Without much difficulty, one shows that the holomorphic sections of $G\times_{\chi_\lambda} L$ correspond precisely to the holomorphic sections of $G\times_{\chi_\lambda} T$, which we have already identified with 
\[
   \cA^\lambda = \{ \sigma^\lambda_{v,v_\lambda} \mid v\in \cH_\lambda\}
\]
by the Borel-Weil theorem.

We continue working with the same parabolic subgroup $P$, but we now let $\lambda\in\Lambda^+$ be any dominant integral weight, and let $(\sigma, \cH_\lambda)$ be the irreducible representation of $G$ with highest weight $\lambda$.  Then $\sigma|_L$ acts irreducibly on the space $(\cH_\lambda)^U$ of $U$-fixed vectors in $\cH_\lambda$, and in fact this unitary representation of $L$, which we will denote by $\pi$, has highest-weight $\lambda$ (see \cite[Theorem~5.114]{K}).  

\section{Definition and Basic Properties of Toeplitz Operators}
\label{sec:Toeplitz_definition}
Every finite-dimensional Hilbert space of functions is a reproducing kernel Hilbert space.  In fact, the reproducing kernel of $\cA^\lambda$ can be expressed simply in terms of matrix-coefficient functions.  Note first that
\[
   \langle \sigma^\lambda_{w, v_\lambda},  d_{\lambda} \sigma^\lambda_{\sigma^\lambda(x)v_\lambda, v_\lambda} \rangle = \langle w, \sigma^\lambda(x) v_\lambda \rangle = \sigma^\lambda_{w, v_\lambda}(x)
\]
for all $x\in G$, $w\in \cH_\lambda$, where $d_{\lambda} = \dim \cH_\lambda$. 

We put $K^\lambda_{x} = d_{\lambda} \sigma^\lambda_{\sigma^\lambda(x) v_\lambda, v_\lambda}$ for each $x\in G$, and define the \textbf{Bergman Kernel} $K^\lambda :G\times G\rightarrow \C$ by $K^\lambda(x,y) = \langle K^\lambda_y, K^\lambda_x \rangle.$ We note that
\[K^\lambda(x,y) =  d_{\lambda} \sigma^\lambda_{\sigma^\lambda(y) v_\lambda, v_\lambda} (x) = d_{\lambda} \langle v_\lambda, \sigma^\lambda(y^{-1}x) v_\lambda \rangle = d_{\lambda} \Delta^\lambda(y^{-1}x)
\]
for all $x,y\in G$, where we remember that $\Delta^\lambda = \sigma^\lambda_{v_\lambda, v_\lambda} $.  Note that
\[
   K^\lambda(xt_1, yt_2) = \chi_\lambda(t_1t_2^{-1})K^\lambda(x,y)
\]
for all $x,y\in G$ and $t_1,t_2\in T$, so we can also interpret $K^\lambda$ as a positive-definite kernel for sections of the line bundle $G\times_T \C_\lambda$.

We define the \textbf{Bergman projection} to be the orthogonal projection
\[
   P=P^\lambda : L^2_{\chi_\lambda}(G/T) \rightarrow \alam.
\]
Note that since $K^\lambda$ is the reproducing kernel of $\cA^\lambda$, we have that:
\[
   P^\lambda \varphi (x) = \int_G \varphi(y) K^\lambda(x,y) dy = d_{\lambda} \int_G \varphi(y) \Delta^\lambda (y^{-1}x) dx = d_{\lambda}\varphi *  \Delta^\lambda 
\]
for all $\varphi\in\cA^\lambda$ and $x\in G$.

Now let $f\in L^\infty(G/T)$.  We define the multiplication operator
\[
   M_f = M^\lambda_f \in \cB(L^2_{\chi_\lambda}(G/T))
\]
by $M_f \varphi(x) = f(x) \varphi(x)$ for all $\varphi\in L^2_{\chi_\lambda}(G/T)$ and all $x\in G$.  Here, we are using the canonical embeddings of $L^\infty(G/T)$ and $L^2_{\chi_\lambda}(G/T)$ in $L^2(G)$ (this identifies $L^\infty(G/T)$ with the right-$T$-invariant functions in $L^2(G)$).  It is easy to see that $M_f \varphi \in L^2_{\chi_\lambda}(G/T)$ for all $\varphi\in L^2_{\chi_\lambda}(G/T)$ and that $M_f$ is a bounded linear operator satisfying $||M_f||_\op \leq ||f||_\infty$. 

\begin{definition}
The \textbf{Toeplitz operator with symbol $f\in L^\infty(G/T)$} is the operator $T_f^\lambda = P^\lambda M^\lambda_f\in \End(\cA^\lambda)$. When confusion can be avoided, we will write $T_f$ in place of $T_f^\lambda$.
\end{definition}

\begin{observation}
   Suppose that $f\in L^\infty(G/T)$.  Then we see that for all $\varphi, \psi\in \cA^\lambda$,
   \[
      \langle T_f \varphi, \psi \rangle = \langle M_f \varphi, \psi\rangle =  \int_G f(y) \varphi(y) \overline{\psi(y)} dy.
   \]
   Alternatively, for each $\varphi\in\cA^\lambda$, we have that:
   \[
      T_f \varphi (x) = \langle T_f \varphi, K_x \rangle =  d_{\lambda}\int_G f(y) \varphi (y) \overline{\langle \sigma^\lambda(x)v_\lambda, \sigma^\lambda(y) v_\lambda \rangle} dy.
   \]
   Finally, the unitary intertwining operator $\cH_\lambda \rightarrow \cA^\lambda$, $w \mapsto \sigma^\lambda_{w, \sqrt{d_{\lambda}}v_\lambda}$ allows us to transfer the Toeplitz operator $T_f$ to the operator $\widetilde{T_f}$ defined on $\cH_\lambda$ by:
   \[
      \langle \widetilde{T_f} v, w \rangle = d_{\lambda} \langle T_f \sigma^\lambda_{v,v_\lambda}, \sigma^\lambda_{w, v_\lambda} \rangle 
   \]
   for all $v,w\in \cH_\lambda$. When confusion can be avoided, we will abuse notation and write $T_f$ to refer to both $T_f$ and $\widetilde{T_f}$.
\end{observation}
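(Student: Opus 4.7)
The plan is to verify each of the three assertions in turn by unwinding the definitions; none of them presents a genuine obstacle, so the only bookkeeping concern is to keep track of normalizations carefully and to maintain consistency among the various canonical identifications (functions on $G/T$ versus right-$T$-invariant functions on $G$, and the action of $\sigma^\lambda$ on $G$ versus its holomorphic extension to $G^\C$).

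For the first displayed identity, I would begin from the fact that $P^\lambda$ is a self-adjoint orthogonal projection with range $\cA^\lambda$, so that $P^\lambda \psi = \psi$ whenever $\psi \in \cA^\lambda$. Consequently, for any $\varphi,\psi \in \cA^\lambda$ one obtains $\langle T_f \varphi, \psi\rangle = \langle P^\lambda M^\lambda_f \varphi, \psi\rangle = \langle M^\lambda_f \varphi, P^\lambda \psi\rangle = \langle M^\lambda_f \varphi, \psi\rangle$. The remaining equality is just the $L^2(G)$ inner product written out for the multiplication operator, using the embeddings of $L^\infty(G/T)$ and $L^2_{\chi_\lambda}(G/T)$ into $L^2(G)$ recalled at the end of the previous subsection.

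For the ``alternatively'' formula, I would appeal to the reproducing kernel property of the finite-dimensional Hilbert space $\cA^\lambda$: since $T_f \varphi \in \cA^\lambda$, we have $T_f \varphi(x) = \langle T_f \varphi, K^\lambda_x\rangle$. Applying the identity from the previous step with $\psi = K^\lambda_x$, and then substituting $K^\lambda_x = d_\lambda \sigma^\lambda_{\sigma^\lambda(x) v_\lambda, v_\lambda}$, transforms $\overline{K^\lambda_x(y)}$ into $d_\lambda \overline{\langle \sigma^\lambda(x) v_\lambda, \sigma^\lambda(y) v_\lambda\rangle}$, which is precisely the formula claimed.

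Finally, for the transfer to $\cH_\lambda$, I note that the map $w \mapsto \sigma^\lambda_{w,\sqrt{d_\lambda} v_\lambda} = \sqrt{d_\lambda}\,\sigma^\lambda_{w,v_\lambda}$ is the unitary intertwiner $\cH_\lambda \to \cA^\lambda$ already displayed in \eqref{eqn:intertwiningmatrixcoefficient}. By definition, $\widetilde{T_f}$ is the pullback of $T_f$ along this isometry, so $\langle \widetilde{T_f} v, w\rangle = \langle T_f \sigma^\lambda_{v,\sqrt{d_\lambda} v_\lambda}, \sigma^\lambda_{w,\sqrt{d_\lambda} v_\lambda}\rangle$, and pulling the two $\sqrt{d_\lambda}$ normalizations out of the inner product yields the factor $d_\lambda$ in the displayed formula. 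At this point the identity is a tautology, so once the normalizations are stated correctly there is nothing further to verify.
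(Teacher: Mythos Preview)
Your proposal is correct and matches the paper's approach; in fact, the paper presents this as an Observation without proof, treating all three identities as immediate consequences of the definitions of $P^\lambda$, $K^\lambda_x$, and the unitary map in \eqref{eqn:intertwiningmatrixcoefficient}, which is exactly what you have carefully unwound.
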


We remind the reader that if $\cH$ is a finite-dimensional Hilbert space, then there is a standard unitary map $\cH \otimes \overline{\cH} \rightarrow \End(\cH) $, where each simple tensor $v\otimes \overline{w}$ is identified with the rank-one operator on $\cH$ given by $(v\otimes \overline{w}) (u) = \langle u, w \rangle v$ for all $u,v,w\in\cH$.  We further remind the reader of that each unitary representation $(\pi, \cH)$ of $G$ gives rise to the integrated $*$-representation of $L^1(G)$ given by 
\[
   \pi(f) := \int_G f(y) \pi(y) dy.
\]

Using integrated representations, we can write Toeplitz operators in a manner that will be convenient for representation-theoretic arguments:
\begin{lemma}
   \label{lem:toeplitz_as_integrated_rep}
   For all $f\in L^\infty(G/T)$, one has that $T_f = d_{\lambda} (\siot)(f) (v_\lambda \otimes \overline{v_\lambda})$, as an operator on $\cH_\lambda$.
\end{lemma}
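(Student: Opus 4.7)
The plan is to verify both sides of the claimed identity by unpacking them into explicit integrals and comparing matrix coefficients. Recall from the Observation preceding the lemma that $T_f$ on $\cH_\lambda$ really means the transferred operator $\widetilde{T_f}$ defined by $\langle \widetilde{T_f} v, w\rangle = d_\lambda \langle T_f \sigma^\lambda_{v,v_\lambda}, \sigma^\lambda_{w,v_\lambda}\rangle$. The strategy is to compute $\langle \widetilde{T_f}v, w\rangle$ and $d_\lambda \langle (\siot)(f)(v_\lambda \otimes \overline{v_\lambda})\, v, w\rangle$ independently and show that both produce the same integral over $G$.

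For the left-hand side, first I would use that $P^\lambda$ is self-adjoint and $\sigma^\lambda_{w,v_\lambda} \in \cA^\lambda$, so that $\langle T_f \sigma^\lambda_{v,v_\lambda}, \sigma^\lambda_{w,v_\lambda}\rangle = \langle M_f \sigma^\lambda_{v,v_\lambda}, \sigma^\lambda_{w,v_\lambda}\rangle$. Expanding the matrix-coefficient functions as inner products and using that $\overline{\langle w, \sigma^\lambda(y)v_\lambda\rangle} = \langle \sigma^\lambda(y)v_\lambda, w\rangle$, this yields
\[
   \langle \widetilde{T_f} v, w\rangle = d_\lambda \int_G f(y)\, \langle v, \sigma^\lambda(y) v_\lambda\rangle\, \langle \sigma^\lambda(y) v_\lambda, w\rangle\, dy.
\]

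For the right-hand side, I would apply the identification $\cH_\lambda \otimes \overline{\cH_\lambda} \cong \End(\cH_\lambda)$ under which $x \otimes \overline{y}$ acts by $u \mapsto \langle u, y\rangle x$. Since $(\siot)(y) = \sigma^\lambda(y) \otimes \overline{\sigma^\lambda(y)}$ sends $v_\lambda \otimes \overline{v_\lambda}$ to $\sigma^\lambda(y)v_\lambda \otimes \overline{\sigma^\lambda(y)v_\lambda}$, integrating against $f$ and applying the resulting operator to $v$ gives
\[
   (\siot)(f)(v_\lambda \otimes \overline{v_\lambda})\, v = \int_G f(y)\, \langle v, \sigma^\lambda(y)v_\lambda\rangle\, \sigma^\lambda(y) v_\lambda\, dy,
\]
and pairing with $w$ produces exactly the same integral obtained above (multiplied by $d_\lambda$). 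This proves the two operators agree on $\cH_\lambda$.

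There is no real obstacle; the main thing to be careful about is tracking conjugate-linearity in the second argument of the inner product and in the bar on the second tensor factor, so that the pairings match without a stray complex conjugate on $f$. One might also justify exchanging the integral with the inner product, which is immediate since $\cH_\lambda$ is finite-dimensional and $f \in L^\infty(G/T) \subseteq L^1(G)$.
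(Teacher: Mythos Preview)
Your proposal is correct and follows essentially the same argument as the paper: both compute $\langle T_f v, w\rangle$ via the identification $\cA^\lambda\cong\cH_\lambda$, expand the matrix-coefficient functions as integrals, and recognize the integrand as $\langle (\sigma^\lambda(y)v_\lambda\otimes\overline{\sigma^\lambda(y)v_\lambda})v,w\rangle$. The only cosmetic difference is that the paper presents a single chain of equalities rather than computing the two sides separately.
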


\begin{proof}
   Using the identification of $\cA^\lambda$ with $\cH_\lambda$, we have that, for all $v,w\in\cH_\lambda$,  
   \begin{align*}
      \langle T_f v, w \rangle = &\, d_{\lambda} \langle T_f \sigma^\lambda_{v, v_\lambda}, \sigma^\lambda_{w,v_\lambda} \rangle \\
      = & d_{\lambda} \int_G f(y) \langle v, \sigma^\lambda(y) v_\lambda \rangle \overline{\langle w, \sigma^\lambda(y) v_\lambda \rangle} dy \\
      = & d_{\lambda} \int_G f(y) \langle v, \sigma^\lambda(y) v_\lambda \rangle \langle \sigma^\lambda(y) v_\lambda, w \rangle dy \\
      = & d_{\lambda} \int_G f(y) \langle \langle v, \sigma^\lambda(y) v_\lambda \rangle \sigma^\lambda(y) v_\lambda , w \rangle dy \\ 
      = & d_{\lambda} \int_G f(y) \langle (\sigma^\lambda(y) v_\lambda \otimes \overline{\sigma^\lambda(y) v_\lambda}) v, w \rangle dy \\
      = & d_{\lambda} \left\langle \int_G f(y) (\siot)(y)( v_\lambda \otimes \overline{v_\lambda} ) dy\, v, w \right\rangle.
   \end{align*}
   We can therefore conclude that 
   \[
      T_f = d_{\lambda} (\siot)(f) (v_\lambda \otimes \overline{v_\lambda}).
   \]
\end{proof}
\begin{corollary}
   \label{cor:trace_of_Toeplitz}
   For all $f\in L^1(G/T)$, one has that
   \[
      \Tr(T_f) = d_\lambda\int_{G} f(x)\, dx
   \]
\end{corollary}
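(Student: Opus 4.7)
The plan is to reduce the trace computation to a trivial check via Lemma~\ref{lem:toeplitz_as_integrated_rep}. By that lemma, the Toeplitz operator admits the integral representation
\[
T_f = d_\lambda \int_G f(y)\, \bigl( \sigma^\lambda(y) v_\lambda \otimes \overline{\sigma^\lambda(y) v_\lambda} \bigr)\, dy,
\]
where each integrand is a rank-one operator on $\cH_\lambda$ under the standard identification $\cH_\lambda \otimes \overline{\cH_\lambda} \cong \End(\cH_\lambda)$. Since trace is a continuous linear functional on the finite-dimensional space $\End(\cH_\lambda)$, it commutes with the Bochner integral, so
\[
\Tr(T_f) = d_\lambda \int_G f(y)\, \Tr\bigl( \sigma^\lambda(y) v_\lambda \otimes \overline{\sigma^\lambda(y) v_\lambda} \bigr)\, dy.
\]

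Next, I use the elementary fact that under the identification above, a simple tensor $v \otimes \overline{w}$ corresponds to the rank-one operator $u \mapsto \langle u, w\rangle v$, whose trace is $\langle v, w\rangle$. Applied to $v = w = \sigma^\lambda(y) v_\lambda$, unitarity of $\sigma^\lambda$ and the normalization $\|v_\lambda\| = 1$ give
\[
\Tr\bigl( \sigma^\lambda(y) v_\lambda \otimes \overline{\sigma^\lambda(y) v_\lambda} \bigr) = \| \sigma^\lambda(y) v_\lambda \|^2 = \|v_\lambda\|^2 = 1
\]
for every $y \in G$. Substituting yields $\Tr(T_f) = d_\lambda \int_G f(y)\, dy$, as claimed.

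There is essentially no obstacle here; the only small point is that Lemma~\ref{lem:toeplitz_as_integrated_rep} was stated for $f \in L^\infty(G/T)$ while the corollary is stated for $f \in L^1(G/T)$. However, the integrated representation $(\sigma^\lambda \otimes \overline{\sigma^\lambda})(f)$ is defined for any $f \in L^1(G)$, and the computation in the proof of that lemma only uses the absolute integrability of $f$ against the bounded matrix coefficients of $\sigma^\lambda$, so the same formula $T_f = d_\lambda (\sigma^\lambda \otimes \overline{\sigma^\lambda})(f)(v_\lambda \otimes \overline{v_\lambda})$ extends without change to $L^1$ symbols. Once this extension is noted, the above two-line argument finishes the proof.
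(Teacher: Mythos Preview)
Your proof is correct and follows essentially the same approach as the paper: invoke Lemma~\ref{lem:toeplitz_as_integrated_rep}, pass the trace through the integral, and use that $\Tr(\sigma^\lambda(y)v_\lambda \otimes \overline{\sigma^\lambda(y)v_\lambda}) = \|\sigma^\lambda(y)v_\lambda\|^2 = 1$. Your remark about extending the lemma from $L^\infty$ to $L^1$ symbols is a nice addition that the paper leaves implicit.
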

\begin{proof}
   This follows from the fact that 
   \begin{align*}
      \Tr(T^\lambda_f) & = \Tr(d_\lambda (\siot)(f)(\viot)) \\
                       & = d_\lambda \Tr\left(\int_G f(x)\iot{\sigma^\lambda(x)v_\lambda}{\sigma^\lambda(x)v_\lambda} dx \right)\\
                       & = d_\lambda \int_G f(x) \Tr(\iot{\sigma^\lambda(x)v_\lambda}{\sigma^\lambda(x)v_\lambda}) dx\\
                       & = d_\lambda \int_G f(x) dx,
   \end{align*}
   where we use in the last step that $\Tr(\iot{\sigma^\lambda(x)v_\lambda}{\sigma^\lambda(x)v_\lambda}) = ||\sigma^\lambda(x)v_\lambda||^2= 1$.
\end{proof} 

We continue with some further basic properties of Toeplitz operators on flag manifolds:
\begin{lemma}
   \,\begin{enumerate}
      \item For all $f\in L^\infty(G/T)$, one has that $||T_f||_\op\leq ||f||_\infty$. 
      \item The operator 
      \begin{align*}
         T^\lambda: L^\infty(G/T) & \rightarrow \End(\cH_\lambda) \\
         f    & \mapsto T_f
      \end{align*}
      has a unique continuous extension to an operator $T^\lambda: L^1(G/T) \rightarrow \End(\cH_\lambda)$, where $\End(\cH_\lambda)$ is given the trace norm. 
      \item Let $1\leq p \leq\infty$.  For all $f\in L^1(G/T)$, one has that $||T_f||_p \leq d_{\lambda} ||f||_1$, where $||A||_p$ represents the Schatten $p$-norm of $A$ for all $A\in \End(\cH_\lambda)$. 
\end{enumerate}
\end{lemma}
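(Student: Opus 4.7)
The plan is to handle the three parts in order, with parts (2) and (3) both flowing from a single trace-norm estimate built directly from Lemma~\ref{lem:toeplitz_as_integrated_rep}.

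For part (1), the bound is immediate: since $T_f^\lambda = P^\lambda M_f^\lambda$, where $P^\lambda$ is an orthogonal projection (hence of operator norm at most $1$) and $||M_f^\lambda||_{\op}\leq ||f||_\infty$ by the standard estimate on multiplication operators, submultiplicativity of the operator norm gives $||T_f||_{\op}\leq ||f||_\infty$.

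The key step is part (3) for $p=1$, from which both the rest of (3) and all of (2) will follow. Using Lemma~\ref{lem:toeplitz_as_integrated_rep}, one writes
\[
T_f = d_\lambda \int_G f(y)\, \iot{\sigma^\lambda(y) v_\lambda}{\sigma^\lambda(y) v_\lambda}\, dy,
\]
and observes that each integrand is the rank-one operator on $\cH_\lambda$ associated to the unit vector $\sigma^\lambda(y) v_\lambda$, hence has trace norm exactly $1$ (as already noted in the proof of Corollary~\ref{cor:trace_of_Toeplitz}). Since $\End(\cH_\lambda)$ equipped with the trace norm is a (finite-dimensional) Banach space, the triangle inequality for Bochner integrals yields
\[
||T_f||_1 \leq d_\lambda \int_G |f(y)|\, dy = d_\lambda ||f||_1.
\]

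For part (2), this estimate shows that $f\mapsto T_f$ is a bounded linear map from $(L^\infty(G/T), ||\cdot||_1)$ into $(\End(\cH_\lambda), ||\cdot||_1)$ with operator norm at most $d_\lambda$. Since $L^\infty(G/T)$ is dense in $L^1(G/T)$ (because $G$ is compact) and $\End(\cH_\lambda)$ is complete, the map extends uniquely by continuity to all of $L^1(G/T)$. For the remaining cases of part (3), I would invoke the standard comparison of Schatten norms: for any operator $A$ on a Hilbert space and any $1\leq q\leq p\leq \infty$, one has $||A||_p \leq ||A||_q$, since the singular values form a decreasing sequence and the $\ell^p$-norms of a nonnegative sequence are decreasing in $p$. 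Applying this with $q=1$ gives $||T_f||_p \leq ||T_f||_1 \leq d_\lambda ||f||_1$ for every $1\leq p\leq \infty$.

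There is no serious obstacle in this argument; the only point worth emphasizing is that the representation-theoretic formula of Lemma~\ref{lem:toeplitz_as_integrated_rep} already exhibits $T_f$ as an integral of rank-one operators of unit trace norm, which is a strictly sharper starting point than the operator-norm bound in (1) and immediately produces the trace-norm estimate that drives the rest of the lemma.
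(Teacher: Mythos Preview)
Your proof is correct and follows essentially the same approach as the paper: both use the formula $T_f = d_\lambda \int_G f(y)\, \sigma^\lambda(y)v_\lambda \otimes \overline{\sigma^\lambda(y)v_\lambda}\, dy$ from Lemma~\ref{lem:toeplitz_as_integrated_rep} together with the fact that the integrand is a rank-one operator of unit Schatten norm, and then extend by density to $L^1(G/T)$. The only cosmetic difference is that the paper bounds all Schatten $p$-norms simultaneously (since a rank-one projection has $p$-norm $1$ for every $p$), while you first establish the case $p=1$ and then invoke $\|A\|_p \le \|A\|_1$; both routes are equally valid.
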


\begin{proof}
   We note that (1) follows immediately from the fact that
   \[
      ||T_f||_\op = ||PM_f||_\op \leq ||M_f||_\op = ||f||_\infty
   \]
   for all $f\in L^\infty(G/T)$.
   
   To prove (2) and (3), we note that if $f\in C(G/T)$, then we can consider the Banach space $\End(\cH_\lambda)$ equipped with the Schatten $p$-norm, where $1\leq p < \infty$.  We immediately have that
   \begin{align*}
      ||T_f||_p & \leq d_{\lambda} \int_G ||f(x) (\siot)(y)(v_\lambda\otimes\overline{v_\lambda})||_p dy \\ 
                & = d_{\lambda} \int_G |f(x)| \, ||\sigma^\lambda(y) v_\lambda \otimes \overline{\sigma^\lambda(y)v_\lambda} ||_p dy\\ 
                & = d_{\lambda} ||v_\lambda||^2 ||f||_1 \\
                & = d_{\lambda} ||f||_1
   \end{align*}
   It follows that the map $T^\lambda: C(G/T) \rightarrow \End(\cH)$ satisfies $||T_f||_1 \leq d_{\lambda} ||f||_1$ and is thus continuous in the relative topology of $C(G/T)$ inside $L^1(G/T)$.  Because $C(G/T)$ is dense in $L^1(G/T)$, we immediately have (2), as well as (3) for the case $p=1$.  
\end{proof} 

\begin{observation}
   In particular, $||T_f||_\op \leq d_{\lambda} ||f||_1$ for all $f\in L^1(G/T)$. Also, $||T_f||_p \leq d_{\lambda} ||f||_p$ for all $1\leq p\leq \infty$ and all $f\in L^p(G/T)$, since $||f||_p\leq ||f||_1$.
\end{observation}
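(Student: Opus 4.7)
The plan is to deduce both parts directly from part~(3) of the preceding lemma, combined with two standard facts. The first is that on the finite-dimensional Hilbert space $\cH_\lambda$ the Schatten $p$-norms are nonincreasing in $p$ (they are simply the $\ell^p$-norms of the finite sequence of singular values), so $||A||_\infty \leq ||A||_p \leq ||A||_1$ for every $A\in \End(\cH_\lambda)$ and every $1\leq p\leq \infty$. The second is that normalized Haar measure on $G$ (and hence on $G/T$) is a probability measure, so Jensen's inequality applied to the convex function $t\mapsto t^p$ gives $||f||_1 \leq ||f||_p$ for every $1 \leq p \leq \infty$ and every $f\in L^p(G/T)$. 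I note that this is the reverse of the inequality written parenthetically in the observation, but it is the direction that is actually needed here.

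For the operator-norm estimate, I would specialize to $p=\infty$, so that the Schatten $\infty$-norm coincides with the operator norm. Monotonicity of Schatten norms together with part~(3) of the lemma then yields
\[
   ||T_f||_\op \;=\; ||T_f||_\infty \;\leq\; ||T_f||_1 \;\leq\; d_\lambda ||f||_1
\]
for every $f\in L^1(G/T)$. For the Schatten $p$-norm estimate, I would simply chain all three facts together:
\[
   ||T_f||_p \;\leq\; ||T_f||_1 \;\leq\; d_\lambda ||f||_1 \;\leq\; d_\lambda ||f||_p,
\]
valid for every $1\leq p \leq \infty$ and every $f\in L^p(G/T)$.

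No genuine obstacle is anticipated; this observation is essentially a packaging statement. The only point that merits care is the direction of the $L^p$-norm comparison, which on a general measure space could go either way but on the probability space $G/T$ goes the way indicated above.
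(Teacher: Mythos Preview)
Your argument is correct and is essentially the approach the paper intends: both parts are immediate from part~(3) of the preceding lemma together with elementary norm comparisons. Two minor remarks. First, the operator-norm bound is already the $p=\infty$ case of part~(3) as stated, so you do not even need Schatten monotonicity there (though your chain $||T_f||_\infty \leq ||T_f||_1 \leq d_\lambda ||f||_1$ is of course also valid). Second, you are right that the parenthetical ``since $||f||_p \leq ||f||_1$'' in the observation has the inequality reversed; on the probability space $G/T$ one has $||f||_1 \leq ||f||_p$, and that is exactly the direction needed to pass from $||T_f||_p \leq d_\lambda ||f||_1$ to $||T_f||_p \leq d_\lambda ||f||_p$.
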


The following easy lemma is analogous to similar results for Toeplitz operators defined on classical Bergman spaces, and among other things motivates the terminology ``Toeplitz Quantization'': real-valued functions are assigned, via a linear map, to self-adjoint operators, and non-negative functions are assigned to non-negative operators, as one should expect from a quantization procedure. 
\begin{lemma}
   \label{lem:toeplitz_basic_properties}
   \,
   If $f\in L^1(G/T)$, one has that $(T_f)^* = T_{\overline{f}}$ Furthermore, if $f\geq 0$, then $T_f \geq 0$.  
\end{lemma}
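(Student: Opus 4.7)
The plan is to reduce both claims to the integral representation $T_f = d_{\lambda}(\sigma^\lambda\otimes\overline{\sigma^\lambda})(f)(v_\lambda\otimes \overline{v_\lambda})$ given by Lemma~\ref{lem:toeplitz_as_integrated_rep}, which, by unpacking the definition of the integrated representation, can be written as the Bochner integral
\[
   T_f = d_{\lambda}\int_G f(y)\,\bigl(\sigma^\lambda(y) v_\lambda \otimes \overline{\sigma^\lambda(y) v_\lambda}\bigr)\, dy
\]
in $\End(\cH_\lambda)$ with the trace norm. The key observation is that for each $y\in G$ the operator $\sigma^\lambda(y)v_\lambda \otimes \overline{\sigma^\lambda(y)v_\lambda}$ is a self-adjoint, positive rank-one projection onto the line through the unit vector $\sigma^\lambda(y)v_\lambda$.

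For the first statement, I would take the adjoint inside the integral. Since taking adjoints of trace-class operators is isometric on the trace norm and $\End(\cH_\lambda)$ is finite-dimensional, it commutes with the Bochner integral; using that $(v\otimes\overline{w})^* = w\otimes\overline{v}$ and hence $(\sigma^\lambda(y)v_\lambda\otimes\overline{\sigma^\lambda(y)v_\lambda})^* = \sigma^\lambda(y)v_\lambda\otimes\overline{\sigma^\lambda(y)v_\lambda}$, we get
\[
   (T_f)^* = d_{\lambda}\int_G \overline{f(y)}\,\bigl(\sigma^\lambda(y)v_\lambda\otimes\overline{\sigma^\lambda(y)v_\lambda}\bigr)\, dy = T_{\overline{f}}.
\]
One should note that the identity $T_f = d_\lambda(\sigma^\lambda\otimes\overline{\sigma^\lambda})(f)(v_\lambda\otimes\overline{v_\lambda})$, proved in Lemma~\ref{lem:toeplitz_as_integrated_rep} for $f\in L^\infty(G/T)$, extends to $f\in L^1(G/T)$ by continuity of $T^\lambda:L^1(G/T)\to \End(\cH_\lambda)$ (trace norm) together with continuity of the right-hand side in $f\in L^1(G/T)$; alternatively, one can simply repeat the computation in the proof of that lemma verbatim for $f\in L^1(G/T)$, since all of the integrals involved are absolutely convergent.

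For the second statement, suppose $f\geq 0$ almost everywhere. For any $w\in\cH_\lambda$, the inner-product pairing against $w\otimes\overline{w}$ (or, equivalently, Fubini applied to the matrix element $\langle T_f w, w\rangle$) gives
\[
   \langle T_f w, w\rangle = d_{\lambda}\int_G f(y)\,|\langle w, \sigma^\lambda(y)v_\lambda\rangle|^2\, dy \geq 0,
\]
since the integrand is pointwise nonnegative. Hence $T_f\geq 0$.

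There is no real obstacle here; the only subtlety is verifying that the integral formula of Lemma~\ref{lem:toeplitz_as_integrated_rep} remains valid when $f$ is merely in $L^1(G/T)$, which follows either from density of $L^\infty(G/T)$ in $L^1(G/T)$ together with the already-established trace-norm continuity of $T^\lambda$, or by directly inspecting absolute convergence in the computation of that lemma.
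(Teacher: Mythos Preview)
Your proof is correct, but it takes a different route from the paper's. The paper argues directly from the definition $T_f = P M_f|_{\cA^\lambda}$: for $f\in L^\infty(G/T)$ and $\varphi,\psi\in\cA^\lambda$, one has
\[
   \langle P M_f \varphi, \psi\rangle = \langle M_f \varphi, \psi\rangle = \langle \varphi, M_{\overline f}\psi\rangle = \langle \varphi, P M_{\overline f}\psi\rangle,
\]
using only $M_f^* = M_{\overline f}$, and then extends to $L^1(G/T)$ by density. The positivity assertion is not written out in the paper's proof; presumably it is meant to follow in the same spirit, since for $f\in L^\infty(G/T)$ with $f\geq 0$ one has $\langle T_f\varphi,\varphi\rangle = \langle M_f\varphi,\varphi\rangle = \int_G f|\varphi|^2 \geq 0$, and one then passes to $L^1$ by density.

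By contrast, you invoke the integrated-representation formula of Lemma~\ref{lem:toeplitz_as_integrated_rep} and manipulate the Bochner integral of rank-one projections. This is slightly heavier machinery, but it has two minor advantages: it treats both claims uniformly, and it gives the positivity statement explicitly via $\langle T_f w, w\rangle = d_\lambda \int_G f(y)\,|\langle w,\sigma^\lambda(y)v_\lambda\rangle|^2\,dy$, which the paper leaves to the reader. Your remark about extending Lemma~\ref{lem:toeplitz_as_integrated_rep} to $L^1$ symbols is well taken and correctly handled.
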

\begin{proof}
   Note that if $\varphi,\psi\in \alam$ and $f\in L^\infty(G/T)$, then 
   \begin{align*}
      \langle P M_f \varphi,\psi \rangle & = \langle M_f \varphi, \psi \rangle \\
               & = \langle \varphi, M_{\overline{f}} \psi \rangle \\
               & = \langle \varphi, P M_{\overline{f}}\psi \rangle,
   \end{align*}
   where we are using $M^*_f =M_{\overline{f}}$. The fact that (1) holds for all $f\in L^1(G/T)$ then follows by the density of $L^\infty(G/T)$ in $L^1(G/T)$. 
\end{proof} 

%

Our next goal is to show that every operator in $\cB(\cH_\lambda)$ is a Toeplitz operator with some symbol in $L^1(G/T)$, though this symbols is not unique.  This is in stark contrast to the case of Toeplitz operators on weighted Bergman spaces of a complex bounded symmetric domain $D$, in which one has that the map $T^\lambda: L^\infty(D)\rightarrow \cB(\cA^\lambda), f\mapsto T^\lambda_f$ is \textit{injective} but not \textit{surjective} for all $\lambda >-1$ (one still has, in that case, that the Toeplitz quantization map $T^\lambda$ has an image that is dense in the strong operator topology, though not in the operator norm topology).  

We begin with the following lemma:

\begin{lemma}
   \label{lem:toeplitz_averaged_symbol}

   If $L\subseteq G$ is the subgroup given by Corollary~\ref{cor:absolute_value_equals_one} for the highest weight $\lambda\in\Lambda^+$, then:
   \[
      d_{\lambda} (\siot) (f) (v_\lambda \otimes \overline{v_\lambda}) = T_{f^\#_L},
   \]
   where $f^\#_L \in L^1(G/L)$ is given by:
   \[
      f^\#_L(x) = \int_L f(xl) dl.
   \]
   In particular, since one always has that $T\subseteq L$, we see that
   \[
      d_{\lambda} (\siot) (f) (v_\lambda \otimes \overline{v_\lambda}) = T_{f^\#}
   \]
   for all $f\in L^1(G)$, where we denote by $f^\# \in L^1(G/T)$ the function defined by
   \[
      f^\#(x):= \int_T f(xt) dt.
   \]

\end{lemma}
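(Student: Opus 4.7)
The plan is to reduce this to Lemma~\ref{lem:toeplitz_as_integrated_rep} by showing that the rank-one operator $v_\lambda \otimes \overline{v_\lambda}$ is invariant under the tensor-product representation $\sigma^\lambda \otimes \overline{\sigma^\lambda}$ restricted to $L$, so that the integrated operator only detects the right-$L$-averaged symbol.

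First, I would establish the key observation that $(\sigma^\lambda\otimes\overline{\sigma^\lambda})(l)(v_\lambda\otimes\overline{v_\lambda}) = v_\lambda\otimes\overline{v_\lambda}$ for all $l\in L$. By Lemma~\ref{lem:stabilizer_of_conical} (applied at the group-theoretic level) together with Corollary~\ref{cor:absolute_value_equals_one}, for every $l\in L=P\cap G$ we have $\sigma^\lambda(l)v_\lambda\in\C v_\lambda$; since $\sigma^\lambda(l)$ is unitary and $v_\lambda$ is a unit vector, we may write $\sigma^\lambda(l)v_\lambda=\chi(l)v_\lambda$ with $|\chi(l)|=1$. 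Then
\[
(\sigma^\lambda\otimes\overline{\sigma^\lambda})(l)(v_\lambda\otimes\overline{v_\lambda})
= \chi(l)v_\lambda\otimes\overline{\chi(l)v_\lambda}
= |\chi(l)|^2\,v_\lambda\otimes\overline{v_\lambda}
= v_\lambda\otimes\overline{v_\lambda}.
\]

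Second, I would use the standard disintegration of Haar measure on the compact group $G$ along the fibration $G\to G/L$, namely $\int_G \varphi(y)\,dy = \int_{G/L}\int_L \varphi(xl)\,dl\,d(xL)$, to compute
\[
(\sigma^\lambda\otimes\overline{\sigma^\lambda})(f)(v_\lambda\otimes\overline{v_\lambda})
= \int_{G/L}\int_L f(xl)\,(\sigma^\lambda\otimes\overline{\sigma^\lambda})(x)(\sigma^\lambda\otimes\overline{\sigma^\lambda})(l)(v_\lambda\otimes\overline{v_\lambda})\,dl\,d(xL).
\]
By the first step, the $l$-dependence collapses onto $f(xl)$, yielding
\[
(\sigma^\lambda\otimes\overline{\sigma^\lambda})(f)(v_\lambda\otimes\overline{v_\lambda})
= \int_{G/L} f^\#_L(x)\,(\sigma^\lambda\otimes\overline{\sigma^\lambda})(x)(v_\lambda\otimes\overline{v_\lambda})\,d(xL).
\]
Since $f^\#_L$ is right-$L$-invariant, reintegrating over $G$ (and using the $L$-fixed property of $v_\lambda\otimes\overline{v_\lambda}$ once more) gives $(\sigma^\lambda\otimes\overline{\sigma^\lambda})(f)(v_\lambda\otimes\overline{v_\lambda}) = (\sigma^\lambda\otimes\overline{\sigma^\lambda})(f^\#_L)(v_\lambda\otimes\overline{v_\lambda})$.

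Third, since $T\subseteq L$, the averaged function $f^\#_L$ defines an element of $L^1(G/T)$, so Lemma~\ref{lem:toeplitz_as_integrated_rep} (extended continuously from $L^\infty$ to $L^1$ as in part~(2) of the preceding lemma) applies to give $d_\lambda(\sigma^\lambda\otimes\overline{\sigma^\lambda})(f^\#_L)(v_\lambda\otimes\overline{v_\lambda}) = T_{f^\#_L}$, which is exactly the desired identity. The ``in particular'' statement is the special case $S=\varnothing$, where $P=B$ and $L=T$. The only real subtlety in the proof is the $L$-invariance of the tensor $v_\lambda\otimes\overline{v_\lambda}$ — and this is precisely where the unitarity of $\sigma^\lambda$ on the compact group $G$ (and the resulting cancellation $|\chi(l)|^2=1$) is crucial, since on the complex group $L^\C$ the character $\chi_\lambda^\C$ need not have modulus one.
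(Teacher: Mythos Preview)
Your proof is correct and follows essentially the same approach as the paper's: both establish the $L$-invariance of $v_\lambda\otimes\overline{v_\lambda}$ via $\sigma^\lambda(l)v_\lambda=\chi_\lambda(l)v_\lambda$ and $|\chi_\lambda(l)|=1$, and then use Fubini plus translation-invariance of Haar measure (your disintegration argument is an equivalent packaging) to reduce $(\siot)(f)(\viot)$ to $(\siot)(f^\#_L)(\viot)$, after which Lemma~\ref{lem:toeplitz_as_integrated_rep} finishes the job. One small correction: the ``in particular'' is not literally the case $S=\varnothing$ (the subgroup $L$ is fixed once $\lambda$ is fixed), but rather the observation that the identical argument runs with $T$ in place of $L$ since $v_\lambda$ is already a $T$-weight vector --- which is exactly what your first step provides.
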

\begin{proof}
   First, we note that the vector $v_\lambda \otimes \overline{v_\lambda}$ is $L$ invariant since $\sigma^\lambda(l)v_\lambda \otimes \overline{\sigma^\lambda(l) v_\lambda} = \chi_\lambda(l)\overline{\chi_\lambda(l)} v_\lambda \otimes \overline{v_\lambda} = v_\lambda \otimes \overline{v_\lambda}$ for all $l\in L$. 

   Thus
   \begin{align*}
      T_{f^\#_L} = &\, d_{\lambda} \int_G \left(\int_L f(yl) dl\right)\, \sigma^\lambda(y) v_\lambda \otimes \overline{\sigma^\lambda(y)v_\lambda} dy\\
      = & \, d_{\lambda} \int_L \int_G f (yl) \sigma^\lambda(yl) v_\lambda  \otimes \overline{\sigma^\lambda(yl) v_\lambda} dy dl\\
      = & \, d_{\lambda}  \int_G f(y) \sigma^\lambda(y) v_\lambda \otimes \overline{\sigma^\lambda(y) v_\lambda} dy \\
      = & \, d_{\lambda} (\siot)(f) (v_\lambda \otimes \overline{v_\lambda}).
   \end{align*}
\end{proof} 
\begin{observation}
   As a consequence of the preceding lemma, we will usually only consider symbols on $G/L$ rather than on $G/T$ in what follows, since one has that 
   \[
      T_f = T_{f^\#_L}.
   \]
   for all $f\in L^1(G/T)$.
\end{observation}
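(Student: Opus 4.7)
The plan is to chain together the two identities established just above. By Lemma~\ref{lem:toeplitz_as_integrated_rep}, for every $f\in L^\infty(G/T)$ one has
$$T_f = d_\lambda (\siot)(f)(\viot),$$
and Lemma~\ref{lem:toeplitz_averaged_symbol} identifies this right-hand side with $T_{f^\#_L}$. Combining the two gives $T_f = T_{f^\#_L}$ for every $f\in L^\infty(G/T)$, which is the desired equality on a dense subspace of $L^1(G/T)$.

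The next step is to promote this equality from $L^\infty(G/T)$ to $L^1(G/T)$ by a density argument. Both sides depend continuously on $f\in L^1(G/T)$ with respect to the trace norm on $\End(\cH_\lambda)$: continuity of $f\mapsto T_f$ is the bound $\|T_f\|_1\leq d_\lambda\|f\|_1$ established in the preceding norm lemma, while continuity of $f\mapsto T_{f^\#_L}$ follows from the same bound together with the trivial estimate $\|f^\#_L\|_1\leq \|f\|_1$ (a consequence of Fubini and left-invariance of Haar measure on $G$). Since $L^\infty(G/T)$ is dense in $L^1(G/T)$ and the two continuous maps agree on this dense subset, they agree on all of $L^1(G/T)$.

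I do not anticipate any substantial obstacle. The observation is essentially a tautological consequence of the two preceding lemmas plus a routine density extension. Alternatively, one can revisit the proof of Lemma~\ref{lem:toeplitz_averaged_symbol} and note that its computation, which only uses the integrated-representation description of $T_f$ and the $L$-invariance of $\viot$, applies verbatim to $f\in L^1(G/T)$ provided one interprets the representation-theoretic identity in Lemma~\ref{lem:toeplitz_as_integrated_rep} as extended to $L^1$ by continuity; in this way the density step becomes invisible.
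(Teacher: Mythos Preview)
Your proposal is correct and follows essentially the same route as the paper: the observation is simply the combination of Lemma~\ref{lem:toeplitz_as_integrated_rep} with Lemma~\ref{lem:toeplitz_averaged_symbol}, and the paper offers no separate proof beyond calling it a consequence of the preceding lemma. Your explicit density argument from $L^\infty(G/T)$ to $L^1(G/T)$ is a bit more careful than the paper, which has already absorbed this extension into its earlier continuity lemma (the Schatten-norm bound $\|T_f\|_1\le d_\lambda\|f\|_1$) and therefore treats the $L^1$ version of the integrated-representation identity as already established; your alternative remark at the end captures exactly this viewpoint.
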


Next, we need the following useful result:
 \begin{theorem}[\cite{FH,T}]
    Let $(\pi,\cH)$ and $(\rho,\cK)$ be irreducible unitary representations of a compact Lie group $G$.  Furthermore, let $v\in\cH$ be a highest-weight vector for $\pi$ and let $w\in\cK$ be a lowest-weight vector $\rho$.  Then $v\otimes w$ is a cyclic vector for $\pi\otimes\rho$ as a representation of $G$.
 \end{theorem}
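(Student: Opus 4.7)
The plan is to show that the cyclic $U(\gg^\C)$-submodule $V$ generated by $v \otimes w$ inside $\cH \otimes \cK$ equals the whole space, by exploiting the vanishing of one term in the Leibniz rule on each side.

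First I would establish that $\cH \otimes w \subseteq V$. For any $X \in \gnbar$, since $w$ is a lowest-weight vector we have $d\rho(X)w = 0$, and therefore
\[
   (d\pi \otimes d\rho)(X)(v \otimes w) = d\pi(X)v \otimes w + v \otimes d\rho(X)w = d\pi(X)v \otimes w.
\]
By induction on the length of a monomial in $U(\gnbar)$, this gives $Xv \otimes w \in V$ for every $X \in U(\gnbar)$. Since $v$ is a highest-weight vector and $\pi$ is irreducible, $U(\gnbar)v = \cH$, so indeed $\cH \otimes w \subseteq V$.

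Next I would propagate to all of $\cH \otimes \cK$ by a symmetric induction on $U(\gn)$. I claim that for every $k \geq 0$, every $u \in \cH$, and every $Y_1,\dots,Y_k \in \gn$, one has $u \otimes Y_1\cdots Y_k w \in V$. The base case $k=0$ is exactly the previous step. For the inductive step, fix $u \in \cH$ and apply $Y_1$ to the element $u \otimes Y_2\cdots Y_k w$, which lies in $V$ by the hypothesis:
\[
   Y_1 \cdot (u \otimes Y_2\cdots Y_k w) = Y_1 u \otimes Y_2\cdots Y_k w + u \otimes Y_1 Y_2\cdots Y_k w.
\]
The left-hand side is in $V$ since $V$ is a submodule, and the first term on the right is in $V$ by applying the inductive hypothesis with $u$ replaced by $Y_1 u \in \cH$. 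Therefore $u \otimes Y_1 Y_2 \cdots Y_k w \in V$, closing the induction. Since $w$ is a lowest-weight vector of the irreducible $\rho$, $U(\gn)w = \cK$, and so $u \otimes \cK \subseteq V$ for every $u \in \cH$, yielding $V = \cH \otimes \cK$.

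There is no serious obstacle here: the whole argument hinges on the observation that because $v$ is killed by $\gn$ and $w$ is killed by $\gnbar$, the Leibniz rule collapses to a single term on each side, letting us act freely on one tensor factor at a time. The only place that requires a little care is organizing the induction in the second step so that the recursion proceeds without getting stuck — specifically, one must induct on the length of the monomial in $U(\gn)$ \emph{uniformly in $u \in \cH$}, so that the two $V$-membership facts needed at each stage (for $u$ and for $Y_1 u$) are already available.
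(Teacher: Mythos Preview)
Your argument is correct. The key observation---that the Leibniz rule collapses to a single term when acting with $\gnbar$ (resp.\ $\gn$) because $w$ (resp.\ $v$) is annihilated---is exactly what makes the two-stage induction work, and you have organized the second induction correctly by quantifying uniformly over $u \in \cH$.

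Note, however, that the paper does not give its own proof of this statement: it is quoted from the references \cite{FH,T} and used as a black box. Your proof is in fact essentially the argument of Tatsuma \cite{T}, who proceeds in the same two steps (first $\cH \otimes w \subseteq V$ using that $\gnbar$ kills $w$, then all of $\cH \otimes \cK$ using that $U(\gn)w = \cK$). So there is nothing to compare against in the paper itself, but your proposal matches the standard short proof in the cited literature.
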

This tells us, in particular, that $\viot$ is a cyclic vector for the representation $(\siot, \hpiot)$, where, as before, $v_\lambda$ is a highest-weight vector for $(\sigma^\lambda, \cH^\lambda)$.

We are now ready to prove the promised result on the surjectivity of the Toeplitz quantization operator:
\begin{proposition}
   \label{prop:all_operators_are_Toeplitz}
   The linear operator
   \begin{align*}
      T^\lambda: C^\infty(G/L) & \rightarrow \End(\cH_\lambda) \\
            f & \mapsto T^\lambda_f
\end{align*}
   is surjective. 
\end{proposition}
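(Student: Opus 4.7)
The plan is to combine Lemma~\ref{lem:toeplitz_as_integrated_rep}, Lemma~\ref{lem:toeplitz_averaged_symbol}, and the cited cyclicity theorem. By Lemma~\ref{lem:toeplitz_as_integrated_rep}, the image of the map $f\mapsto T^\lambda_f$ on $C^\infty(G)$ coincides (up to the scalar $d_\lambda$ and the standard identification $\hpiot \cong \End(\cH_\lambda)$) with the set
\[
   \{(\siot)(f)(\viot) \mid f\in C^\infty(G)\}\subseteq \hpiot.
\]
So our task reduces to showing that the linear map
\[
   \Phi:C^\infty(G)\to \hpiot,\qquad f\mapsto (\siot)(f)(\viot),
\]
is surjective, and then to control the symbol using the averaging trick of Lemma~\ref{lem:toeplitz_averaged_symbol}.

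For surjectivity of $\Phi$, I would argue as follows. The vector $v_\lambda$ is a highest-weight vector for $\sigma^\lambda$, and $\overline{v_\lambda}\in\overline{\cH_\lambda}$ is (in the natural identification) a lowest-weight vector for the dual/conjugate representation $\overline{\sigma^\lambda}$ with respect to the same positive system. The cited result of Frahm–Helgason/Todorov therefore applies and yields that $\viot$ is a cyclic vector for $(\siot,\hpiot)$; equivalently,
\[
   \Span\{(\siot)(g)(\viot)\mid g\in G\}=\hpiot.
\]
Choose $g_1,\dots,g_N\in G$ so that $\{(\siot)(g_i)(\viot)\}_{i=1}^N$ is a basis. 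For each $g_i$, pick a sequence $\{f_{i,k}\}_{k\in\N}\subset C^\infty(G)$ that is a smooth approximate identity concentrated at $g_i$ (e.g., nonnegative smooth bumps of unit integral shrinking to $\{g_i\}$). Then $\Phi(f_{i,k})\to (\siot)(g_i)(\viot)$ in $\hpiot$ as $k\to\infty$. Because the image of $\Phi$ is a linear subspace of the finite-dimensional space $\hpiot$, it is closed, so it contains each $(\siot)(g_i)(\viot)$, and hence all of $\hpiot$.

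Finally, given $A\in\End(\cH_\lambda)$, choose $f\in C^\infty(G)$ with $\Phi(f)=A/d_\lambda$, so that $T^\lambda_f=A$ by Lemma~\ref{lem:toeplitz_as_integrated_rep}. Applying Lemma~\ref{lem:toeplitz_averaged_symbol}, we have $T^\lambda_f=T^\lambda_{f^\#_L}$, and $f^\#_L\in C^\infty(G/L)$ since averaging a smooth function over the compact subgroup $L$ produces a smooth right-$L$-invariant function. This exhibits $A$ as $T^\lambda_{f^\#_L}$ with $f^\#_L\in C^\infty(G/L)$, proving surjectivity. The only substantive step is the cyclicity input; everything else is routine approximation and the averaging identity already established.
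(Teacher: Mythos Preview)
Your proof is correct and follows essentially the same strategy as the paper's: both exploit that the image of $T^\lambda$ is a linear subspace of the finite-dimensional space $\hpiot$ (hence closed), use smooth approximate identities to place the cyclic vector $\viot$ (or its $G$-translates) in the image, invoke the cited cyclicity theorem of \cite{FH,T}, and then apply Lemma~\ref{lem:toeplitz_averaged_symbol} to replace a symbol in $C^\infty(G)$ by its $L$-average in $C^\infty(G/L)$. The one difference is that the paper first lands only $\viot$ in the image and then appeals to the $G$-equivariance $T_{L(x)\varphi}=\sigma^\lambda(x)T_\varphi\sigma^\lambda(x)^{-1}$ (a forward reference to Lemma~\ref{lem:toeplitz_invariant_symbol}) to generate the whole space, whereas you approximate directly at finitely many points $g_1,\dots,g_N$ whose orbit vectors span---this is slightly cleaner and avoids the forward reference, at the cost of a minor notational slip (writing $T^\lambda_f$ for $f\in C^\infty(G)$ before averaging, when strictly speaking one should go through Lemma~\ref{lem:toeplitz_averaged_symbol} first).
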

\begin{proof}
   Let $\{\widetilde{\varphi}_n\}_{n\in\N} \subseteq C^\infty(G)$ be an approximate identity for $L^1(G)$ with $\widetilde{\varphi}_n \geq 0$ and $\int_G \widetilde{\varphi}_n(y) dy = 1$ for all $n\in\N$.  Then define $\varphi_n \in C^\infty(G/L)$ by $\varphi_n(x) = \int_L \widetilde{\varphi}_n (xt) dt$ for all $x\in G$.  
   Since $\{\widetilde{\varphi}_n\}_{n\in\N}$ is an approximate identity for $G$, we have that:
   \[
      (\siot)(\widetilde{\varphi}_n) (\viot) \stackrel{n\rightarrow\infty}{\longrightarrow} \viot.
   \]
   But by Lemma~\ref{lem:toeplitz_averaged_symbol}, we have that $T_{\varphi_n} = d_{\lambda} (\siot)(\widetilde{\varphi}_n) (\viot)$ for all $n\in\N$.   
   In particular, $T_{\varphi_n} \rightarrow d_{\lambda} (\viot)$ as $n\rightarrow\infty$. 

   Since $\End(\cH_\lambda)$ is a finite-dimensional vector space, there must exist $\varphi\in C^\infty(G)$ such that $T_\varphi = d_{\lambda} (\siot)$.  In the next section (Lemma~\ref{lem:toeplitz_invariant_symbol}), we will show that $T_{L(x)\varphi} = \sigma^\lambda(x)T_\varphi\sigma^\lambda(x)^{-1}$ for all $\varphi\in C^\infty(G/L)$, $x\in G$.  Therefore, we know that $\{ T_f \mid f\in C^\infty(G/L)\}$ is a $G$-invariant subspace of $\End(\cH_\lambda) = \cH_\lambda \otimes \overline{\cH_\lambda}$.  Since $\viot$ is a cyclic vector for $\siot$, the result follows immediately.
\end{proof} 

\section{Invariant Symbols and Commutative Toeplitz Algebras}
 \label{sec:Toeplitz_commuting}
In this section, we study the question of how to find large families of commuting Toeplitz operators.  As in \cite{DOQ}, we will see that the key is to consider Toeplitz operators with symbols invariant under a subgroup $H$ of $G$ such that the representation $\sigma^\lambda|_H$ is multiplicity free; this essentially works because of Schur's lemma, combined with some averaging arguments.

As before, we let $L$ be the subgroup of $G$ given by \ref{cor:absolute_value_equals_one} for the highest weight $\lambda$. To begin, we note that the Toeplitz quantization map $T^\lambda:L^1(G/L) \rightarrow \End(\cH_\lambda)$ is a $G$-intertwining operator from the quasi-regular representation on $L^1(G/L)$ to the representation $\siot$ on $\End(\cH_\lambda) \cong \hpiot$:

\begin{lemma}
   \label{lem:toeplitz_invariant_symbol}
   If $f\in L^1(G/L)$ and $x\in G$, we denote by $L(x)f : G/L \rightarrow \C$ the function defined by $L(x)f(y) = f(x^{-1} \cdot y)$.  Then $T_{L(x)f} = \sigma^\lambda(x) T_f \sigma^\lambda(x)^{-1}$.
\end{lemma}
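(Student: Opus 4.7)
The plan is to use the compact formula for Toeplitz operators provided by Lemma~\ref{lem:toeplitz_as_integrated_rep}, together with a change-of-variables in the Haar integral and the equivariance of the tensor-product identification $\cH_\lambda \otimes \overline{\cH_\lambda} \cong \End(\cH_\lambda)$.

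First I would write out
\[
   T_{L(x)f} = d_\lambda \int_G f(x^{-1}y)\, \sigma^\lambda(y)v_\lambda \otimes \overline{\sigma^\lambda(y)v_\lambda}\, dy
\]
and apply the substitution $y=xz$, using left-invariance of the Haar measure on the compact group $G$. This rewrites the integrand in terms of $\sigma^\lambda(xz)v_\lambda \otimes \overline{\sigma^\lambda(xz)v_\lambda}$.

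The key algebraic step is then to verify the operator-level identity
\[
   \sigma^\lambda(x)v \otimes \overline{\sigma^\lambda(x)w} \;=\; \sigma^\lambda(x)\,(v\otimes \overline{w})\,\sigma^\lambda(x)^{-1}
\]
for every $v,w\in\cH_\lambda$ and every $x\in G$, where both sides are regarded as elements of $\End(\cH_\lambda)$ under the standard identification $v\otimes \overline{w} \colon u\mapsto \langle u,w\rangle v$. This is immediate from the unitarity of $\sigma^\lambda(x)$, since $\langle u,\sigma^\lambda(x)w\rangle = \langle \sigma^\lambda(x)^{-1}u,w\rangle$. Applying this with $v=w=\sigma^\lambda(z)v_\lambda$ allows me to pull $\sigma^\lambda(x)$ and $\sigma^\lambda(x)^{-1}$ outside of the integral over $z$, giving exactly $\sigma^\lambda(x)T_f \sigma^\lambda(x)^{-1}$ by another application of Lemma~\ref{lem:toeplitz_as_integrated_rep}.

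I do not expect any real obstacle here: the only subtle point is that we are implicitly working with the Toeplitz operator in its incarnation on $\cH_\lambda$ (as opposed to $\cA^\lambda$), so that the conjugation by $\sigma^\lambda(x)$ makes literal sense. Since Lemma~\ref{lem:toeplitz_as_integrated_rep} is stated precisely in that picture and the intertwining $\cH_\lambda \to \cA^\lambda$, $w\mapsto \sigma^\lambda_{w,\sqrt{d_\lambda}v_\lambda}$, carries the action of $\sigma^\lambda$ to the quasi-regular action on $\cA^\lambda$, the statement of the lemma is unambiguous and the short computation above suffices.
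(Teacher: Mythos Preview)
Your proposal is correct and follows essentially the same argument as the paper: write $T_{L(x)f}$ via Lemma~\ref{lem:toeplitz_as_integrated_rep}, change variables $y\mapsto xy$ using left-invariance of Haar measure, use the identity $\sigma^\lambda(x)v\otimes\overline{\sigma^\lambda(x)w}=\sigma^\lambda(x)(v\otimes\overline{w})\sigma^\lambda(x)^{-1}$, and pull the conjugation outside the integral. Your added remarks on unitarity and on the $\cH_\lambda$-versus-$\cA^\lambda$ picture are accurate and make explicit what the paper leaves implicit.
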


\begin{proof}
   If $f\in L^1(G/L)$, then 
   \begin{align*}
      T_{L(x)f} & = d_{\lambda} \int_G f(x^{-1}y) \sigma^\lambda(y)v_\lambda \otimes \overline{\sigma^\lambda(y)v_\lambda} \, dy \\ 
      & = d_{\lambda} \int_G f(y) \sigma^\lambda(xy) v_\lambda \otimes \overline{\sigma^\lambda(xy) v_\lambda}\, dy \\
      & = d_{\lambda} \int_G f(y) \sigma^\lambda(x) (\sigma^\lambda(y) v_\lambda \otimes \overline{\sigma^\lambda(y) v_\lambda}) \sigma^\lambda(x)^{-1} \, dy \\
      & = d_{\lambda} \sigma^\lambda(x) \int_G f(y) (\sigma^\lambda(y) v_\lambda \otimes \overline{\sigma^\lambda(y) v_\lambda})\, dy \, \sigma^\lambda(x)^{-1}  \\
      & = \sigma^\lambda(x) T_f \sigma^\lambda(x)^{-1}.
   \end{align*}
\end{proof}

We now have the following immediate corollary, which shows that the Toeplitz operators with $H$-invariant symbols are $H$-intertwining operators are precisely those with $H$-invariant symbols:
\begin{corollary}
   If $H\leq G$ and $f\in L^1(G/L)$, then $T_f$ is in the $*$-algebra $\End_H(\cH_\lambda)$ of intertwining operators for $\sigma_H$ if $f$ is $H$-invariant---that is, $L(h)f = f$ for all $h\in H$.  We write $L^1(G/L)^H$ (and similarly $C^\infty(G/L)^H$, $L^\infty(G/L)^H$, etc.) for the space of $H$-invariant functions in $L^1(G/L)$.
\end{corollary}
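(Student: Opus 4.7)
The plan is to derive this directly from Lemma~\ref{lem:toeplitz_invariant_symbol}, which is already the substantive content; the corollary is really just a rephrasing once one restricts the intertwining identity to the subgroup $H$. So no new computation is required, only a short deduction.

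First, I would fix $h\in H$ and invoke Lemma~\ref{lem:toeplitz_invariant_symbol} with $x = h$ to get the identity
\[
   T_{L(h)f} = \sigma^\lambda(h)\, T_f\, \sigma^\lambda(h)^{-1}.
\]
Since $f$ is assumed to be $H$-invariant, we have $L(h)f = f$, and therefore $T_{L(h)f} = T_f$. Substituting this into the displayed identity yields
\[
   T_f = \sigma^\lambda(h)\, T_f\, \sigma^\lambda(h)^{-1},
\]
equivalently $\sigma^\lambda(h)\, T_f = T_f\, \sigma^\lambda(h)$. Since this holds for all $h\in H$, the operator $T_f$ commutes with $\sigma^\lambda(H)$, i.e.\ $T_f\in \End_H(\cH_\lambda)$.

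There is no genuine obstacle here: the only subtlety worth noting is that Lemma~\ref{lem:toeplitz_invariant_symbol} was stated for $f\in L^1(G/L)$, and the hypothesis of the corollary places $f$ in the same space, so the lemma applies verbatim with no extension or density argument needed. The proof occupies a couple of lines.
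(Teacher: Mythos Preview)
Your proposal is correct and matches the paper's approach exactly: the paper presents this as an ``immediate corollary'' of Lemma~\ref{lem:toeplitz_invariant_symbol} with no further argument, and your two-line deduction is precisely the intended one.
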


In fact, it turns out that \textit{all} $H$-intertwining operators on $\cH_\lambda$ are given by Toeplitz operators with $H$-invariant symbols:
\begin{proposition}
   Let $H$ be a closed subgroup of $G$.  Then the vector space 
   \[
      \cT^\lambda_H := T^\lambda(C^\infty(G/L)^H) = \{ T^\lambda_f \mid f\in C^\infty(G/L)^H \}
   \]
   of Toeplitz operators with $H$-invariant symbols is equal to the $*$-algebra $\End_H(\cH_\lambda)$ of intertwining operators for $\sigma|_H$.  
\end{proposition}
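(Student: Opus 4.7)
The plan is to prove the two inclusions $\cT^\lambda_H \subseteq \End_H(\cH_\lambda)$ and $\End_H(\cH_\lambda) \subseteq \cT^\lambda_H$ separately. The first is essentially already done: given $f\in C^\infty(G/L)^H$ and $h\in H$, Lemma~\ref{lem:toeplitz_invariant_symbol} gives $\sigma^\lambda(h) T_f \sigma^\lambda(h)^{-1} = T_{L(h)f} = T_f$, so $T_f$ commutes with $\sigma^\lambda|_H$ and hence lies in $\End_H(\cH_\lambda)$.

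For the reverse inclusion, the main tool is Proposition~\ref{prop:all_operators_are_Toeplitz}, which provides surjectivity of $T^\lambda : C^\infty(G/L)\rightarrow\End(\cH_\lambda)$ without any invariance constraints. Given $A\in\End_H(\cH_\lambda)$, I first pick any $f\in C^\infty(G/L)$ with $T_f = A$. Because $A$ commutes with $\sigma^\lambda(h)$ for each $h\in H$, Lemma~\ref{lem:toeplitz_invariant_symbol} yields
\[
   T_{L(h)f} \;=\; \sigma^\lambda(h) T_f \sigma^\lambda(h)^{-1} \;=\; A
\]
for every $h\in H$. In other words, the entire $H$-orbit of $f$ under left translation is sent by $T^\lambda$ to the single operator $A$.

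The natural next step is to average: define
\[
   \widetilde{f}(x) \;:=\; \int_H f(h^{-1} x)\, dh
\]
using the normalized Haar measure on the compact group $H$. Smoothness of $\widetilde{f}$ follows from smoothness of $f$ and compactness of $H$, and by construction $L(h')\widetilde{f} = \widetilde{f}$ for all $h'\in H$, so $\widetilde{f}\in C^\infty(G/L)^H$. It remains to verify $T_{\widetilde{f}} = A$. This is a routine Fubini-type interchange: the map $H\to\End(\cH_\lambda)$, $h\mapsto T_{L(h)f}$, is continuous into a finite-dimensional space, so by linearity and continuity of $T^\lambda$ on $L^1(G/L)$ one obtains
\[
   T_{\widetilde{f}} \;=\; \int_H T_{L(h)f}\, dh \;=\; \int_H A\, dh \;=\; A.
\]

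I expect no serious obstacle here; the argument is formally identical to the standard averaging trick that recovers invariants in representation theory, and all the analytic ingredients (finite-dimensionality of the target, compactness of $H$, and continuity of $T^\lambda$ already established earlier in the paper) are in place. The one point that deserves a sentence of justification is the interchange of $T^\lambda$ with the Haar integral over $H$, which is immediate because the integrand is a continuous function on a compact group valued in the finite-dimensional Banach space $\End(\cH_\lambda)$, and $T^\lambda$ is a bounded linear map.
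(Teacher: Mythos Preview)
Your proof is correct and follows essentially the same route as the paper: one inclusion comes directly from Lemma~\ref{lem:toeplitz_invariant_symbol}, and for the other you pick a preimage via Proposition~\ref{prop:all_operators_are_Toeplitz} and average it over $H$ to produce an $H$-invariant symbol with the same Toeplitz operator. The only cosmetic difference is that the paper verifies the identity $T_{f^H}=\int_H\sigma^\lambda(h)T_f\sigma^\lambda(h)^{-1}\,dh$ by unfolding the integral formula for $T_{f^H}$ directly, whereas you obtain it by citing Lemma~\ref{lem:toeplitz_invariant_symbol} pointwise and then interchanging $T^\lambda$ with the Haar integral.
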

\begin{proof}
   Let $f\in C^\infty(G/L)$.  We define $f^H\in C^\infty(G/L)^H$ by:
   \[
      f^H(x)= \int_H f(hx) dh \\
   \]
   for all $x\in G$. Then
   \begin{align*}
      T_{f^H} & = d_{\lambda} \int_G \int_H f(hx) dh\, \sigma^\lambda(x)v_\lambda \otimes \overline{\sigma^\lambda(x)v_\lambda} dh\, dx\\
         & = d_{\lambda} \int_H \int_G f(x) \sigma^\lambda(h^{-1}x)v_\lambda \otimes \overline{\sigma^\lambda(h^{-1}x)v_\lambda} dx\, dh \\
         & = d_{\lambda} \int_H \int_G f(x) \sigma^\lambda(hx)v_\lambda \otimes \overline{\sigma^\lambda(hx)v_\lambda} dx\, dh\\
         & = \int_H \sigma^\lambda(h) T_f \sigma^\lambda(h)^{-1} dh.
   \end{align*}

   Now let $S\in \End_H(\cH_\lambda)$ be an intertwining operator for $\sigma|_H$.  By Proposition~\ref{prop:all_operators_are_Toeplitz}, we know that there is $\varphi\in C^\infty(G/L)$ such that $S = T_\varphi$.  Then we have that:
   \begin{align*}
      S & = \int_H \sigma^\lambda(h) S \sigma^\lambda(h)^{-1} dh \\
        & = \int_H \sigma^\lambda(h) T_\varphi \sigma^\lambda(h)^{-1} dh \\
        & = T_{\varphi^H}.
   \end{align*} 
   Since $\varphi^H$ is $H$-invariant, we are done.
\end{proof}

In fact, it does not matter whether one chooses smooth, $L^1$, or $L^\infty$ symbols.
\begin{corollary}
   Let $H$ be a closed subgroup of $G$.  Then 
   \[
      T^\lambda(C^\infty(G/L)^H) = T^\lambda(L^\infty(G/L)^H) = T^\lambda(L^p(G/L)^H) 
   \]
   for all $1\leq p <\infty$.
\end{corollary}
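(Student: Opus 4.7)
The plan is to deduce this directly from the preceding proposition, which already established that $T^\lambda(C^\infty(G/L)^H) = \End_H(\cH_\lambda)$. Thus it suffices to prove that the images $T^\lambda(L^\infty(G/L)^H)$ and $T^\lambda(L^p(G/L)^H)$ each coincide with the intertwiner algebra $\End_H(\cH_\lambda)$ as well.

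First, since $G$ is compact, $G/L$ has finite Haar measure, so for every $1\leq p\leq\infty$ we have $L^\infty(G/L)^H \subseteq L^p(G/L)^H \subseteq L^1(G/L)^H$. The Toeplitz quantization $T^\lambda$ is defined on all of $L^1(G/L)$ by the continuous-extension lemma, and the earlier corollary on invariant symbols tells us that $T_f^\lambda \in \End_H(\cH_\lambda)$ whenever $f\in L^1(G/L)^H$. Therefore both $T^\lambda(L^\infty(G/L)^H)$ and $T^\lambda(L^p(G/L)^H)$ are contained in $\End_H(\cH_\lambda)$.

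For the reverse inclusion, we use $C^\infty(G/L)^H \subseteq L^\infty(G/L)^H \subseteq L^p(G/L)^H$, which gives
\[
   \End_H(\cH_\lambda) = T^\lambda(C^\infty(G/L)^H) \subseteq T^\lambda(L^\infty(G/L)^H) \subseteq T^\lambda(L^p(G/L)^H) \subseteq \End_H(\cH_\lambda),
\]
and similarly with $L^p$ and $L^\infty$ interchanged in the middle of the chain. Hence all three images coincide, yielding the corollary.

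There is no real obstacle here; this is a bookkeeping argument that packages the preceding proposition together with the $H$-invariance/intertwiner correspondence. The only subtlety worth flagging is the use of compactness of $G/L$ to ensure that the $L^p$- and $L^\infty$-symbol classes embed inside the $L^1$-class on which $T^\lambda$ was originally defined; without this the statement would not even typecheck.
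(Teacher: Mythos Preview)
Your argument is correct and is precisely the routine bookkeeping the paper leaves implicit (the corollary is stated without proof there): the preceding proposition gives $T^\lambda(C^\infty(G/L)^H)=\End_H(\cH_\lambda)$, the inclusion $C^\infty\subseteq L^\infty\subseteq L^p\subseteq L^1$ on the compact space $G/L$ gives one chain of containments, and the earlier observation that $H$-invariant $L^1$-symbols yield $H$-intertwiners closes the loop.
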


We have now proved the following result, which is essentially identical to Theorem~6.4 in \cite{DOQ}, which was proved in the context of complex bounded symmetric domains. 
\begin{theorem}
   Let $H$ be a closed subgroup of $G$.  Then the Toeplitz operators with $H$-invariant symbols commute if and only if $\sigma^\lambda|_H$ is a multiplicity-free representation.  
\end{theorem}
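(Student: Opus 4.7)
The plan is to reduce the theorem to a standard structural fact about endomorphism algebras via the previous proposition, which already identifies the algebra of Toeplitz operators with $H$-invariant symbols. More precisely, the preceding proposition establishes that
\[
   \cT^\lambda_H = \End_H(\cH_\lambda),
\]
so the question of whether the Toeplitz operators with $H$-invariant symbols commute is equivalent to the question of whether the $*$-algebra $\End_H(\cH_\lambda)$ of $H$-intertwining operators on $\cH_\lambda$ is commutative.

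I would then invoke the classical consequence of Schur's lemma for compact groups: decomposing the restriction $\sigma^\lambda|_H$ into $H$-isotypic components
\[
   \cH_\lambda \cong \bigoplus_{\pi\in \widehat{H}} \C^{n_\pi} \otimes \cH_\pi,
\]
with finitely many nonzero multiplicities $n_\pi\in\Z^{\geq 0}$, Schur's lemma yields the $*$-algebra isomorphism
\[
   \End_H(\cH_\lambda) \cong \bigoplus_{\pi\in\widehat{H}} M_{n_\pi}(\C),
\]
where $M_{n_\pi}(\C)$ acts by intertwining the multiplicity space $\C^{n_\pi}$ while leaving $\cH_\pi$ untouched. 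The direct sum on the right is commutative if and only if every $n_\pi \leq 1$, which is precisely the definition of $\sigma^\lambda|_H$ being multiplicity free.

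Combining the two steps gives the equivalence in both directions: if $\sigma^\lambda|_H$ is multiplicity free then $\End_H(\cH_\lambda)$ is an abelian algebra and hence so is $\cT^\lambda_H$; conversely, if some multiplicity $n_\pi$ is at least $2$, then $M_{n_\pi}(\C)$ embeds as a noncommutative subalgebra of $\End_H(\cH_\lambda) = \cT^\lambda_H$, producing noncommuting Toeplitz operators with $H$-invariant symbols. No step here looks technically hard: the entire content is already packaged in the preceding proposition together with Schur's lemma, so the proof is essentially a quotation. The only point that deserves care is making sure that the isomorphism $\End_H(\cH_\lambda) \cong \bigoplus_\pi M_{n_\pi}(\C)$ is stated as an isomorphism of $*$-algebras (not merely of vector spaces), since it is the multiplicative structure that is relevant to commutativity.
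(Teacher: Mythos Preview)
Your argument is correct and matches the paper's approach exactly: the paper also treats the theorem as an immediate consequence of the preceding proposition $\cT^\lambda_H = \End_H(\cH_\lambda)$ together with the standard Schur's-lemma description of $\End_H(\cH_\lambda)$ as $\bigoplus_\pi M_{n_\pi}(\C)$. There is nothing to add or correct.
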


In particular, if $\sigma^\lambda|_H$ is multiplicity free, then we see that for each symbol $f\in L^1(G/L)^H = L^1(H\backslash G/L)$, the Toeplitz operator acts by a multiple of the identity on each $H$-invariant subspace of $\cH_\lambda$.

We now take stock of the situation: there are several multiplicity-free results known for the restriction of irreducible representations of a compact Lie group $G$ to certain subgroups $H$.  One has, for instance the classical branching laws, which are multiplicity-free for branching from $\U(n)$ to $\U(n-1)$ and from $\SO(n)$ to $\SO(n-1)$.  Additionally, T.\ Kobayashi has proved (see \cite{Kobayashi1,Kobayashi2}) that whenever the highest weight $\lambda\in\Lambda^+$ is such that the subgroup $L$ given by Corollary~\ref{cor:absolute_value_equals_one} is a symmetric subgroup of $G$ (i.e., such that $G/L$ is a symmetric space), then the restriction $\sigma^\lambda|_L$ is multiplicity free.  These representations are called \textit{pan-type} representations by Kobayashi, and he classified them in the cited papers.  

Let us briefly consider the case of $G=\U(n)$ and $H=\U(n-1)$. Let $\lambda\in\Lambda^+(\U(n))$ be a dominant integral weight. Then the Toeplitz operators on $\cH_\lambda$ with $\U(n-1)$ invariant symbols all commute. In fact, we further know that for any symbol $f\in L^1(\U(n)/T)^{\U(n-1)}$, the Toeplitz operator $T^\lambda_f$ acts as a constant multiple of the identity on each $\U(n-1)$-invariant subspace of $\cH_\lambda$.  In fact, in this particular case one can say a lot about the structure of the space $L^2(\U(n)/T)^{\U(n-1)} = L^2(\U(n-1)\backslash \U(n)/T)$, which consists of functions on $\U(n)$ that are right-invariant under $T$ and left-invariant under $\U(n-1)$.  In this case, there is a natural $\U(n)$ equivariant diffeomorphism $\U(n)/\U(n-1) \rightarrow S^{2n-1}$. In fact, it turns out that $\U(n)/\U(n-1)$ is a commutative space (although it is not a symmetric space), so the quasi-regular representation on $L^2(\U(n)/\U(n-1))$ is multiplicity free.

%
%

\section{A Tensor Product and the Kernel of the Toeplitz Quantization Map}
\label{sec:matrix_coefficients}
In Section~\ref{sec:Toeplitz_definition}, we showed that the Toeplitz quantization map is surjective.  In this section, we determine the kernel of the Toeplitz quantization map.  In order to do so, we need to take a closer look at the tensor product representation $\siot$. 

The representation $\siot$ can be decomposed as a direct sum of irreducible representations.  We use the notation $\rho \preceq \pi$ to denote that a unitary representation $\rho$ of $G$ is equivalent to a subrepresentation of the unitary representation $\pi$.  We furthermore denote by $m(\rho,\pi):=m_G(\rho,\pi)$ the multiplicity of $\rho$ in $\pi$. A closed formula for these multiplicities in terms of the highest weight $\lambda$ of the representation $\sigma$, which is related to the famous Kostant formula for weight multiplicities, may be found in \cite{Steinberg}.

We can write
\begin{equation}
   \label{eq:tensor_decomp_space}
   \hpiot = \bigoplus_{\rho\preceq\siot} m(\rho,\siot) \cH^\rho,
\end{equation}
where $m(\rho,\siot)\cH^\rho = (\cH^\rho)^{\oplus m(\rho,\siot)}$ is a direct sum of $m(\rho,\siot)$ copies of $\cH^\rho$.  For each vector $w\in\hpiot$, we write:
\begin{equation}
   \label{eq:tensor_decomp_vectors}
   w = \sum_{\rho\preceq\siot} \sum_{k=1}^{m(\rho,\siot)} w^\rho_k,
\end{equation}
where each $w^\rho_k\in \cH^\rho$ represents the projection onto the $k$th copy of $\cH^\rho$ in the decomposition of $\hplam$.

While not strictly necessary for what follows, it is useful to note that one can embed the representation $\hpiot$ canonically as a subrepresentation of $L^2(G/T)$ in the following way: since $\viot$ is a cyclic vector for $\siot$, it follows that the operator
\begin{align*}
   C: \hpiot & \rightarrow L^2(G/T) \\
   \iot vw & \mapsto d_\lambda\langle \iot vw, \siot(\cdot) \viot \rangle_{\hpiot} 
   = d_\lambda \sigma^\lambda_{v,v_\lambda} \overline{\sigma^\lambda_{w,v_\lambda}}
\end{align*}
is an isometric $G$-intertwining operator from $\siot$ to the quasi-regular representation on $L^2(G/T)$. 

In fact, this embedding can be refined slightly.  Suppose that
\[
   P = \{g\in G^\C \mid \sigma^\lambda(g) v_\lambda \in \C v_\lambda \}
\]
is the parabolic subgroup constructed by Lemma~\ref{lem:stabilizer_of_conical}.  One quickly sees that, in fact, if $l\in L=P\cap G$, then 
\begin{align*}
   C(\iot vw)(xl) & = \langle v, \sigma^\lambda(xl) v_\lambda \rangle \overline{\langle w, \sigma^\lambda(xl) v_\lambda \rangle }\\
                  & = \chi_\lambda(l)^{-1}\chi_\lambda(l) \langle v, \sigma^\lambda(x) v_\lambda \rangle \overline{\langle w, \sigma^\lambda(x) v_\lambda \rangle} \\
                  & = C(\iot vw)(x)
\end{align*}
for all $x\in G$ and $l\in L$. We see that, in fact, the image of $C$ falls in $L^2(G/L)$:
\[
   C: \hpiot \rightarrow L^2(G/L).
\]
Furthermore $C(\viot) = d_\lambda |\Delta^\lambda|^2 \in L^2(L\backslash G / L)$.  

As a consequence, one has that $\siot$ is a multiplicity-free representation whenever $G/L$ is a commutative space.  This occurs, for instance, if $G/L$ is a symmetric space. The most well-studied case is that of a compact Hermitian symmetric space, in which case $P\leq G^\C$ is a maximal parabolic subgroup. In the case of $G=\U(n)$, this corresponds to representations with highest weight $\lambda = k\omega_i$ for some $k\in\N$ and $1\leq i\leq n$. More generally, representations with highest weight such that $L$ is a symmetric subgroup of $G$ were given the name ``pan-type representations'' by T.\ Kobayashi and were classified in \cite{Kobayashi1} (see also \cite{Kobayashi2}), as we mentioned above.   

In general, $\siot$ is not a multiplicity-free representation, and thus the decomposition of $\siot$ in irreducible representations is not unique, so the decompositions in (\ref{eq:tensor_decomp_space}) and (\ref{eq:tensor_decomp_vectors}) are not canonical; more precisely, the projection $w^\pi$ of a vector $w\in\hpiot$ onto the space $(\hpiot)^\pi$ of $\pi$-isotypic vectors \textit{is} canonical, but the decomposition of $(\hpiot)^\pi$ as a sum of copies of $\cH_\pi$ is not.  Nevertheless, for convenience we will assume for the moment that all tensor product representations of the form $\cH_\lambda$ have been decomposed as a sum of irreducible representations; some results, such as Proposition~\ref{prop:kernel_of_meta_Toeplitz}, do not depend on the choice of decomposition. 

Our next step is to provide a formula for the matrix coefficients of Toeplitz operators in terms of the decompositions (\ref{eq:tensor_decomp_space}) and (\ref{eq:tensor_decomp_vectors}). 

\begin{proposition}
   \label{prop:Toeplitz_matrix_coefficient}
   Using the notation above, if $\pi\preceq\siot$, $v_0\in \cH_\pi^L = \{v\in \cH_\pi | \pi(L)v = v\}$, and $u\in\cH_\pi$ then
   \[
      \langle v,  T_{\pi_{u,v_0}} w \rangle = \frac{d_{\lambda}}{d_{\pi}} \sum_{k=1}^{m(\pi,\siot)} \langle (v\otimes\overline{w})^\pi_k, u \rangle \langle v_0, (\viot)^\pi_k \rangle 
   \]
for all $v,w\in\cH_\lambda$.  
\end{proposition}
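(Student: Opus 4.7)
The plan is to reduce the matrix element $\langle v, T_{\pi_{u,v_0}} w\rangle$ to a direct application of Schur orthogonality on $G$. First, I would combine Lemma~\ref{lem:toeplitz_as_integrated_rep} with the identity $T_f^{*}=T_{\overline{f}}$ from Lemma~\ref{lem:toeplitz_basic_properties} to rewrite
\[
\langle v, T_f w\rangle \;=\; \langle T_{\overline{f}}\, v, w\rangle \;=\; d_\lambda \int_G \overline{f(y)}\,\sigma^\lambda_{v,v_\lambda}(y)\,\overline{\sigma^\lambda_{w,v_\lambda}(y)}\,dy,
\]
where the last equality re-uses the calculation carried out in the proof of Lemma~\ref{lem:toeplitz_as_integrated_rep} with $\overline{f}$ in place of $f$. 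Taking the complex conjugate of $f$ inside the integral is the essential bookkeeping step: it casts the integral into exactly the form where Schur orthogonality applies cleanly.

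Next, I would recognise the product of matrix coefficients as a single matrix coefficient of the tensor-product representation,
\[
\sigma^\lambda_{v,v_\lambda}(y)\,\overline{\sigma^\lambda_{w,v_\lambda}(y)} \;=\; \langle v\otimes\overline{w},\,(\siot)(y)(\viot)\rangle_{\hpiot}.
\]
Because the orthogonal projection onto each constituent copy of an isotypic subspace commutes with $(\siot)(y)$, the decompositions (\ref{eq:tensor_decomp_space})--(\ref{eq:tensor_decomp_vectors}) of $v\otimes\overline{w}$ and $\viot$ expand this as
\[
\langle v\otimes\overline{w},\,(\siot)(y)(\viot)\rangle_{\hpiot} \;=\; \sum_{\rho\preceq\siot}\sum_{k=1}^{m(\rho,\siot)} \rho_{(v\otimes\overline{w})^\rho_k,\,(\viot)^\rho_k}(y),
\]
once each copy of $\cH^\rho$ is identified with $\cH_\rho$ in the standard way, so that the restriction of $\siot$ becomes $\rho$.

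Specialising to $f=\pi_{u,v_0}$ and swapping sum and integral, the desired identity becomes a sum of integrals of the form $\int_G \rho_{a,b}(y)\,\overline{\pi_{u,v_0}(y)}\,dy$, which Schur orthogonality evaluates to $\tfrac{\delta_{\pi,\rho}}{d_\pi}\,\langle a,u\rangle\,\langle v_0,b\rangle$ for irreducible unitary $\pi,\rho$. Only the terms with $\rho\cong\pi$ survive, and collecting them with $a=(v\otimes\overline{w})^\pi_k$ and $b=(\viot)^\pi_k$ delivers exactly the claimed formula.

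There is no deep obstacle; the argument is a straightforward accounting of Schur orthogonality combined with the irreducible decomposition of the $\siot$-matrix coefficient singled out by $v\otimes\overline{w}$ and $\viot$. What warrants the most care is the complex conjugate produced in passing from $\langle T_f v, w\rangle$ to $\langle v, T_f w\rangle$: it is precisely that conjugate on $f$ which makes the pairings appear in the order $\langle (v\otimes\overline{w})^\pi_k,u\rangle$ and $\langle v_0,(\viot)^\pi_k\rangle$ (rather than their complex conjugates) on the right-hand side.
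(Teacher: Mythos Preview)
Your proof is correct and follows essentially the same route as the paper's: both reduce $\langle v, T_{\pi_{u,v_0}} w\rangle$ to $d_\lambda\int_G \overline{\pi_{u,v_0}(x)}\,\langle v\otimes\overline{w},(\siot)(x)(\viot)\rangle\,dx$, decompose the tensor-product matrix coefficient along the irreducible summands, and then apply Schur orthogonality. The only cosmetic difference is that you invoke $T_f^*=T_{\overline f}$ to produce the conjugate on $f$, whereas the paper obtains it directly by expanding $\langle v, T_f w\rangle$ from Lemma~\ref{lem:toeplitz_as_integrated_rep}.
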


\begin{proof}
   Thus, if we let $\pi\preceq\siot$ and we let $v_0\in\cH_\pi^L = \{v\in \cH_\pi | \pi(L)v = v\}$, then:
   \begin{align*}
      \allowdisplaybreaks
      \langle v,  T_{\pi_{u,v_0}} w \rangle & = d_{\lambda} \int_G \overline{\pi_{u,v_0}(x)} \langle v,  (\siox{x}) w \rangle dx \\
             & = d_{\lambda} \int_G \overline{\pi_{u,v_0}(x)}  \langle v, \langle w, \sigma^\lambda(x) v_\lambda \rangle \sigma^\lambda(x) v_\lambda \rangle dx\\
             & = d_{\lambda} \int_G \overline{\pi_{u,v_0}(x)}  \langle v, \sigma^\lambda(x) v_\lambda\rangle \overline{\langle w, \sigma^\lambda(x)v_\lambda \rangle} dx\\
             & = d_{\lambda} \int_G \langle v\otimes{\overline{w}} , \siot (x) (\viot) \rangle \overline{\pi_{u,v_0}(x)} dx \\
             & = d_{\lambda} \sum_{\rho\preceq \siot} \sum_{k=1}^{m(\rho,\siot)} \int_G\langle (v\otimes\overline{w})^\rho_k, \rho(x) (\viot)^\rho_k \rangle \overline{\pi_{u,v_0}(x)} dx \\
             & = d_{\lambda} \sum_{k=1}^{m(\pi, \siot)} \langle \pi_{(v\otimes\overline{w})^\pi_k, (\viot)^\pi_k}, \pi_{u,v_0} \rangle \\
             & = \frac{d_{\lambda}}{d_{\pi}} \sum_{k=1}^{m(\pi,\siot)} \langle (v\otimes\overline{w})^\pi_k, u \rangle \langle v_0, (\viot)^\pi_k \rangle. 
   \end{align*}

\end{proof}

We notice that $(\viot)^\rho_k \neq 0$ for each $\rho \preceq \siot$ and all $k\in\{1,\ldots,m(\pi,\siot)\}$, since $\viot$ is a cyclic vector for $\siot$.  In fact, we can conclude that 
\begin{equation}
   \label{eqn:linearly_independent_weight_zero_set}
   (\viot)^\pi_1, \ldots, (\viot)^\pi_{m(\pi,\siot)}
\end{equation}
is a linearly-independent subset of $\cH_\pi$.  Furthermore, the fact that $\viot$ is an $L$-invariant vector implies that its projection onto each irreducible factor of $\siot$ is also $L$-invariant.  Thus, the vectors in~(\ref{eqn:linearly_independent_weight_zero_set}) are contained the space $\cH_\pi^L$ of $L$-invariant vectors in $\cH_\pi$.   

We define the vector space 
\[
   \cK^\lambda_\pi = \Span\{(\viot)^\pi_k \mid k=1,\ldots m(\pi,\siot)\} \subseteq \cH_\pi^L
\]
and note that $\dim\cK^\lambda_\pi = m_G(\pi, \siot) \leq \dim \cH_\pi^L$.

If $(\pi,\cH_\pi)$ is a unitary representation of $G$ and $v_0\in \cH_\pi^L \ominus \cK^\lambda_\pi$ (that is, $v\in \cH_\pi^L$ and $v\perp \cK^\lambda_\pi$), then clearly $\pi_{u,v_0}\in \ker T^\lambda$. We now define:
\[
   \cK^\lambda = \Span\{\pi_{u,v_0} \mid \pi \preceq \siot, u\in\cH_\pi, v_0\in\cK^\lambda_\pi\}
\]

\begin{proposition}
   \label{prop:kernel_of_meta_Toeplitz}
   The kernel of $T^\lambda$ is $\ker T^\lambda = (\cK^\lambda)^\perp$.  In particular, 
\[
   T^\lambda: \cK^\lambda \rightarrow \hpiot = \End(\hplam)
\]
is a linear isomorphism.

\end{proposition}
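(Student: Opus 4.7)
The plan is to prove both $(\cK^\lambda)^\perp \subseteq \ker T^\lambda$ and, via a dimension count combined with surjectivity, $\cK^\lambda \cap \ker T^\lambda = \{0\}$. Throughout, orthogonality refers to the $L^2(G/L)$ inner product, and I would start by invoking the Peter-Weyl / Frobenius reciprocity decomposition
\[
   L^2(G/L) = \bigoplus_{\pi \in \widehat{G}} \mathrm{Span}\{ \pi_{u,v_0} \mid u \in \cH_\pi,\ v_0 \in \cH_\pi^L \},
\]
in which $\cK^\lambda$ sits as the finite-dimensional subspace built from $\pi \preceq \siot$ and $v_0 \in \cK^\lambda_\pi \subseteq \cH_\pi^L$.

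For the inclusion $(\cK^\lambda)^\perp \subseteq \ker T^\lambda$, the orthogonal complement splits as the sum of matrix coefficients $\pi_{u,v_0}$ with either (i) $\pi \not\preceq \siot$ and $v_0 \in \cH_\pi^L$, or (ii) $\pi \preceq \siot$ and $v_0 \in \cH_\pi^L \ominus \cK^\lambda_\pi$. Expanding $T^\lambda_{\pi_{u,v_0}}$ via Lemma~\ref{lem:toeplitz_as_integrated_rep} and decomposing $\siot$ into irreducibles produces, for each pair $(v,w)$, a sum of Schur-orthogonality pairings of matrix coefficients of $\rho \preceq \siot$ against $\pi_{u,v_0}$. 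In case (i) no $\rho$ in $\siot$ is equivalent to $\pi$, so every pairing vanishes. In case (ii) only $\rho = \pi$ contributes, and the formula from Proposition~\ref{prop:Toeplitz_matrix_coefficient} shows the surviving terms carry the factor $\langle v_0, (\viot)^\pi_k\rangle$, which is zero for every $k$ by the definition of $\cK^\lambda_\pi$.

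For the reverse inclusion, I would count dimensions. Since the matrix-coefficient map is nondegenerate on each isotypic block and $\dim \cK^\lambda_\pi = m(\pi,\siot)$,
\[
   \dim \cK^\lambda = \sum_{\pi \preceq \siot} d_\pi \cdot m(\pi,\siot) = \dim \hpiot = d_\lambda^2 = \dim \End(\cH_\lambda).
\]
By Proposition~\ref{prop:all_operators_are_Toeplitz}, $T^\lambda : C^\infty(G/L) \to \End(\cH_\lambda)$ is surjective. The $L^2$-projection onto the finite-dimensional smooth subspace $\cK^\lambda$ is given by integration against a smooth reproducing kernel, hence preserves $C^\infty(G/L)$, so every smooth $\varphi$ splits as $\varphi_1 + \varphi_2$ with $\varphi_1 \in \cK^\lambda$ and $\varphi_2 \in (\cK^\lambda)^\perp \cap C^\infty(G/L)$. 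By the previous step $T^\lambda\varphi = T^\lambda\varphi_1$, so $T^\lambda|_{\cK^\lambda}$ is already surjective onto $\End(\cH_\lambda)$; the dimension equality then forces it to be an isomorphism, yielding $\cK^\lambda \cap \ker T^\lambda = \{0\}$ and hence $\ker T^\lambda = (\cK^\lambda)^\perp$. I do not anticipate any serious obstacle: the only moving parts are the bookkeeping between the Peter-Weyl decomposition of $L^2(G/L)$ and the decomposition of $\siot$ used in Proposition~\ref{prop:Toeplitz_matrix_coefficient}, which is mechanical once that formula is available.
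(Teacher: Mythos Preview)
Your proposal is correct and follows essentially the same approach as the paper: the inclusion $(\cK^\lambda)^\perp \subseteq \ker T^\lambda$ via Proposition~\ref{prop:Toeplitz_matrix_coefficient}, followed by the dimension count $\dim\cK^\lambda = \sum_\pi m(\pi,\siot)\,d_\pi = \dim(\hpiot)$ combined with the surjectivity of $T^\lambda$ from Proposition~\ref{prop:all_operators_are_Toeplitz}. You spell out more detail than the paper does---in particular the case split for $\pi\not\preceq\siot$ and the projection argument showing $T^\lambda|_{\cK^\lambda}$ is onto---but the latter is not really needed once you work directly in $L^2(G/L)=\cK^\lambda\oplus(\cK^\lambda)^\perp$, since the first inclusion already forces $T^\lambda|_{\cK^\lambda}$ to hit everything.
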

\begin{proof}
   First we note that $(\cK^\lambda)^\perp\subseteq \ker T^\lambda$ as a direct consequence of Proposition~\ref{prop:Toeplitz_matrix_coefficient}.  Finally, we observe that:
   \begin{align*}
      \dim \cK^\lambda & = \sum_{\pi\preceq\siot} \dim\cK^\lambda_\pi \dim \cH_\pi \\
                       & = \sum_{\pi\preceq\siot} m_G(\pi,\siot) d_{\pi}\\
                       & = d_{\siot} = \dim\ (\hpiot).
   \end{align*}
   This, together with the fact that $T^\lambda$ is surjective, finishes the proof.
\end{proof}

\section{The Berezin Transform}
\label{sec:Berezin_transform}
The Berezin transform is an important tool for semiclassical analysis of Toeplitz operators (a good introduction can be found in the book \cite{VasilevskiBook}).  In a certain sense, it moves in the opposite direction of the Toeplitz quantization; the Berezin transform assigns \textit{functions} on the domain (in our case, this is $G/L$) to each operator on the Bergman space, whereas the Toeplitz quantization assigns \textit{operators} on the Bergman space to each function (symbol) on the domain.  We can take this one step further and define an operator that takes functions on $G/L$ to functions on $G/L$, by taking the Berezin transform of the Toeplitz operator with a given symbol. 

We make the following observations before we define the Berezin transform on $\hpiot$.  Let $x\in G$.  Recall that $K^\lambda_x = d_{\lambda} \sigma^\lambda_{\sigma^\lambda(x)v_\lambda, v_\lambda} \in \cA^\lambda$.  Furthermore, under the identification of $\cA^\lambda$ with $\cH_\lambda$, we see that $K^\lambda_x = \sqrt{d_{\lambda}}\sigma^\lambda(x)v_\lambda \in \cH_\lambda$.  Thus, $\langle K^\lambda_x, K^\lambda_x \rangle = ||\sqrt{d_{\lambda}}\sigma^\lambda(x)v_\lambda||^2 = d_{\lambda} ||v_\lambda||^2 = d_{\lambda}$ for all $x\in G$.  
\begin{definition}
   For each operator $S\in\cB(\hplam)$, we define the \textbf{Berezin transform of $S$} to be the function $B^\lambda S:G/L\rightarrow\C$ defined by:
   \[
      xL \mapsto \frac{\langle S K^\lambda_x, K^\lambda_x \rangle}{\langle K^\lambda_x, K^\lambda_x \rangle} =   \langle S \sigma^\lambda(x)v_\lambda, \sigma^\lambda(x) v_\lambda \rangle =  \langle (\siot(x^{-1})S) v_\lambda, v_\lambda \rangle
   \]
   Furthermore, for each $f\in L^1(G/L)$, we define the \textbf{Berezin transform} of $f$ to be the Berezin transform of the operator $T_f$.  That is, $B^\lambda f := B^\lambda (T_f) :G/L\rightarrow \C$.
\end{definition}

The first important property of the Berezin transform is that it is a contraction and a convolution operator:

\begin{proposition}
   \label{prop:Berezin_convolution}
   Let $1\leq p \leq \infty$.  If $f\in L^p(G/L)$, then $B^\lambda f = d_{\lambda} f * |\Delta^\lambda|^2$.  As an operator $B^\lambda : L^p(G/L) \rightarrow L^p(G/L)$, one has that $||B^\lambda||_\op \leq 1$.
\end{proposition}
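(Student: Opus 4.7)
The plan is to compute $B^\lambda f(xL)$ directly from the definition, unfold $T_f$ by means of Lemma~\ref{lem:toeplitz_as_integrated_rep}, and recognize the resulting integral as a convolution with $d_\lambda |\Delta^\lambda|^2$. Then the norm estimate will follow from Young's convolution inequality combined with Schur orthogonality applied to the matrix coefficient $\Delta^\lambda = \sigma^\lambda_{v_\lambda,v_\lambda}$.

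More precisely, I would start from
\[
B^\lambda f(xL) = \langle T_f \sigma^\lambda(x)v_\lambda, \sigma^\lambda(x) v_\lambda\rangle
\]
and substitute $T_f = d_\lambda \int_G f(y)\, \sigma^\lambda(y)v_\lambda \otimes \overline{\sigma^\lambda(y)v_\lambda}\, dy$ from Lemma~\ref{lem:toeplitz_as_integrated_rep}. Pulling the scalar $f(y)$ out and interchanging the integral with the inner product (justified by continuity of the integrand in $y$, together with $\|f\|_1<\infty$ when $p<\infty$ via density of $L^\infty$, and directly when $p=\infty$), the integrand becomes $f(y) |\langle \sigma^\lambda(x)v_\lambda, \sigma^\lambda(y) v_\lambda\rangle|^2$. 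Since this inner product equals $\langle v_\lambda, \sigma^\lambda(y^{-1}x) v_\lambda \rangle = \Delta^\lambda(y^{-1}x)$, the expression collapses to
\[
B^\lambda f(x) = d_\lambda \int_G f(y)\, |\Delta^\lambda(y^{-1}x)|^2\, dy = d_\lambda\, (f * |\Delta^\lambda|^2)(x),
\]
which is the first claim. (The right-$L$-invariance of $|\Delta^\lambda|^2$, hence of $B^\lambda f$, follows from Corollary~\ref{cor:absolute_value_equals_one}, so this convolution really does land in $L^p(G/L)$.)

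For the operator-norm bound I would invoke Young's inequality $\|f * g\|_p \leq \|f\|_p\, \|g\|_1$ on the compact group $G$, giving
\[
\|B^\lambda f\|_p \leq d_\lambda\, \|f\|_p\, \bigl\| |\Delta^\lambda|^2 \bigr\|_1 = d_\lambda\, \|f\|_p\, \int_G |\sigma^\lambda_{v_\lambda,v_\lambda}(y)|^2\, dy.
\]
The Schur orthogonality relations for the irreducible unitary representation $(\sigma^\lambda, \cH_\lambda)$ yield $\int_G |\sigma^\lambda_{v_\lambda,v_\lambda}(y)|^2\, dy = \|v_\lambda\|^4 / d_\lambda = 1/d_\lambda$, so the constant is exactly $1$ and $\|B^\lambda\|_\op \leq 1$ as claimed.

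There is no real obstacle here; the only mildly delicate point is the interchange of integration and the inner product in the first step, which is routine because $\sigma^\lambda$ acts on the finite-dimensional Hilbert space $\cH_\lambda$. The computation really amounts to observing that the Berezin kernel is the squared modulus of the conical function, whose $L^1$-norm is pinned down by Schur orthogonality to be precisely $1/d_\lambda$, which is exactly what is needed to cancel the factor $d_\lambda$ and produce a contraction.
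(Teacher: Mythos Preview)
Your proof is correct and follows essentially the same route as the paper: unfold the definition of $B^\lambda f$ to obtain the integral $d_\lambda \int_G f(y)\,|\Delta^\lambda(y^{-1}x)|^2\,dy$, identify it as a convolution, and then bound the operator norm via Young's inequality together with the Schur-orthogonality computation $\|\,|\Delta^\lambda|^2\|_1 = 1/d_\lambda$. The only cosmetic difference is that the paper starts from the reproducing-kernel expression $\frac{1}{d_\lambda}\langle M_f K^\lambda_x, K^\lambda_x\rangle$ for $f\in L^\infty$ and then extends by density, whereas you invoke Lemma~\ref{lem:toeplitz_as_integrated_rep} directly; also note the harmless slip that $\langle \sigma^\lambda(x)v_\lambda,\sigma^\lambda(y)v_\lambda\rangle$ equals $\Delta^\lambda(x^{-1}y)=\overline{\Delta^\lambda(y^{-1}x)}$ rather than $\Delta^\lambda(y^{-1}x)$, which is immaterial once you take the modulus squared.
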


\begin{proof}
   Note that if $f\in L^\infty(G/L)$, then
   \begin{align*}
      B^\lambda f(x) & = \frac{1}{d_{\lambda}}\langle M_f K^\lambda_x, K^\lambda_x \rangle \\
         & = d_{\lambda}\int_{ G/L } f(y) |\Delta^\lambda(x^{-1}y)|^2 d(y) \\
         & = d_{\lambda}\int_{G/L} f(y) |\Delta^\lambda(y^{-1}x)|^2 d(y) \\
         & = d_{\lambda} f * |\Delta^\lambda|^2.
   \end{align*}
   The density of $L^\infty(G/L)$ in $L^p(G/L)$ (together with Young's inequality) allows us to finish the first claim.  

   Finally, Young's inequality shows that $||f * |\Delta^\lambda|^2||_p \leq ||f||_p ||\, 
   |\Delta^\lambda|^2||_1.$ We note that:
   \[
      ||\, |\Delta^\lambda|^2||_1 = \int_G \sigma^\lambda_{v_\lambda, v_\lambda}(g) \overline{\sigma^\lambda_{v_\lambda, v_\lambda}}(g) dg = \frac{1}{d_{\lambda}} ||v_\lambda||^2 = \frac{1}{d_{\lambda}}
   \]
   by Schur's orthogonality relations.  The fact that $||B^\lambda||_\op \leq 1$ follows immediately.

\end{proof}

As we did for Toeplitz operators, our next step is to write the matrix coefficients for the Berezin transform in terms of the decomposition of $\hpiot$ in irreducible subspaces.  We will also see that $B^\lambda$ is a positive intertwining operator.

Recall the intertwining operator
\begin{align*}
   C: \hpiot &  \rightarrow L^2(G/L)\\
      v\otimes\overline{w} & \mapsto \sigma^\lambda_{v,v_\lambda} \overline{\sigma^\lambda_{w,v_\lambda}}.
\end{align*}
It follows that $C((\hpiot)^\pi) \subseteq L^2(G/L)^\pi$, where $(\hpiot)^\pi$ and $L^2(G/L)^\pi$ denote the respective spaces of $\pi$-isotypic vectors in $\hpiot$ and $L^2(G/L)$.  Furthermore, we know from basic Peter-Weyl theory that:
\[
   L^2(G/L)^\pi = \Span\{\pi_{v,w} \mid v\in \cH_\pi, w\in \cH_\pi^L\} \cong \cH^\pi \otimes \overline{\cH_\pi^L}
\]

\begin{proposition}
   \label{prop:Berezin_matrix_coefficient_formula}
   As above, let $\pi\preceq\siot$, $u\in\cH_\pi$, and $v_0\in\cK^\lambda_\pi$. Then 
   \[
      B^\lambda (\pi_{u,v_0}) = \frac{d_{\lambda}}{d_{\pi}} \pi_{u, \cB_\pi^\lambda v_0},
   \]
   where $\cB^\lambda_\pi: \cK^\lambda_\pi \rightarrow \cK^\lambda_\pi$ is given by
\[
   \cB_\pi^\lambda =  \sum_{k=1}^{m(\pi,\siot)} (\viot)_k^\pi \otimes \overline{(\viot)_k^\pi}, 
\]
Furthermore, $\cB^\lambda_\pi$ is invertible for all $\pi\preceq\siot$. From its definition, it is clear that $\cB^\lambda$ is a positive operator.  It follows that the Berezin transform $B^\lambda :\cK^\lambda \rightarrow \cK^\lambda$ is an invertible, positive $G$-intertwining operator.  Furthermore, $\ker B^\lambda = (\cK^\lambda)^\perp$.  
\end{proposition}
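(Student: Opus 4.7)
The plan is to establish the matrix-coefficient formula for $B^\lambda$ first; every remaining assertion then drops out as straightforward linear algebra on each isotypic summand. For the formula, I would follow the same Schur-orthogonality pattern used in the proof of Proposition~\ref{prop:Toeplitz_matrix_coefficient}. Starting from $B^\lambda f = d_\lambda\, f*|\Delta^\lambda|^2$ given by Proposition~\ref{prop:Berezin_convolution}, I expand
\[
   |\Delta^\lambda(g)|^2 = \langle \viot, (\siot)(g)\viot \rangle = \sum_{\rho \preceq \siot}\sum_{k=1}^{m(\rho,\siot)} \langle (\viot)^\rho_k, \rho(g)(\viot)^\rho_k \rangle,
\]
where the cross terms vanish because the summands in the decomposition of $\hpiot$ are orthogonal and $G$-stable. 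Substituting $g = y^{-1}x$ and integrating against $\pi_{u,v_0}(y)$ turns each term into an $L^2$-pairing of matrix coefficients; Schur's orthogonality relations kill every term with $\rho \not\cong \pi$ and yield
\[
   B^\lambda(\pi_{u,v_0})(x) = \frac{d_\lambda}{d_\pi}\sum_{k=1}^{m(\pi,\siot)} \langle u, \pi(x)(\viot)^\pi_k\rangle \langle (\viot)^\pi_k, v_0\rangle.
\]
After regrouping and invoking the convention $(v\otimes\overline{w})u = \langle u, w\rangle v$, this is exactly $\frac{d_\lambda}{d_\pi}\pi_{u,\cB^\lambda_\pi v_0}(x)$. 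An equivalent and slightly shorter route is to substitute $v = w = \sigma^\lambda(x)v_\lambda$ directly into Proposition~\ref{prop:Toeplitz_matrix_coefficient} and use the $G$-equivariance of the isotypic projections to replace $(v\otimes\overline{w})^\pi_k$ by $\pi(x)(\viot)^\pi_k$.

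The operator $\cB^\lambda_\pi$ is a sum of rank-one positive operators, hence positive, and manifestly sends $\cH_\pi$ into $\cK^\lambda_\pi$. For invertibility on $\cK^\lambda_\pi$, I compute $\langle \cB^\lambda_\pi v, v\rangle = \sum_{k=1}^{m(\pi,\siot)} |\langle v, (\viot)^\pi_k\rangle|^2$. If $v \in \cK^\lambda_\pi$ is nonzero, then, since the vectors $(\viot)^\pi_1, \dots, (\viot)^\pi_{m(\pi,\siot)}$ span $\cK^\lambda_\pi$, $v$ cannot be orthogonal to all of them, so the sum is strictly positive; hence $\cB^\lambda_\pi|_{\cK^\lambda_\pi}$ is positive definite and invertible.

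The properties of $B^\lambda$ on $\cK^\lambda$ now follow from the formula. That $B^\lambda$ maps $\cK^\lambda$ into itself is immediate since $\cB^\lambda_\pi v_0 \in \cK^\lambda_\pi$, and the intertwining property is clear from the right-convolution characterization. For positivity, Schur's orthogonality applied to the explicit matrix coefficients reduces $\langle B^\lambda(\pi_{u,v_0}), \pi_{u,v_0}\rangle_{L^2}$ to a positive multiple of $\langle \cB^\lambda_\pi v_0, v_0\rangle \geq 0$. Invertibility of $B^\lambda|_{\cK^\lambda}$ then decomposes through the $G$-isotypic structure and is governed on each component by $\cB^\lambda_\pi$. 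For the kernel: the derivation of the formula applies verbatim to any $v_0 \in \cH_\pi^L$ and yields $0$ whenever $v_0 \perp \cK^\lambda_\pi$ (since then $\cB^\lambda_\pi v_0 = 0$); moreover, the Schur step above shows $B^\lambda$ annihilates $L^2(G/L)^\pi$ entirely when $\pi \not\preceq \siot$. Combined with invertibility on $\cK^\lambda$, this identifies $\ker B^\lambda = (\cK^\lambda)^\perp$. The main obstacle throughout is the bookkeeping in the Schur computation: threading it through the fixed embeddings of each isotypic copy in $\hpiot$ and keeping the conjugations---arising both from the inner-product convention and from the antilinear dependence of $v\otimes\overline{w}$ on $w$---aligned so that the answer lands cleanly as $\pi_{u,\cB^\lambda_\pi v_0}$ rather than some twisted version.
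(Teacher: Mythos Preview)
Your proposal is correct and essentially matches the paper's argument. In fact, your ``slightly shorter route''---substituting $v=w=\sigma^\lambda(x)v_\lambda$ into Proposition~\ref{prop:Toeplitz_matrix_coefficient} and using the $G$-equivariance of the isotypic projections---is exactly the computation the paper carries out; your primary route via $B^\lambda f = d_\lambda\, f*|\Delta^\lambda|^2$ and Schur orthogonality amounts to re-deriving that proposition inline, so the two approaches differ only in packaging. Your treatment of the remaining assertions (invertibility via positive definiteness of $\cB^\lambda_\pi$ on $\cK^\lambda_\pi$, positivity, and the kernel identification) is, if anything, more detailed than what the paper gives.
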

\begin{proof}
   We note that:
   \begin{align*}
      B^\lambda (\pi_{u,v_0})(x) & =  \frac{d_{\lambda}}{d_{\pi}} \langle T_{\pi_{u,v_0}} \sigma^\lambda(x) v_\lambda , \sigma^\lambda(x) v_\lambda \rangle\\
                                  & =  \frac{d_{\lambda}}{d_{\pi}} \sum_{k=1}^{m(\pi,\siot)} \langle u , (\siox{x})^\pi_k \rangle \langle (\viot)^\pi_k, v_0  \rangle \\
                                  & =  \frac{d_{\lambda}}{d_{\pi}} \sum_{k=1}^{m(\pi,\siot)} \langle u, ((\siot)(x)\viot)^\pi_k \rangle \langle  (\viot)^\pi_k, v_0 \rangle \\
                                  & =  \frac{d_{\lambda}}{d_{\pi}} \sum_{k=1}^{m(\pi,\siot)} \langle u, \langle v_0, (\viot)^\pi_k \rangle \pi(x) (\viot)^\pi_k\rangle \\
                                  & = \frac{d_{\lambda}}{d_{\pi}} \left\langle u, \pi(x) \sum_{k=1}^{m(\pi,\siot)}   \langle v_0, (\viot)^\pi_k \rangle (\viot)^\pi_k \right\rangle.
   \end{align*}

We note that the operator $\cB^\lambda_\pi$ is invertible, because $(\viot)^\pi_1,\ldots, (\viot)^\pi_{d(\pi,\siot)}$ is a basis for $\cK^\lambda_\pi$.  It follows that $B^\lambda: \cK^\lambda \rightarrow \cK^\lambda$ is an invertible $G$-intertwining operator. 
\end{proof}
We immediately have the following identification of the image of $L^2(G/L)^\pi$ under the Berezin transform:
\begin{corollary}
   \label{cor:Berezin_transform_pi_isotypic_vectors}
   The Berezin transform sends $L^2(G/L)^\pi$ to $L^2(G/L)^\pi$, and
   \[
      B^\lambda(L^2(G/L)^\pi) = \Span\{\pi_{v,w}\mid v\in \cH_\pi, w\in \cK^\lambda_\pi \} \cong \cH_\pi \otimes \overline{\cK^\lambda_\pi}
    \]
\end{corollary}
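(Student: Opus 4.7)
The corollary is essentially a bookkeeping consequence of Proposition~\ref{prop:Berezin_matrix_coefficient_formula} together with Peter-Weyl theory, so my plan is to assemble the pieces carefully rather than to do any new computation.

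The first step will be to verify the containment $B^\lambda(L^2(G/L)^\pi) \subseteq L^2(G/L)^\pi$. For this I would observe that $B^\lambda f = d_\lambda\, f * |\Delta^\lambda|^2$ by Proposition~\ref{prop:Berezin_convolution}, and that right convolution with a fixed $L^1$ function on $G$ commutes with the left-regular action; hence $B^\lambda$ is a $G$-intertwining operator on $L^2(G/L)$ and in particular preserves each isotypic component.

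Next I would describe $L^2(G/L)^\pi$ explicitly: by Peter-Weyl (as recalled just before Proposition~\ref{prop:Berezin_matrix_coefficient_formula}),
\[
   L^2(G/L)^\pi = \Span\{ \pi_{u,v_0} \mid u\in\cH_\pi,\ v_0\in\cH_\pi^L \} \cong \cH_\pi \otimes \overline{\cH_\pi^L}.
\]
I would then split $\cH_\pi^L = \cK^\lambda_\pi \oplus (\cH_\pi^L \ominus \cK^\lambda_\pi)$ and treat the two pieces separately. For $v_0\in\cK^\lambda_\pi$, Proposition~\ref{prop:Berezin_matrix_coefficient_formula} gives $B^\lambda(\pi_{u,v_0}) = \tfrac{d_\lambda}{d_\pi}\pi_{u,\cB^\lambda_\pi v_0}$, and since that same proposition establishes that $\cB^\lambda_\pi$ is an invertible operator on $\cK^\lambda_\pi$, the image of the first summand under $B^\lambda$ is exactly $\Span\{\pi_{u,w}\mid u\in\cH_\pi,\ w\in\cK^\lambda_\pi\}$. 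For $v_0 \in \cH_\pi^L\ominus\cK^\lambda_\pi$, I would invoke the observation made just before Proposition~\ref{prop:kernel_of_meta_Toeplitz}, namely that $\pi_{u,v_0}\in \ker T^\lambda$; hence $T_{\pi_{u,v_0}} = 0$ and consequently $B^\lambda(\pi_{u,v_0}) = B^\lambda(T_{\pi_{u,v_0}}) = 0$.

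Combining these two cases shows that the image $B^\lambda(L^2(G/L)^\pi)$ coincides precisely with $\Span\{\pi_{u,w}\mid u\in\cH_\pi,\ w\in\cK^\lambda_\pi\}$, and the canonical identification of this span with $\cH_\pi\otimes\overline{\cK^\lambda_\pi}$ is just Peter-Weyl applied to the restricted space of right-matrix-coefficient vectors. There is really no serious obstacle here; the only subtlety worth flagging is the need to check that $B^\lambda$ kills the $\pi_{u,v_0}$ with $v_0\perp\cK^\lambda_\pi$, so that the image has no extra contribution beyond what comes from Proposition~\ref{prop:Berezin_matrix_coefficient_formula}, and this is immediate once one recalls the definition $B^\lambda f = B^\lambda(T_f)$ together with Proposition~\ref{prop:kernel_of_meta_Toeplitz}.
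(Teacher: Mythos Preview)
Your proposal is correct and follows essentially the same route the paper intends: the corollary is stated in the paper with no proof, as an immediate consequence of Proposition~\ref{prop:Berezin_matrix_coefficient_formula}, and your argument simply unpacks that implication. Note that you could shorten step~5 slightly by citing the final sentence of Proposition~\ref{prop:Berezin_matrix_coefficient_formula} directly (namely $\ker B^\lambda = (\cK^\lambda)^\perp$), which already dispatches the case $v_0\perp\cK^\lambda_\pi$ without going back through $\ker T^\lambda$.
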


In the introduction to this section, we remarked that the Berezin transform for operators is, in a certain sense, ``opposite'' to the Toeplitz quantization operator.  This statement is made precise by the next proposition: in fact, the Berezin transform for functions (which takes the Berezin transform of the Toeplitz operator with a given symbol) is a multiple of $(T^\lambda)^* T^\lambda$.

\begin{proposition}
   \label{prop:Toeplitz_product_trace_Berezin_transform}
   Let $f,g \in L^2(G/L)$.  Then 
   \[
      \Tr(T_f T_g^*) = \Tr(T_f T_{\overline{g}}) = d_{\lambda} \langle B^\lambda f, g\rangle.
   \]
   It follows that, in terms of the linear operator $T^\lambda: L^2(G/L) \rightarrow \Hom(\cA^\lambda)$, the Berezin transform can be written as:
   \[
      B^\lambda = \frac{1}{d_{\lambda}} (T^\lambda)^* T^\lambda,
   \]
   where $\Hom(\cA^\lambda)$ is given the standard Hilbert-Schmidt inner product. 

   Furthermore, 
   \[
      \Tr B^\lambda  = d_{\lambda} \text{ and thus } \Tr( (T^\lambda)^* T^\lambda) = (d_{\lambda})^2.
   \]
\end{proposition}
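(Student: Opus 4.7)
The plan is to reduce everything to the central identity
\[
\Tr(T_f T_{\overline g}) = d_\lambda \langle B^\lambda f, g\rangle,
\]
and to deduce the remaining statements as formal consequences. The first equality $\Tr(T_f T_g^*) = \Tr(T_f T_{\overline g})$ is immediate from Lemma~\ref{lem:toeplitz_basic_properties}. For the second, I would note that $\Tr(T_f T_g^*)$ is the Hilbert--Schmidt inner product $\langle T_f, T_g\rangle_{\mathrm{HS}}$, and that under the standard identification $\End(\cH_\lambda) \cong \hpiot$ this corresponds to the natural tensor product inner product, for which $\langle v\otimes\overline{w}, v'\otimes\overline{w'}\rangle = \langle v,v'\rangle \langle w',w\rangle$. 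Expanding both operators via Lemma~\ref{lem:toeplitz_as_integrated_rep}, applying Fubini, and using $|\langle \sigma^\lambda(x) v_\lambda, \sigma^\lambda(y) v_\lambda\rangle|^2 = |\Delta^\lambda(x^{-1}y)|^2$ yields
\[
\langle T_f, T_g\rangle_{\mathrm{HS}} = d_\lambda^2 \int_G \int_G f(x)\overline{g(y)}\, |\Delta^\lambda(x^{-1}y)|^2 \, dx\, dy.
\]
The inner $x$-integral is precisely $(f * |\Delta^\lambda|^2)(y)$, which by Proposition~\ref{prop:Berezin_convolution} equals $d_\lambda^{-1} B^\lambda f(y)$; integrating against $\overline{g}$ then gives $d_\lambda \langle B^\lambda f, g\rangle$.

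The identification $B^\lambda = d_\lambda^{-1} (T^\lambda)^* T^\lambda$ follows at once from the identity just established, since $\langle T^\lambda f, T^\lambda g\rangle_{\mathrm{HS}} = \langle (T^\lambda)^* T^\lambda f, g\rangle_{L^2}$ characterizes the adjoint. For $\Tr B^\lambda$, I would use Proposition~\ref{prop:Berezin_convolution} to realize $B^\lambda$ on $L^2(G/L)$ as the integral operator with kernel $K(xL,yL) = d_\lambda |\Delta^\lambda(y^{-1}x)|^2$; this descends to a well-defined function on $G/L \times G/L$ by the $L$-bi-invariance of $|\Delta^\lambda|^2$ (Corollary~\ref{cor:absolute_value_equals_one}). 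Since its image is contained in the finite-dimensional space $\cK^\lambda$ (Proposition~\ref{prop:Berezin_matrix_coefficient_formula}), $B^\lambda$ is trace-class and its trace is the diagonal integral
\[
\Tr B^\lambda = \int_{G/L} K(xL,xL)\, dx = \int_{G/L} d_\lambda |\Delta^\lambda(e)|^2\, dx = d_\lambda,
\]
using $\Delta^\lambda(e) = \langle v_\lambda, v_\lambda\rangle = 1$ and the normalization of Haar measure on $G/L$. Combining, $\Tr((T^\lambda)^* T^\lambda) = d_\lambda \Tr B^\lambda = d_\lambda^2$.

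The only substantive computation is the Fubini argument in the first step, which is routine given the formulas already established; I do not anticipate a real obstacle. The main point of care is verifying that the kernel $d_\lambda |\Delta^\lambda(y^{-1}x)|^2$ descends to $G/L \times G/L$, which is exactly the content of Corollary~\ref{cor:absolute_value_equals_one}. As an alternative route, $\Tr B^\lambda = d_\lambda$ can be read off directly from Proposition~\ref{prop:Berezin_matrix_coefficient_formula}: summing $\Tr(B^\lambda|_{L^2(G/L)^\pi}) = d_\lambda \Tr(\cB_\pi^\lambda)$ over $\pi \preceq \siot$ gives $d_\lambda \sum_{\pi,k} \|(\viot)_k^\pi\|^2 = d_\lambda \|\viot\|^2 = d_\lambda$, providing an internal consistency check.
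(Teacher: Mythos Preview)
Your proposal is correct and follows essentially the same route as the paper: both reduce the identity to the double integral $d_\lambda^2 \iint f(x)\overline{g(y)}\,|\Delta^\lambda(x^{-1}y)|^2\,dx\,dy$, recognize the convolution $f*|\Delta^\lambda|^2$, and compute $\Tr B^\lambda$ as the diagonal integral of its continuous kernel. The only organizational difference is that the paper reaches the double integral by writing out the integral kernel of the composed operator $T_f T_{\overline g}$ on $\cA^\lambda$ and integrating along the diagonal (first for continuous $g$, then by density), whereas you compute $\langle T_f,T_g\rangle_{\mathrm{HS}}$ directly from the rank-one expression in Lemma~\ref{lem:toeplitz_as_integrated_rep}; your path is marginally cleaner since it avoids the density step, and your alternative verification of $\Tr B^\lambda$ via Proposition~\ref{prop:Berezin_matrix_coefficient_formula} is a pleasant consistency check not present in the paper.
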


\begin{proof}
   We see that for all $\phi\in\cA^\lambda$ and $x\in G$, 
   \begin{align*}
      (T_fT_{\overline{g}}) \phi(x) & = (d_{\lambda})^2 \int_G f(y) \int_G \overline{g}(z) \phi(z) \overline{\langle \sigma^\lambda(y)v_\lambda, \sigma^\lambda(z) v_\lambda \rangle} dz \, \overline{\langle \sigma^\lambda(x)v_\lambda, \sigma^\lambda(y)v_\lambda \rangle} dy \\
         & = (d_{\lambda})^2\int_G \phi(z) \left(\overline{g}(z)\int_G f(y)  \langle \sigma^\lambda(z)v_\lambda, \sigma^\lambda(y) v_\lambda \rangle \langle \sigma^\lambda(y) v_\lambda, \sigma^\lambda(x)v_\lambda \rangle dy \right) dz.
   \end{align*}
   If $g$ is continuous, it follows that the expression in parentheses (as a function of $x$ and $z$) gives a continuous integral kernel for the operator $T_f T_{\overline{g}}$, and thus its trace is given by the integral along its diagonal:
   \begin{align*}
      \Tr(T_fT_{\overline{g}}) & = (d_{\lambda})^2 \int_G \overline{g}(z) \int_G f(y) |\langle v_\lambda, \sigma^\lambda(z^{-1}y) v_\lambda \rangle |^2 dy \, dz \\
                               & = (d_{\lambda})^2\langle f * |\Delta^\lambda|^2,g \rangle = d_{\lambda} \langle B^\lambda f,g \rangle.
   \end{align*}
   The extension to arbitrary $g\in L^2(G/L)$ follows from the density of continuous functions in $L^2(G/L)$.
   The first part of the result then follows from the fact that $B^\lambda$ is self-adjoint.  

   Next if $f,g\in L^2(G/L)$, we see that
   \[
      \langle T^\lambda f, T^\lambda g \rangle = \Tr(T_f T_{\overline{g}}) = d_{\lambda} \langle B^\lambda f,g \rangle,
   \]
   which proves the second part of the result.

   Finally, we see that $B^\lambda$ is a finite-rank operator and that:
   \[
      B^\lambda f(x) = d_{\lambda} \int_G f(y) |\Delta^\lambda(y^{-1}x)|^2 \, dy
   \]
   for almost all $x\in G$.  Once again using that $B^\lambda$ is a finite-rank (and thus trace-class) operator with continuous integral kernel, we see that its trace is the diagonal integral of the kernel:
   \[
      \Tr B^\lambda = d_{\lambda} \int_G |\Delta^\lambda(x^{-1}x)|^2 dx = d_{\lambda}\int_G 1 \,dx = d_{\lambda}. \qedhere
   \]
\end{proof}

For each $\lambda=\lambda_1 \omega_1 + \cdots +\lambda_r \omega_r + \chi \in\Lambda^+$ (where $\chi\in \widehat{Z(G)}$), we define the \textbf{support} of $\lambda$ to be the set:
\[
   \supp \lambda := \{\alpha_i\in\Pi \mid \lambda_i \neq 0 \} \subseteq \Pi.
\]
Furthermore, for any subset $S\subseteq\pi$, we write:
\[
   \min_S \lambda := \min\{\lambda_i \mid \alpha_i\in S\}, \max_S \lambda := \max\{\lambda_i \mid \alpha_i \in S\}.
\]

Since $B^\lambda$ is given by convolution with the $d_\lambda |\Delta^\lambda|^2$, we refer to $d_\lambda |\Delta^\lambda|^2$ as the \textbf{Berezin kernel} for $\sigma^\lambda$. The following Lemma shows that the Berezin kernel is an idempotent for the convolution algebra $L^1(G/L)$. 

\begin{lemma}
   \label{lem:idempotent_convolution}
   For each $\lambda\in \Lambda^+$, one has that
   \[
      \Delta^\lambda * \Delta^\lambda = \frac{1}{d_\lambda} \Delta^\lambda
   \]
\end{lemma}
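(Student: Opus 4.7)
The plan is to recognize the identity $\Delta^\lambda * \Delta^\lambda = \frac{1}{d_\lambda}\Delta^\lambda$ as a direct consequence of Schur's orthogonality relations applied to the irreducible unitary representation $\sigma^\lambda$. Recall $\Delta^\lambda(g) = \sigma^\lambda_{v_\lambda, v_\lambda}(g) = \langle v_\lambda, \sigma^\lambda(g)v_\lambda\rangle$. Using the convolution formula
\[
   (\Delta^\lambda * \Delta^\lambda)(x) = \int_G \Delta^\lambda(y)\,\Delta^\lambda(y^{-1}x)\, dy,
\]
the first step is to rewrite the second factor using unitarity of $\sigma^\lambda$:
\[
   \Delta^\lambda(y^{-1}x) = \langle v_\lambda, \sigma^\lambda(y)^{-1}\sigma^\lambda(x)v_\lambda\rangle = \langle \sigma^\lambda(y)v_\lambda, \sigma^\lambda(x)v_\lambda\rangle.
\]

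The second step is to bring the integrand into the standard Schur-orthogonality form by applying conjugate symmetry of the inner product: since $\langle \sigma^\lambda(y)v_\lambda, \sigma^\lambda(x)v_\lambda\rangle = \overline{\langle \sigma^\lambda(x)v_\lambda, \sigma^\lambda(y)v_\lambda\rangle}$, we obtain
\[
   (\Delta^\lambda * \Delta^\lambda)(x) = \int_G \langle v_\lambda, \sigma^\lambda(y)v_\lambda\rangle \,\overline{\langle \sigma^\lambda(x)v_\lambda, \sigma^\lambda(y)v_\lambda\rangle}\, dy.
\]
This is exactly the integral governed by Schur orthogonality for the irreducible representation $(\sigma^\lambda, \cH_\lambda)$, namely
\[
   \int_G \langle u_1, \sigma^\lambda(y)v_1\rangle\,\overline{\langle u_2, \sigma^\lambda(y)v_2\rangle}\, dy = \frac{1}{d_\lambda}\langle u_1, u_2\rangle\,\overline{\langle v_1, v_2\rangle},
\]
applied with $u_1 = v_\lambda$, $v_1 = v_\lambda$, $u_2 = \sigma^\lambda(x)v_\lambda$, $v_2 = v_\lambda$.

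The third step is to evaluate: Schur's relation yields
\[
   (\Delta^\lambda * \Delta^\lambda)(x) = \frac{1}{d_\lambda}\langle v_\lambda, \sigma^\lambda(x)v_\lambda\rangle \cdot \overline{\langle v_\lambda, v_\lambda\rangle} = \frac{1}{d_\lambda}\Delta^\lambda(x),
\]
where we use that $v_\lambda$ is normalized. Since $x \in G$ was arbitrary, the claimed identity follows. There is no real obstacle to overcome; the entire content is bookkeeping of conjugations so that Schur orthogonality applies cleanly. One could alternatively derive the identity from the general convolution formula $\sigma^\lambda_{w_1,v_1} * \sigma^\lambda_{w_2, v_2} = \frac{\langle w_2, v_1\rangle}{d_\lambda}\sigma^\lambda_{w_1, v_2}$ specialized to $w_1 = v_1 = w_2 = v_2 = v_\lambda$, but the direct Schur computation above is the shortest path.
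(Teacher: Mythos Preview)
Your proof is correct and follows essentially the same approach as the paper: both write out the convolution, rewrite $\Delta^\lambda(y^{-1}x)$ using unitarity and conjugate symmetry, and then apply the Schur orthogonality relations directly. Your version is slightly more explicit about the bookkeeping of the conjugations, but the argument is identical in substance.
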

\begin{proof}
   We note that for all $x\in G$,
   \begin{align*}
      \int_G \Delta^{\lambda}(y)\Delta^{\lambda}(y^{-1}x) dy 
           & = \int_{G} \langle v_\lambda, \sigma^\lambda(y) v_\lambda \rangle \langle v_\lambda, \sigma^\lambda(y^{-1}x) v_\lambda \rangle dy \\
           & = \int_G \langle v_\lambda, \sigma^\lambda(y)v_\lambda \rangle \overline{\langle \sigma^\lambda(x)v_\lambda, v_\lambda} \rangle dy \\
           & = \frac{1}{d_\lambda} \langle v_\lambda, \sigma^\lambda(x) v_\lambda \rangle = \frac{1}{d_\lambda} \Delta^\lambda(x)
   \end{align*}
   by the Schur Orthogonality Relations.
\end{proof}

\section{The Berezin Transform and Approximate Identities}
\label{sec:approximate_identity}
In this section, we prove that when a sequence $\{\lambda_n\}_{n\in\N} \in\Lambda^+$ of highest weights satisfies certain growth conditions, the Berezin transform $B^{\lambda_n}$ converges to the identity on $L^2(G/T)$ in the strong operator topology.  Furthermore, we prove some consequences of this result, which include a Szegő limit-type theorem.
\begin{theorem}
   \label{thm:approx_identity}
   Fix a subset $S\subseteq \Pi$.  Then, in the notation and terminology of Section~\ref{sec:parabolic_subgroups}, we consider the corresponding parabolic subgroup $P$ and the group $L = P\cap G$.   Recall that a weight $\lambda\in i\gt^*$ belongs to $i(\gt\ominus\gt_{S})^*$ if and only if $\supp \lambda \subseteq \Pi\backslash S$.

   Now let $\{\lambda_n\}_{n\in\N} \subseteq \Lambda^+\cap i(\gt\ominus\gt_{S})^*$ be a sequence such that  such that for all $C>0$, 
   \[
      \min_{\Pi\backslash S} \lambda_n \geq C\log n
   \]
   and such that there exists $D>0$ such that
   \[
      \max_{\Pi\backslash S} \lambda_n \leq n^D
   \]
   for all sufficiently large $n\in\N$ (that is, the minimum coefficient of $\lambda_n$ over its support grows superlogarithmically and its maximum coefficient grows at most polynomially). 

   \begin{enumerate}
      \item The functions
      \begin{align*}
         |\Delta^{\lambda_n}|^2:=G/L & \rightarrow \C \\
         y & \mapsto d_{\lambda_n} |\Delta^{\lambda_n}(y)|^2 
      \end{align*}
      form an approximate identity as $n\rightarrow \infty$. 
   
      \item If, in addition, the representation $\sigma^{\lambda_n}$ is self-conjugate (that is, equivalent to its contragredient) for all $n\in\N$, then, for each fixed $k\in\N$, the functions
      \begin{align*}
         h_k^{\lambda_n}:G/L \times \cdots \times G/L & \rightarrow \C \\
         (y_1, \ldots, y_k) & \mapsto (d_{\lambda_n})^k\Delta^{\lambda_n}(y_1) \Delta^{\lambda_n}(y_1^{-1}y_2) \cdots \Delta^{\lambda_n}(y_{k-1}^{-1}y_k) \Delta^{\lambda_n}(y_k^{-1})
      \end{align*}
      form an approximate identity as $n\rightarrow \infty$. (Note that $h_1 = |\Delta^{\lambda_n}|^2$.)

   \end{enumerate}
\end{theorem}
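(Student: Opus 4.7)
The approach is to combine a multiplicative factorization of $\Delta^{\lambda_n}$ into the fundamental conical functions $\Delta_i$ (via Lemma~\ref{lem:conical_tensor_prod}) with a compactness/uniform-decay estimate off any neighborhood of $L$, and with the Weyl dimension formula, which gives polynomial control of $d_{\lambda_n}$. The superlogarithmic vs.\ polynomial growth hypothesis is exactly calibrated to make the decay of $|\Delta^{\lambda_n}|^2$ off $L$ beat the growth of $d_{\lambda_n}$.

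For part~(1), note first that $\int_G d_{\lambda_n}|\Delta^{\lambda_n}|^2\,dg = 1$ (Schur orthogonality) and that $d_{\lambda_n}|\Delta^{\lambda_n}|^2 \geq 0$; so the assertion reduces to showing that for every right-$L$-invariant open neighborhood $V$ of $L$ in $G$,
\[
d_{\lambda_n}\int_{G\backslash V} |\Delta^{\lambda_n}(g)|^2\,dg \longrightarrow 0.
\]
Since $\supp\lambda_n \subseteq \Pi\backslash S$, Lemma~\ref{lem:conical_tensor_prod} gives $|\Delta^{\lambda_n}(g)|^2 = \prod_{\alpha_i\in\Pi\backslash S} |\Delta_i(g)|^{2(\lambda_n)_i}$. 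By Corollary~\ref{cor:absolute_value_equals_one}, $|\Delta_i(g)|=1$ exactly on the maximal parabolic subgroup $L_i$ corresponding to $\{\alpha_i\}$, and a check at the level of root spaces shows $L = \bigcap_{\alpha_i\in\Pi\backslash S} L_i$. Consequently, the continuous function $g\mapsto \max_{\alpha_i\in\Pi\backslash S}(1-|\Delta_i(g)|^2)$ is strictly positive on the compact set $G\backslash V$, hence bounded below by some $\varepsilon>0$, giving the uniform estimate $|\Delta^{\lambda_n}(g)|^2 \leq (1-\varepsilon)^{\min_{\Pi\backslash S}\lambda_n}$ on $G\backslash V$. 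The Weyl dimension formula then yields $d_{\lambda_n} = O(n^{D'})$ for some $D'>0$, and choosing $C$ with $C|\log(1-\varepsilon)| > D'$ (permissible since $C$ is arbitrary) gives $d_{\lambda_n}(1-\varepsilon)^{\min_{\Pi\backslash S}\lambda_n} \to 0$, completing part~(1).

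For part~(2), the strategy is parallel but needs two further ingredients. First, the normalization $\int_{(G/L)^k} h_k^{\lambda_n}\,d\vec{y} = 1$ is established inductively: the substitution $y_k' = y_{k-1}^{-1}y_k$ together with Lemma~\ref{lem:idempotent_convolution} yields
\[
\int_G \Delta^{\lambda_n}(y_{k-1}^{-1}y_k)\Delta^{\lambda_n}(y_k^{-1})\,dy_k = \tfrac{1}{d_{\lambda_n}}\Delta^{\lambda_n}(y_{k-1}^{-1}),
\]
so $\int_G h_k^{\lambda_n}\,dy_k = h_{k-1}^{\lambda_n}$, and iteration gives the normalization. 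Second, the concentration estimate of part~(1) extends pointwise to $|h_k^{\lambda_n}|$: writing $u_0 = y_1$, $u_i = y_i^{-1}y_{i+1}$ for $1\leq i\leq k-1$, and $u_k = y_k^{-1}$, on the complement of a small neighborhood of the base point in $(G/L)^k$ at least one $u_i$ lies outside a fixed neighborhood of $L$, so at least one of the $k+1$ factors $|\Delta^{\lambda_n}(u_i)|$ satisfies the bound from part~(1), while the remaining $k$ factors are each bounded by $1$. Together with the polynomial growth of $d_{\lambda_n}^k$, this gives pointwise super-polynomial decay of $|h_k^{\lambda_n}|$ off the base point.

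The hard part, and the only place the self-conjugacy hypothesis enters, is controlling $\int_{(G/L)^k}|h_k^{\lambda_n}|\,d\vec{y}$: this is not automatic from $\int h_k^{\lambda_n}=1$ because $h_k^{\lambda_n}$ is a product of $k+1$ matrix coefficients and is complex-valued in general. I expect that self-conjugacy is used to exhibit $h_k^{\lambda_n}$ as a non-negative function, so that $|h_k^{\lambda_n}| = h_k^{\lambda_n}$ and the $L^1$ norm is automatically equal to $1$. The natural mechanism is the following: if $\sigma^{\lambda_n}\cong\overline{\sigma^{\lambda_n}}$ via an antiunitary intertwiner $J$, then $Jv_{\lambda_n}$ is a lowest-weight vector, and the chain of matrix coefficients defining $h_k^{\lambda_n}$ should be re-expressible (by inserting $J$'s and using $G$-invariance) as $|F_k(y_1,\ldots,y_k)|^2$ for an explicit function $F_k$ built from matrix coefficients of $\sigma^{\lambda_n}$ between $v_{\lambda_n}$ and $Jv_{\lambda_n}$. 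Once non-negativity is secured, the pointwise concentration estimate together with the normalization gives the $L^1$ approximate identity property on $(G/L)^k$, as desired.
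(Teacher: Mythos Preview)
Your argument for part~(1) is correct and essentially identical to the paper's: Schur orthogonality for the normalization, the factorization $|\Delta^{\lambda_n}| = \prod_i |\Delta_i|^{(\lambda_n)_i}$ together with Corollary~\ref{cor:absolute_value_equals_one} and compactness for the off-diagonal decay, and the Weyl dimension formula for the polynomial bound on $d_{\lambda_n}$. The paper phrases the decay estimate in terms of the single auxiliary weight $\mu = \sum_{\alpha_i\in\Pi\backslash S}\omega_i$ and the bound $|\Delta^\lambda|\le |\Delta^\mu|^{m_\lambda}$, but this is equivalent to your formulation. Your treatment of the normalization and the concentration estimate in part~(2) likewise matches the paper.

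The gap is in the last step of part~(2). Your expectation that self-conjugacy forces $h_k^{\lambda_n}\ge 0$ via an expression $h_k^{\lambda_n}=|F_k|^2$ is not what the paper does, and there is no reason to believe it holds for $k\ge 2$: already $h_2^{\lambda} = d_\lambda^2\,\Delta^\lambda(y_1)\Delta^\lambda(y_1^{-1}y_2)\Delta^\lambda(y_2^{-1})$ is a product of three real numbers and can be negative. The paper's use of self-conjugacy is more modest: it only gives that $\Delta^{\lambda_n}$, and hence $h_k^{\lambda_n}$, is \emph{real-valued}. One then writes $|h_k^{\lambda_n}| = h_k^{\lambda_n} + 2(h_k^{\lambda_n})^-$ with $(h_k^{\lambda_n})^- = \max(0,-h_k^{\lambda_n})$ the negative part. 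Since $h_k^{\lambda_n}(e,\dots,e) = d_{\lambda_n}^k > 0$, the negative part vanishes on a neighborhood $W$ of the identity, while on $W^c$ the already-proved concentration estimate gives $|(h_k^{\lambda_n})^-| \le |h_k^{\lambda_n}| \to 0$ uniformly. Hence $\|(h_k^{\lambda_n})^-\|_\infty\to 0$, so $\|(h_k^{\lambda_n})^-\|_1\to 0$, and therefore $\|h_k^{\lambda_n}\|_1 = \int h_k^{\lambda_n} + 2\|(h_k^{\lambda_n})^-\|_1 \to 1$. This is the missing mechanism you should substitute for the speculative $|F_k|^2$ argument.
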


\begin{observation}
   It may be possible prove a version of (2) without the assumption that $\sigma^{\lambda_n}$ is self-conjugate for all $n\in\N$.  One needs to prove that the $L^1$ norms of $h^{\lambda_n}_k$ are uniformly bounded.  The main obstruction is that the obvious Cauchy-Schwartz type estimate for the $L^1$ norm of $h^{\lambda_n}_k$ gives 
   \[
      ||h^{\lambda_n}_k||_1 \leq (d_{\lambda_n})^{(k-1)/2},
   \]
   which still grows to infinity as $n\rightarrow\infty$ if $k>1$.
\end{observation}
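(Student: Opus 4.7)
The plan is to derive the bound $\|h_k^{\lambda_n}\|_1 \le d_{\lambda_n}^{(k-1)/2}$ by a direct pairing argument on the $k+1$ conical factors. Introduce the conventions $y_0 = y_{k+1} = e$, so that
\[
   \|h_k^{\lambda_n}\|_1 = d_{\lambda_n}^k \int_{G^k} \prod_{i=0}^{k} |\Delta^{\lambda_n}(y_i^{-1} y_{i+1})| \, dy_1 \cdots dy_k,
\]
and note that each integration variable $y_j$ ($1 \le j \le k$) appears in exactly the two consecutive factors indexed by $i = j-1$ and $i = j$. From Schur orthogonality (as used implicitly in Proposition~\ref{prop:Berezin_convolution} and Lemma~\ref{lem:idempotent_convolution}) one has $\|\Delta^{\lambda_n}\|_2^2 = 1/d_{\lambda_n}$, and since Haar measure on $G$ is normalized, Cauchy--Schwartz against the constant function $1$ yields $\|\Delta^{\lambda_n}\|_1 \le d_{\lambda_n}^{-1/2}$.

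The atomic step is the following application of Cauchy--Schwartz combined with the bi-invariance of Haar measure:
\[
   \int_G |\Delta^{\lambda_n}(a^{-1} y)| \, |\Delta^{\lambda_n}(y^{-1} b)| \, dy \le \|\Delta^{\lambda_n}\|_2^2 = d_{\lambda_n}^{-1},
\]
uniformly in $a, b \in G$. I would then partition the $k+1$ factors into consecutive pairs $(f_{2j}, f_{2j+1})$, where $f_i := |\Delta^{\lambda_n}(y_i^{-1} y_{i+1})|$; each pair shares exactly the variable $y_{2j+1}$, which is integrated out via the atomic step at a cost of $d_{\lambda_n}^{-1}$ independently of the remaining variables. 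The even-index variables $y_{2j}$ ($j \ge 1$) do not appear in the surviving integrand and contribute a factor of $1$ each. When $k$ is even the factor $f_k = |\Delta^{\lambda_n}(y_k^{-1})|$ is left over and is integrated using $\|\Delta^{\lambda_n}\|_1 \le d_{\lambda_n}^{-1/2}$. Handling the two parities separately ($k$ odd gives $(k+1)/2$ pairs; $k$ even gives $k/2$ pairs plus the leftover), one checks in both cases that the full $k$-fold integral is bounded by $d_{\lambda_n}^{-(k+1)/2}$, whence
\[
   \|h_k^{\lambda_n}\|_1 \le d_{\lambda_n}^k \cdot d_{\lambda_n}^{-(k+1)/2} = d_{\lambda_n}^{(k-1)/2}.
\]

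To justify the Observation's claim that this estimate is useless for $k > 1$, I would combine the growth hypothesis $\min_{\Pi\setminus S}\lambda_n \to \infty$ with the Weyl dimension formula to conclude $d_{\lambda_n} \to \infty$, so that the right-hand side diverges for every $k \ge 2$. The essential obstruction, as the Observation indicates, is that the pairing argument discards all phase information by replacing each $\Delta^{\lambda_n}$ with its modulus, so no cancellation among the conical factors is exploited. Any uniform bound on $\|h_k^{\lambda_n}\|_1$ without the self-conjugacy hypothesis would instead have to work with the signed (complex) integrand directly, presumably through a representation-theoretic analysis of iterated convolutions of $\Delta^{\lambda_n}$ with itself and their decomposition along the irreducible components of $\sigma^{\lambda_n} \otimes \overline{\sigma^{\lambda_n}}$; identifying the correct cancellation mechanism is the main difficulty, and the reason the authors fall back on self-conjugacy is that in that case $\Delta^{\lambda_n}(y^{-1}) = \overline{\Delta^{\lambda_n}(y)}$ aligns the phases of the first and last factors with the interior ones in a way that permits a cleaner representation-theoretic reduction.
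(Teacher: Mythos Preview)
The paper states this Observation without proof, merely alluding to ``the obvious Cauchy--Schwartz type estimate.''  Your pairing argument correctly supplies the details: the atomic step $\int_G |\Delta^{\lambda_n}(a^{-1}y)|\,|\Delta^{\lambda_n}(y^{-1}b)|\,dy \le \|\Delta^{\lambda_n}\|_2^2 = d_{\lambda_n}^{-1}$ is exactly the right tool, and your parity-by-parity bookkeeping gives the claimed bound $d_{\lambda_n}^{(k-1)/2}$.  There is nothing to compare against in the paper itself.

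One correction to your final paragraph, which is tangential to the estimate but worth noting: the identity $\Delta^{\lambda_n}(y^{-1}) = \overline{\Delta^{\lambda_n}(y)}$ holds for \emph{every} $\lambda$, not just for self-conjugate representations, since by unitarity $\langle v_\lambda,\sigma^\lambda(y^{-1})v_\lambda\rangle = \langle \sigma^\lambda(y)v_\lambda,v_\lambda\rangle = \overline{\langle v_\lambda,\sigma^\lambda(y)v_\lambda\rangle}$.  So this is not what self-conjugacy buys.  What the paper actually extracts from self-conjugacy (in the proof of Theorem~\ref{thm:approx_identity}) is the stronger claim that $\Delta^{\lambda_n}$, and hence $h_k^{\lambda_n}$, is \emph{real-valued}; this allows one to write $|h_k^{\lambda_n}| = h_k^{\lambda_n} + 2(h_k^{\lambda_n})^-$ and control the negative part via the uniform-vanishing-off-the-diagonal argument, thereby bounding $\|h_k^{\lambda_n}\|_1$ without any Cauchy--Schwartz loss.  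Your speculation about ``phase alignment'' and iterated convolutions is a reasonable heuristic for what a proof without self-conjugacy might require, but it is not the mechanism the paper uses.
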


\begin{proof}
   We first show that $|\Delta^{\lambda_n}|^2$ forms an approximate identity as $n\rightarrow\infty$.  Let $\lambda\in \Lambda^+\cap i (\gt\ominus\gt_S)^*$.  We start by noting that $|\Delta^\lambda|^2$ has unit integral over $G/L$ (as usual, we identity functions on $G/L$ with right $L$-invariant functions on $G$):
   \begin{align*}
      \int_G d_\lambda \Delta^{\lambda}(y) \Delta^{\lambda}(y^{-1}) dy 
           & = d_\lambda \int_G \langle v_{\lambda}, \sigma^\lambda(y) v_{\lambda} \rangle \overline{ \langle v_{\lambda}, \sigma^\lambda(y) v_\lambda \rangle } dy \\
           & = ||v_\lambda||^2 ||v_\lambda||^2 = 1 
   \end{align*}
   by the Schur orthogonality relations. 

   We write $\widetilde{S}^c = \{ 1\leq i \leq n \mid \alpha_i\in \Pi \backslash S\}$.  Now fix $\mu = \sum_{i\in\widetilde{S}^c } \omega_i$. By Lemma~\ref{lem:conical_tensor_prod}, we see that 
   $\Delta^\mu = \prod_{i\in\widetilde{S}^c} \Delta_i$, where we remind the reader that $\Delta_i := \Delta^{\omega_i}$.  Finally, by Corollary~\ref{cor:absolute_value_equals_one}, combined with the continuity of $\Delta^\mu$ and the compactness of $G/L$, one has that for each open neighborhood $V$ of $eL$ in $G/L$, we see that there is $\delta <1$ such that 
   \[
      \sup_{yL \in (G/L) \backslash V} |\Delta^\mu(y)| \leq \delta 
   \]
   Note that if $\lambda\in \Lambda^+\cap i (\gt\ominus\gt_S)^*$ and $\lambda = \lambda_1\omega_1 + \cdots \lambda_r \omega_r+\omega_Z$, then 
   \[
      \Delta^\lambda(y) = \omega_Z(y)\prod_{i=1}^r (\Delta_i)^{\lambda_i}(y) = \prod_{i\in \widetilde{S}^c} (\Delta_i)^{\lambda_i}(y)
   \]
   and thus
   \[
      |\Delta^\lambda(y)| \leq |\Delta^\mu(y)|^{m_\lambda} \leq \delta^m,
   \]
   where $m_\lambda = \min_{\Pi \backslash S}\lambda$. Next, the famous Weyl dimension formula tells us that
   \[
      d_\lambda = \frac{\prod_{\alpha\in\Delta^+} \langle \lambda + \rho, \alpha \rangle}
      {\prod_{\alpha\in\Delta^+}\langle \rho, \alpha \rangle} 
   \]
   is a polynomial of degree $\#\Delta^+$ (the number of positive roots in $\Delta^+(\gg_\C, \gt_\C)$) in the entries of $\lambda$.  In particular, if $M_\lambda=\max_{\Pi\backslash S} \lambda_n$, then:
   \[
      d_\lambda \leq M_\lambda^{\# \Delta^+} 
   \]

   We now consider a sequence $\{\lambda_n\}_{n\in\N} \subseteq \Lambda^+\cap i(\gt\ominus\gt_{S})^*$ as in the hypothesis to the lemma, in which for all $C>0$ there exists $N\in \N$ such that
   \[
      \min_{\Pi\backslash S} \lambda_n \geq C\log n
   \]
   for all $n\geq N$.  We then have, for all $xL \in (G/L) \backslash V$, and for all $n\in \N$ sufficiently large, that:
   \begin{align*}
      d_{\lambda_n} |\Delta^\lambda(x)|^2 & \leq d_{\lambda_n} \delta^{2C\log n} \\
                      & = d_{\lambda_n} n^{2C \log \delta}  \\
                      & \leq n^{(\#\Delta^+) D} n^{2C \log \delta},
   \end{align*}
   where we have chosen $D$ such that $\max_{\Pi\backslash S} \lambda_n \leq n^D$ for sufficiently large $n\in \N$.  Since $0 < \delta <1$, we see that $\log \delta<0$.  It is sufficient to choose $C>0$ large enough such that $(\#\Delta^+)D - 2C \log \delta <0$, in which case one sees that $d_{\lambda_n} n^{2C\log \delta} \rightarrow 0$ as $n\rightarrow\infty$. 

   We now prove part (2) of the theorem.  Let $\lambda\in \Lambda^+\cap i (\gt\ominus\gt_S)^*$.  We first show that $h^k_\lambda$ is indeed right-$L$ invariant in each of its $k$ input variables.  Recall that 
   \[
      \Delta^\lambda(l_1 x l_2) = \langle \sigma^\lambda(l_1^{-1})v_\lambda, \sigma^\lambda(x) \sigma^\lambda(l_2)v_\lambda\rangle = \chi_\lambda(l_1^{-1}l_2^{-1}) 
   \]
   for all $x\in G$ and $l_1,l_2 \in L$.  We note that for all $y_1,\ldots,y_k\in G$ and $l_1,\ldots,l_n\in L$, 
   \begin{align*}
      h_k^\lambda(y_1 l_1, \ldots y_k l_k) 
                   & = (d_{\lambda})^k \Delta^{\lambda}(y_1l_1)\Delta^{\lambda}(l_1^{-1}y_1y_2l_2) \cdots \Delta^{\lambda}(l_{k-1}^{-1}y_{k-1}^{-1}y_kl_k) \Delta(l_k^{-1}y_k^{-1}) \\
                   & = d_{\lambda}^k \chi_\lambda(l_1l_1^{-1} \cdots l_kl_k^{-1}) \Delta^{\lambda}(y_1)\Delta^\lambda(y_1^{-1}y_2)\cdots \Delta^\lambda(y_{k-1}^{-1}y_k) \Delta^\lambda (y_k^{-1}) \\
                   & = h_k^\lambda(y_1,\ldots, y_k).
   \end{align*}

   Next, we show that $h_k^\lambda$ has unit integral.  We merely need note that:
   \begin{align*}
      \int_G \cdots & \int_G h_k^\lambda(y_1,\ldots,y_k) dy_1 \ldots dy_k \\
                    &= (d_\lambda)^k\int_G \cdots \int_G \Delta^{\lambda}(y_1) \Delta^{\lambda}(y_1^{-1}y_2) \cdots \Delta^{\lambda}(y_{k-1}^{-1}y_k) \Delta^{\lambda}(y_k^{-1}) dy_1 \cdots dy_k \\
                    & = (d_\lambda)^k (\Delta^\lambda * \cdots * \Delta^\lambda)(e)\\
                    & = 1
   \end{align*}
   by Lemma~\ref{lem:idempotent_convolution}, where we have taken a convolution of $k+1$ copies of the function $\Delta^\lambda$.

   The next step is to prove that for each neighborhood $V$ of $(eL,\ldots,eL)\in G/L\times \cdots \times G/L$, the functions $h_k^{\lambda_n}$ uniformly converge to zero on its complement $V^c$ in $G/L\times \cdots \times G/L$.  As usual, we will identify $V$ with the corresponding right-$L$-invariant neighborhood of $(e,\ldots, e) \in G\times \cdots\times G$.  We begin by noting that $|h_k^\lambda(y_1, \ldots, y_k)|\leq (d_\lambda)^k$ for all $y_1, \ldots, y_k\in G$.  Furthermore, if $|h_k^\lambda(y_1, \ldots y_k)|=(d_\lambda)^k$, then 
   \[
      |\Delta^\lambda(y_1)| = |\Delta^\lambda(y_1^{-1}y_2)|=  \cdots = |\Delta^\lambda(y_{k-1}^{-1})|=1,
   \]
   and $|\Delta^\lambda(y_k^{-1})|=1$.  
   It follows from Corollary~\ref{cor:absolute_value_equals_one} that $y_1, y_1^{-1}y_2, \ldots, y_{k-1}^{-1}y_k\in L$. Since $L$ is a group, we finally have that $y_1, \ldots, y_k\in L$.

   As above, let $\mu = \sum_{i\in\widetilde{S}^c} \omega_i$. We note, as before, by the continuity of $h_k^\mu$, that there is $\delta < 1$ such that 
   \[
      \sup_{(y_1, \ldots, y_k)\in (G\times \cdots \times G) \backslash V} |h_k^\mu(y_1, \ldots, y_k)| \leq \delta (d_\lambda)^k.
   \]
   Furthermore, we note that:
   \[
      \frac{1}{(d_\lambda)^k}|h_k^\lambda(y_1, \ldots, y_k)| = \prod_{i\in\widetilde{S}^c}
      \frac{1}{(d_{\omega_i})^k} |h_k^{\omega_i}(y_1, \ldots, y_k)|^{\lambda_i},
   \]
   where $\lambda = \lambda_1 \omega_1 + \cdots + \lambda_k \omega_k + \chi_Z$. Also, note that $c:=\frac{1}{\prod_{i\in\widetilde{S}^c} (d_{\omega_i})^k}$ is a constant that does not depend on $\lambda$.  Thus, if we let $m_\lambda=\min_{\Pi\backslash S} \lambda$, then:
   \begin{align*}
      |h_k^\lambda(y_1, \ldots, y_k)| & \leq c(d_\lambda)^k|h_k^\mu(y_1,\ldots, y_k)|^{m_\lambda} \\
            & \leq c(d_\lambda)^k \delta^{m_\lambda}.
   \end{align*}
   for all $(y_1, \ldots, y_k) \in (G\times \cdots \times G) \backslash V$. Since this expression grows polynomially in $\lambda$ and converges to zero exponentially in $m_\lambda$, one shows, using essentially the same argument as above, that if $\{\lambda_n\}_{n\in\N} \subseteq \Lambda^+ \cap i(\gt \ominus \gt_S)^*$ satisfies the hypotheses of the theorem, then 
   \[
      \sup_{(y_1,\ldots, y_k) \in (G\times \cdots \times G) \backslash V} |h_k^{\lambda_n}(y_1, \ldots, y_k)| \stackrel{n\rightarrow\infty}{\rightarrow} 0.
   \]



   The astute reader will note that we have not yet used the hypothesis that $\sigma^{\lambda_n}$ is a self-conjugate representation.  We will now use this hypothesis for the last part of the proof that $h_k^{\lambda_n}$ is an approximate identity as $n\rightarrow\infty$.  Since the $h_k^{\lambda_n}$'s are not nonnegative functions, we must additionally show that their $L^1$ norms are uniformly bounded.  The assumption that $\sigma^{\lambda_n}$ is self-conjugate shows that $\langle v_\lambda, \sigma^{\lambda_n}(x) v_\lambda \rangle = \overline{\langle v_\lambda, \sigma^{\lambda_n}(x)v_\lambda\rangle}$ for all $x\in G$; that is, the functions $\Delta^{\lambda_n}$ (and thus $h_k^{\lambda_n}$) are real valued for all $n\in\N$.

   By the continuity of the functions $h^{\lambda_n}_k$, one sees that there is a right-$L$-invariant neighborhood $W$ of $(e, \cdots, e)$ in $G\times \cdots \times G$ such that:
   \[
      h_k(y_1,\ldots, y_k) > 0.
   \]
   for all $(y_1, \ldots y_k)\in W$.  

   Now let $(h^{\lambda_n}_k)^- = \min{0,h^{\lambda_n}_k}$ for each $n\in \N$.  We see that $|h^{\lambda_n}_k| = h^{\lambda_n}_k+ 2(h^{\lambda_n}_k)^-$.  Since $(h^{\lambda_n}_k)^-=0$ on the open set $W$, we see that 
   \[
      \lim_{n\rightarrow \infty} ||(h^{\lambda_n}_k)^-||_{\infty} = 0.
   \]
   It follows that 
   \[
      \lim_{n\rightarrow \infty} ||h^{\lambda_n}_k||_1 = 1,
   \]
   and thus that the $L^1$ norms of $h^{\lambda_n}_k$ are uniformly bounded as $n\rightarrow \infty$. 
\end{proof} 

We are now ready to show that the Berezin transform $B^{\lambda_n}$ converges to the identity operator on $L^2(G/L)$ as $n\rightarrow\infty$ under the hypotheses of the previous theorem. 
\begin{corollary}
   \label{cor:asymptotic_Berezin}
   Fix a subset $S\subseteq \Pi$.  Then, in the notation and terminology of Section~\ref{sec:parabolic_subgroups}, we consider the corresponding parabolic subgroup $P$ and the group $L = P\cap G$.   Recall that a weight $\lambda\in i\gt^*$ belongs to $i(\gt\ominus\gt_{S})^*$ if and only if $\supp \lambda \subseteq \Pi\backslash S$.

   Now let $\{\lambda_n\}_{n\in\N} \subseteq \Lambda^+\cap i(\gt\ominus\gt_{S})^*$  be a sequence satisfying the hypotheses of Theorem~\ref{thm:approx_identity}.
   Then:
   \begin{enumerate}
      \item For all $1\leq p < \infty$ and all $f\in L^p(G/L)$,
      \[
         B^{\lambda_n} f \stackrel{n\rightarrow\infty}{\longrightarrow} f \text{ (in $L^p(G/L)$)},
      \]
      The same conclusion also holds (with uniform convergence) if $f\in C(G/L)$.

   \item Suppose furthermore that $\sigma^{\lambda_n}$ is self-conjugate for all $n\in\N$. Let $f_1, \ldots, f_k\in L^p(G/L)$ If $1\leq p < \infty$ and $f,g\in L^p(G/L)$, then:
         \[
            B^{\lambda_n}(T^{\lambda_n}_{f_1} \cdots T^{\lambda_n}_{f_k}) \stackrel{n\rightarrow\infty}{\longrightarrow} f_1 \cdots f_k \text{ (in $L^p(G/L)$)} 
         \]
         The same conclusion also holds (with uniform convergence) if $f_1, \ldots, f_k\in C(G/L)$.
   \end{enumerate}
\end{corollary}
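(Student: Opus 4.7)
For part (1), my approach is direct: Proposition~\ref{prop:Berezin_convolution} already identifies the Berezin transform as the convolution operator $B^{\lambda_n} f = f * (d_{\lambda_n}|\Delta^{\lambda_n}|^2)$, and Theorem~\ref{thm:approx_identity}(1) asserts that $\{d_{\lambda_n}|\Delta^{\lambda_n}|^2\}_{n\in\N}$ is an approximate identity on $G/L$. Since these kernels are nonnegative with unit $L^1$-norm, the usual two-step argument (uniform convergence first on continuous symbols by compactness of $G/L$ together with the concentration property, then extension to $L^p(G/L)$ by density combined with Young's inequality $\|f * g\|_p \leq \|f\|_p\, \|g\|_1$) delivers both conclusions at once.

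For part (2), the plan is to first derive a clean integral representation that reveals the kernel $h_k^{\lambda_n}$ from Theorem~\ref{thm:approx_identity}(2). Iterating Lemma~\ref{lem:toeplitz_as_integrated_rep} by applying each Toeplitz operator in succession to the coherent vector $\sigma^{\lambda_n}(x)v_{\lambda_n}$ and taking the inner product with itself, one obtains
\[
   B^{\lambda_n}(T^{\lambda_n}_{f_1}\cdots T^{\lambda_n}_{f_k})(x) = (d_{\lambda_n})^k \int_{G^k} f_1(y_1)\cdots f_k(y_k)\, \Delta^{\lambda_n}(x^{-1}y_k)\Delta^{\lambda_n}(y_k^{-1}y_{k-1}) \cdots \Delta^{\lambda_n}(y_1^{-1}x)\, dy_1 \cdots dy_k.
\]
The substitution $y_i = x w_{k+1-i}$, which preserves Haar measure and reverses the indexing of the $\Delta$'s so as to match the telescoping pattern defining $h_k^{\lambda_n}$, transforms this into
\[
   B^{\lambda_n}(T^{\lambda_n}_{f_1}\cdots T^{\lambda_n}_{f_k})(x) = \int_{G^k} f_k(xw_1)\, f_{k-1}(xw_2) \cdots f_1(xw_k)\, h_k^{\lambda_n}(w_1,\ldots,w_k)\, dw_1\cdots dw_k.
\]

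Setting $F_x(w_1,\ldots,w_k) := f_k(xw_1)\cdots f_1(xw_k)$, this right-hand side is the pairing of $F_x$ with $h_k^{\lambda_n}$, and observe that $F_x(e,\ldots,e) = f_1(x)\cdots f_k(x)$. For continuous $f_i$, combining the concentration and unit-integral properties of $h_k^{\lambda_n}$ from Theorem~\ref{thm:approx_identity}(2) with the equicontinuity of $F_x$ in $w$ (uniform in $x$, by compactness of $G/L$) yields uniform convergence to $f_1\cdots f_k$. To extend to $L^p$ symbols, I would approximate each $f_i$ in $L^p$ by continuous functions and argue by equicontinuity of the multilinear map $(f_1,\ldots,f_k) \mapsto B^{\lambda_n}(T^{\lambda_n}_{f_1}\cdots T^{\lambda_n}_{f_k})$ in $n$: a Minkowski--Hölder estimate combined with translation invariance of Haar measure bounds the $L^p$ norm of the output by $\|h_k^{\lambda_n}\|_1$ times a product of $L^{kp}$-norms of the $f_i$'s, and $\sup_n \|h_k^{\lambda_n}\|_1 < \infty$ is guaranteed by Theorem~\ref{thm:approx_identity}(2).

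The main obstacle is precisely this last uniform $L^1$-control on $h_k^{\lambda_n}$. Unlike in part (1), where the positivity of $|\Delta^{\lambda_n}|^2$ makes $\|d_{\lambda_n}|\Delta^{\lambda_n}|^2\|_1 = 1$ automatic, the multivariable kernel $h_k^{\lambda_n}$ is signed for $k \geq 2$, and without the self-conjugacy hypothesis (which forces $h_k^{\lambda_n}$ to be real-valued and, via the argument in the proof of Theorem~\ref{thm:approx_identity}(2), makes its negative part vanish uniformly in the limit) the $L^1$ norms could in principle blow up. This is exactly why part (2) of Theorem~\ref{thm:approx_identity} requires self-conjugacy, and it is the same assumption that makes the density step of the present proof go through.
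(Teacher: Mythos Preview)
Your proposal is correct and follows essentially the same strategy as the paper: for (1) both invoke Proposition~\ref{prop:Berezin_convolution} together with Theorem~\ref{thm:approx_identity}(1), and for (2) both expand $\langle T^{\lambda_n}_{f_1}\cdots T^{\lambda_n}_{f_k}\sigma^{\lambda_n}(x)v_{\lambda_n},\sigma^{\lambda_n}(x)v_{\lambda_n}\rangle$ as a $k$-fold integral with the telescoping product of $\Delta^{\lambda_n}$'s and then recognize $h_k^{\lambda_n}$ so that Theorem~\ref{thm:approx_identity}(2) applies.

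The only real difference is presentational. The paper packages the resulting integral as the group convolution $(f_1\otimes\cdots\otimes f_k)*h_k^{\lambda_n}$ on $(G/L)^k$ evaluated on the diagonal, and then says ``the result follows''; you instead perform the explicit change of variables $y_i=xw_{k+1-i}$ to rewrite the integral as $\int_{(G/L)^k} F_x(w)\,h_k^{\lambda_n}(w)\,dw$ with $F_x(e,\ldots,e)=f_1(x)\cdots f_k(x)$, and you spell out the density step for $L^p$ via a Minkowski--H\"older bound controlled by $\sup_n\|h_k^{\lambda_n}\|_1$. Your version is arguably cleaner, since the paper's diagonal-of-a-convolution identification uses (implicitly) the self-conjugacy hypothesis to pass from $\Delta^{\lambda_n}(g)$ to $\Delta^{\lambda_n}(g^{-1})$, whereas your substitution matches $h_k^{\lambda_n}$ directly. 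Your discussion of why the self-conjugacy hypothesis is exactly what is needed for the uniform $L^1$ control of $h_k^{\lambda_n}$ (and hence for the density step) is also on point. One small caveat: your Minkowski--H\"older estimate produces a bound in terms of $\prod_i\|f_i\|_{kp}$ rather than $\prod_i\|f_i\|_p$; this is unavoidable, since for the limit $f_1\cdots f_k$ to lie in $L^p(G/L)$ one already needs the $f_i$ in $L^{kp}$, so the statement of part~(2) should be read with that implicit integrability in mind (the paper is silent on this as well).
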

\begin{proof}
   Part (1) is an immediate corollary of Theorem~\ref{thm:approx_identity} and the fact that $B^\lambda f = f * (d_\lambda |\Delta^\lambda|^2)$.

   Now suppose that $f_1,\ldots, f_k\in L^p(G/L)$.  We define the function $f_1\otimes \cdots f_k \in L^p(G/L \times \cdots \times G/L)$ by $(f_1\otimes \cdots \otimes f_k)(x_1, \ldots, x_k) = f_1(x_1) \cdots f_k(x_k)$ for $x_1, \ldots, x_k\in G$. Then
   \begin{align*}
      \langle T^\lambda_{f_1}\cdots  T^\lambda_{f_k} &  \sigma^\lambda(x) v_\lambda, \sigma^\lambda(x) v_\lambda \rangle \\ 
            = & \, d_\lambda \int_G f_1(y_1) \langle T_{f_2} \cdots T_{f_k} \sigma^\lambda(x) v_\lambda , \sigma^\lambda(y_1)v_\lambda\rangle \overline{\langle \sigma^\lambda(x) v_\lambda, \sigma^\lambda(y_1) v_\lambda \rangle }\, dy_1 \\
         = & \, (d_\lambda)^k\int_G \cdots \int_G f_1(y_1) \cdots f_k(y_k)
              \langle \sigma^\lambda(x) v_\lambda, \sigma^\lambda(y_1) v_\lambda \rangle \langle \sigma^\lambda(y_1)v_\lambda, \sigma^\lambda(y_2) v_\lambda \rangle \\
           & \cdots \langle \sigma^\lambda(y_{k-1})v_\lambda, \sigma^\lambda(y_k) v_\lambda \rangle \langle \sigma^\lambda(y_k) v_\lambda, \sigma^\lambda(x) v_\lambda \rangle \, dy_k \cdots dy_1\\
          = & \, (d_\lambda)^k\int_G \cdots \int_G f_1(y_1)\cdots f_k(y_k) \Delta^\lambda(x^{-1}y_1) \Delta^\lambda(y_1^{-1}y_2) \\
            & \cdots \Delta^\lambda(y_{k-1}^{-1} y_k) \Delta^\lambda(y_k^{-1}x)\,  dy_k \cdots dy_1 \\
          = & \, (d_\lambda)^k\int_G \cdots \int_G f_1(y_1)\cdots f_k(y_k) \Delta^\lambda(y_1^{-1}x) \Delta^\lambda(y_2^{-1}y_1) \\
            & \cdots \Delta^\lambda(y_k^{-1} y_{k-1}) \Delta^\lambda(x^{-1}y_k)\,  dy_k \cdots dy_1 \\
         = & \, (f_1 \otimes \cdots \otimes f_k) * h_k^\lambda(x, \cdots, x) 
   \end{align*}

   The result then follows from Theorem~\ref{thm:approx_identity}.
\end{proof}

Corollary~\ref{cor:asymptotic_Berezin}, in turn, allows us to easily prove the following result about the asymptotic multiplicity of an irreducible representation $\pi$ in $\siot$ as $\lambda$ tends to infinity as in the hypotheses to Theorem~\ref{thm:approx_identity}. In fact, for $\lambda\in\Lambda^+ \cap i(\gt\ominus\gt_S)^*$ sufficiently large, one has that $\pi$ appears in $\siot$ with multiplicity $\dim \cH_\pi^L$.

\begin{corollary}
   \label{cor:asymptotic_tensor_product}
   Fix $S\subseteq \Pi$ and let $\{\lambda_n\}_{n\in\N} \subseteq \Lambda^+\cap i(\gt\ominus\gt_S)^*$ be a sequence satisfying the hypotheses of Theorem~\ref{thm:approx_identity} such that each representation $\sigma^{\lambda_n}$ is self-conjugate. Now let $\pi$ be a representation such that $\cH_\pi^L\neq\{0\}$.  Then, for $n\in\N$ sufficiently large, 
   \[
      m(\pi,\sniot) = \dim \cH_\pi^L.
   \]
\end{corollary}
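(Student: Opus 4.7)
The plan is to exploit the strong convergence $B^{\lambda_n}\to I$ established in Corollary~\ref{cor:asymptotic_Berezin}(1), combined with the explicit description of the image of $B^{\lambda_n}$ on the $\pi$-isotypic part of $L^2(G/L)$ given by Corollary~\ref{cor:Berezin_transform_pi_isotypic_vectors}. The key point is that $B^{\lambda_n}(L^2(G/L)^\pi) = \cH_\pi\otimes \overline{\cK^{\lambda_n}_\pi}$ is strictly smaller than $L^2(G/L)^\pi\cong \cH_\pi\otimes\overline{\cH_\pi^L}$ whenever $m(\pi,\sniot)<\dim\cH_\pi^L$, and this would be incompatible with $B^{\lambda_n}$ tending to the identity on the (finite-dimensional) subspace $L^2(G/L)^\pi$.

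Concretely, I would first note that since $\pi$ is finite-dimensional, so is $\cH_\pi^L$, and hence the $\pi$-isotypic subspace $L^2(G/L)^\pi \cong \cH_\pi\otimes \overline{\cH_\pi^L}$ is finite-dimensional. By Corollary~\ref{cor:Berezin_transform_pi_isotypic_vectors}, $B^{\lambda_n}$ sends $L^2(G/L)^\pi$ into itself, with image equal to $\cH_\pi\otimes \overline{\cK^{\lambda_n}_\pi}$. By Corollary~\ref{cor:asymptotic_Berezin}(1) applied with $p=2$, one has $B^{\lambda_n} f\to f$ in $L^2$ for every $f\in L^2(G/L)$; restricting to $L^2(G/L)^\pi$ and using that strong and operator-norm convergence coincide on a finite-dimensional space, we conclude that $B^{\lambda_n}|_{L^2(G/L)^\pi}$ converges in operator norm to the identity. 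In particular, for $n$ sufficiently large this restriction is invertible. Its image $\cH_\pi\otimes \overline{\cK^{\lambda_n}_\pi}$ must then equal its domain $\cH_\pi\otimes \overline{\cH_\pi^L}$, giving $\dim\cK^{\lambda_n}_\pi=\dim\cH_\pi^L$. Combined with the inclusion $\cK^{\lambda_n}_\pi\subseteq\cH_\pi^L$ established in Section~\ref{sec:matrix_coefficients}, this forces $\cK^{\lambda_n}_\pi=\cH_\pi^L$, whence $m(\pi,\sniot)=\dim\cK^{\lambda_n}_\pi=\dim\cH_\pi^L$, as desired.

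There is no real obstacle once the preceding machinery is in place. The only subtle step is the promotion of strong convergence to norm convergence on the isotypic subspace, which relies on $L^2(G/L)^\pi$ being finite-dimensional; this in turn holds because $\pi$ is a finite-dimensional representation of the compact group $G$, so $\cH_\pi^L$ is finite-dimensional. It is also worth observing that the self-conjugacy assumption on $\sigma^{\lambda_n}$ appearing in the hypotheses of the corollary is not needed for this argument: we invoke only part (1) of Corollary~\ref{cor:asymptotic_Berezin}, whose proof rests only on part (1) of Theorem~\ref{thm:approx_identity} (the approximate-identity property of $d_{\lambda_n}|\Delta^{\lambda_n}|^2$), and neither statement requires $\sigma^{\lambda_n}$ to be self-conjugate.
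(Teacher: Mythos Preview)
Your proposal is correct and follows essentially the same strategy as the paper: both arguments combine the image description $B^{\lambda_n}(L^2(G/L)^\pi)=\cH_\pi\otimes\overline{\cK^{\lambda_n}_\pi}$ from Corollary~\ref{cor:Berezin_transform_pi_isotypic_vectors} with the convergence $B^{\lambda_n}\to I$ on the finite-dimensional space $L^2(G/L)^\pi$ from Corollary~\ref{cor:asymptotic_Berezin}(1) to force $\cK^{\lambda_n}_\pi=\cH_\pi^L$ for large $n$. Your execution is in fact a bit cleaner than the paper's: you pass from strong convergence to operator-norm convergence on the finite-dimensional isotypic block and conclude invertibility directly, whereas the paper argues via nonvanishing of the inner products $\langle B^{\lambda_n} f_k,f_k\rangle$ for a basis $\{f_k\}$---a step that, as written, does not by itself imply surjectivity. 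Your remark that the self-conjugacy hypothesis is not actually used (since only part~(1) of Corollary~\ref{cor:asymptotic_Berezin} is invoked) is also correct.
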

\begin{proof}

   By Proposition~\ref{cor:Berezin_transform_pi_isotypic_vectors}, we know that
   \begin{equation}
      \label{eq:Berezin_pi_isotypic_image}
      \cB^\lambda(L^2(G/L)^\pi) = \cH_\pi \otimes \overline{\cK^\lambda_\pi} \subseteq \cH_\pi \otimes \overline{\cH_\pi^L} = L^2(G/L)^\pi.
   \end{equation}

   By Corollary~\ref{cor:asymptotic_Berezin}, we know that
   \[
      \lim_{n\rightarrow \infty} B^{\lambda_n}(\pi_{w,v}) = \pi_{w,v}.
   \]
   It follows that for all $n\in\N$ sufficiently large, one has that $\langle \cB^{\lambda_n}\pi_{w,v}, \pi_{w,v} \rangle\neq 0$ (to see this, recall that if $f\in L^2(G/L)^\pi\backslash\{0\}$ then $||f-g||_2 \geq ||f||_2>0$ for all $g\in L^2(G/L)^\pi$ such that $g\perp f$)). 

   Fix a basis $f_1, \ldots, f_s$ for the finite-dimensional vector space $L^2(G/L)^\pi$ (where $s= \dim L^2(G/L)^\pi = \dim \cH_\pi \dim \cH_\pi^L$). We see that for $n\in\N$ sufficiently large, one has that $\langle \cB^\lambda(f_k), f_k\rangle\neq 0$ for all $1\leq k\leq s$.  It follows that for $n\in\N$ sufficiently large, $\cB^{\lambda_n}(L^2(G/L)^\pi = L^2(G/L)^\pi$.

   By (\ref{eq:Berezin_pi_isotypic_image}), we conclude that $\cK^{\lambda_n}_\pi \cong \cH_\pi^L$.  Since $\dim \cK^\lambda_\pi = m(\pi, \sniot)$, we are done.
\end{proof} 

Corollary~\ref{cor:asymptotic_tensor_product} allows us to consistently decompose the representations $\hniot$ into irreducible subrepresentations for large $n\in\N$ in the following way.  For large $n\in\N$, we have that the intertwining operator
\begin{align*}
   C_n: \hniot & \rightarrow L^2(G/L) \\
   v\otimes\overline{w} & \mapsto \sigma^{\lambda_n}_{v,v_{\lambda_n}} \overline{\sigma^{\lambda_n}_{w,v_{\lambda_n}}}  
\end{align*}
has an isomorphic restriction $C_n|_{(\hniot)^\pi} : (\hniot)^\pi\rightarrow L^2(G/L)^\pi$ to the $\pi$-isotypic vectors.  Thus, we need merely choose an orthonormal basis $\{v_1, \ldots, v_{\dim \cH_\pi^L}\}$ for $\cH_\pi^L$.  We can thus define the irreducible invariant spaces $(\hniot)^\pi_k$ of $\hniot$ by:
\[
   (\hniot)^\pi_k = C^{-1}(\{\pi_{w,v_k}\mid w\in \cH_\pi\}),
\]
and we have that:
\[
   (\hniot)^\pi = \bigoplus_{k=1}^{\dim \cH^L_\pi} (\hniot)^\pi_k.
\]

The advantage gained by this decomposition is that we have made only one arbitrary choice (that of an orthonormal basis for $\cH_\pi^L$), and we have obtained an orthogonal decomposition of $(\hniot)^\pi$ into irreducible copies of $\pi$ for all $n\in\N$ sufficiently large.  
In fact, this decomposition allows us to ask the following question: do the bases $\{(\vniot)^\pi_1,\ldots, (\vniot)^\pi_{\dim \cH^L_\pi}\}$ for $\cK^{\lambda_n}_\pi$ converge to orthonormal bases as $n\rightarrow\infty$?  The fact that the operator
\[
\frac{d_{\lambda_n}}{d_\pi} \cB^{\lambda_n}_\pi = \frac{d_{\lambda_n}}{d_\pi}\sum_{k=1}^{\dim \cH^L_\pi} (\vniot)^\pi_k \otimes \overline{(\vniot)^\pi_k} : L^2(G/L)^\pi \rightarrow L^2(G/L)^\pi
\]
converges to the identity operator as $n\in\N$ (this follows from  Proposition~\ref{prop:Berezin_matrix_coefficient_formula} and Corollary~\ref{cor:asymptotic_Berezin}) suggests that, in fact, the \textit{rescaled} bases $\{\sqrt{\frac{d_{\lambda_n}}{d_\pi}} (\vniot)^\pi_k \otimes \overline{(\vniot)^\pi_k} \}_{k=1}^{\dim\cH_\pi^L}$ become orthonormal in the limit as $n\in\infty$, in the sense that\[
   \lim_{n\rightarrow\infty} \frac{d_{\lambda_n}}{d_\pi}\langle (\vniot)^\pi_j, (\vniot)^\pi_k \rangle = \delta_{jk}
\]
for all $1\leq j,k\leq \dim \cH_\pi^L$.

Finally, we observe that Theorem~\ref{thm:approx_identity} allows us to use the results in \cite{HLW} to prove a Szegő Limit Theorem. A Szegő Limit Theorem for Toeplitz operators $T^\lambda_f$ with a real-valued symbol $f\in L^1(G/L)$ shows that as $\lambda\rightarrow\infty$ in an appropriate manner, the proportion of eigenvalues of $T^\lambda_f$ greater than a given real number $\tau\in\R$ such that $\mu(f^{-1}(\tau))=0$ converges to the measure $\mu(f^{-1}((\tau,\infty)))$ of the set of points in $G/L$ where $f$ takes on values greater than $\tau$.  This remarkable theorem was first proved in the context of Toeplitz operators defined on finite-dimensional subspaces of $L^2(S^1)$ by G.\ Szegő in 1915 (see \cite{Szego}).  
\begin{corollary}
   Fix $S\subseteq \Pi$ and let $\{\lambda_n\}_{n\in\N} \subseteq \Lambda^+\cap i(\gt\ominus\gt_S)^*$ be a sequence satisfying the hypotheses of Theorem~\ref{thm:approx_identity}. We refer to the $G$-invariant probability measure on $G/L$ as $\mu$.  For each $f\in L^1(G/L)$ such that $f=\overline{f}$.  For each $\tau\in\R$, let $N^+(\tau, T^{\lambda_n}_f)$ be the number of eigenvalues of $f$ (with multiplicity) that are greater than $\tau$.  If $\tau\in\R$ is such that $\mu(f^{-1}(\tau)) = 0$, then:
   \[
      \lim_{n\rightarrow\infty} \frac{N^+(\tau, T^{\lambda_n}_f)}{\dim \cH^{\lambda_n}} = \mu(f^{-1}((\tau,\infty)))
   \]
\end{corollary}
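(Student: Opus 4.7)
The plan is to prove a moment-convergence statement for the Toeplitz operators and then extract the Szegő limit from weak convergence of spectral counting measures. By first reducing to the case of continuous real-valued $f$ via standard density arguments (using $\|T^\lambda_f\|_\op \leq \|f\|_\infty$ and stability of spectral distributions under $L^1$ perturbations of the symbol, as in the argument of \cite{HLW}), we may assume $f\in C(G/L)$ with $f=\overline{f}$.

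The core step is to establish, for each fixed $k\in\N$, the moment asymptotic
\[
   \frac{1}{d_{\lambda_n}}\Tr\bigl((T^{\lambda_n}_f)^k\bigr) \xrightarrow{n\to\infty} \int_{G/L} f^k\, d\mu.
\]
To obtain this, I would first prove the trace identity $\Tr(A) = d_\lambda \int_{G/L} B^\lambda(A)(x)\, d\mu(x)$ for any $A\in\End(\cH_\lambda)$. This follows from Schur's orthogonality relations applied to the $G$-invariant vector $\int_G (\siot)(x)(\viot)\, dx \in \hpiot$: since this integral is $G$-invariant, hence a $G$-intertwiner $\cH_\lambda \to \cH_\lambda$, it equals a multiple of the identity, and evaluating its trace shows the multiple is $1/d_\lambda$. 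Plugging this into $\int_{G/L} B^\lambda(A)(x)\, d\mu(x) = \Tr\bigl(A \cdot \tfrac{1}{d_\lambda}I\bigr)$ gives the identity. Applying it to $A = (T^{\lambda_n}_f)^k$ and then invoking Corollary~\ref{cor:asymptotic_Berezin}(2), which gives $B^{\lambda_n}((T^{\lambda_n}_f)^k) \to f^k$ in $L^1(G/L)$, yields the desired moment convergence.

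Let $\nu_n$ be the normalized spectral counting measure of the self-adjoint operator $T^{\lambda_n}_f$, that is, the probability measure on $\R$ assigning mass $1/d_{\lambda_n}$ to each eigenvalue (with multiplicity). Since $\|T^{\lambda_n}_f\|_\op \leq \|f\|_\infty$, every $\nu_n$ is supported in the compact interval $[-\|f\|_\infty, \|f\|_\infty]$. The moment asymptotic above says $\int t^k\, d\nu_n(t) \to \int t^k\, d(f_*\mu)(t)$ for every $k$, where $f_*\mu$ denotes the pushforward of $\mu$ under $f$ (also supported in the same interval). By the Stone--Weierstrass theorem combined with the fact that probability measures supported on a fixed compact set are determined by their moments, we conclude that $\nu_n \to f_*\mu$ weakly in the sense of probability measures on $\R$. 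Finally, the hypothesis $\mu(f^{-1}(\tau))=0$ says precisely that $\tau$ is not an atom of $f_*\mu$, so the cumulative distribution function of $f_*\mu$ is continuous at $\tau$. Weak convergence of probability measures implies pointwise convergence of CDFs at continuity points, and so
\[
   \frac{N^+(\tau, T^{\lambda_n}_f)}{\dim \cH^{\lambda_n}} = \nu_n\bigl((\tau,\infty)\bigr) \longrightarrow (f_*\mu)\bigl((\tau,\infty)\bigr) = \mu\bigl(f^{-1}((\tau,\infty))\bigr).
\]

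The main obstacle I anticipate is the passage from continuous symbols to general $L^1$ symbols: the spectral radius bound $\|T^{\lambda_n}_f\|_\op\leq \|f\|_\infty$ fails, so the measures $\nu_n$ need not share a common compact support, and convergence of moments no longer uniquely determines the limit. This is exactly the step where the machinery of \cite{HLW} is valuable: their argument handles this extension by combining trace-norm bounds of the form $\|T^{\lambda_n}_f\|_1 \leq d_{\lambda_n}\|f\|_1$ (already established in Section~\ref{sec:Toeplitz_definition}) with a truncation scheme that writes $f = f_M + (f-f_M)$ for $f_M = f\cdot \chi_{\{|f|\leq M\}}$, treats the bounded part by the moment method above, and bounds the contribution of the tail to the eigenvalue counting function via Ky Fan--type inequalities. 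Verifying that the ``minimal changes'' alluded to in the introduction indeed adapt the HLW argument to our setting (which replaces their sums of isotypic subspaces by the single-isotypic Bergman space $\cA^{\lambda_n}$) is the technical content of the proof, but the input needed from the flag-manifold side---namely the approximate-identity property of the Berezin kernels---is precisely Theorem~\ref{thm:approx_identity}.
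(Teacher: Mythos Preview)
Your approach is different from the paper's. The paper does not carry out a moment-method argument at all; it simply observes that the ingredients already proved---Proposition~\ref{prop:Berezin_matrix_coefficient_formula}, Lemma~\ref{lem:idempotent_convolution}, Corollary~\ref{cor:asymptotic_Berezin}, and Propositions~\ref{prop:Berezin_convolution} and \ref{prop:Toeplitz_product_trace_Berezin_transform}---are precisely the analogues of Lemmas~3.2, 3.3, and 3.5 in \cite{HLW}, with $d_{\lambda_n}|\Delta^{\lambda_n}|^2$ playing the role of their kernel $D_r$. Since the proof of Theorem~3.10 in \cite{HLW} uses only those lemmas, it transfers verbatim.

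There is a genuine gap in your argument. Your key step invokes Corollary~\ref{cor:asymptotic_Berezin}(2) to obtain $B^{\lambda_n}\bigl((T^{\lambda_n}_f)^k\bigr)\to f^k$ for every $k$, but part~(2) of that corollary carries the \emph{additional} hypothesis that each $\sigma^{\lambda_n}$ be self-conjugate, and the Szeg\H{o} corollary you are proving does not assume this. Without self-conjugacy, Theorem~\ref{thm:approx_identity}(2) is unavailable: the functions $h_k^{\lambda_n}$ have unit integral and decay uniformly off the diagonal, but one does not know that their $L^1$ norms remain bounded (cf.\ the Observation following Theorem~\ref{thm:approx_identity}), so the higher-moment asymptotics $\tfrac{1}{d_{\lambda_n}}\Tr\bigl((T^{\lambda_n}_f)^k\bigr)\to\int f^k$ for $k\ge 3$ are not justified by the results of the paper. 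The paper's route sidesteps this entirely: none of the ingredients it cites require self-conjugacy, and in particular the only trace asymptotic needed is the quadratic one coming from Proposition~\ref{prop:Toeplitz_product_trace_Berezin_transform} together with part~(1) of Corollary~\ref{cor:asymptotic_Berezin}. If you wish to keep the moment-method outline, you must either add the self-conjugacy hypothesis or supply an independent argument for the $k\ge 3$ moment convergence.
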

\begin{proof}
   The authors in \cite{HLW} prove a quite general Szegő limit theorem for Toeplitz operators defined on finite-dimensional subspaces of $L^2(G/K)$, where $G$ and $K\subseteq G$ are compact groups.  In particular, they consider nets $\Sigma$ (with respect to some directed set $(\cR, \geq)$) of invariant subspaces of the form
   \[
      \Sigma(r) = \sum_{\pi\in\sigma(r)} L^2(G/K)^\pi,
   \]
   where $\sigma(r)$ is a finite subset of $\widehat{G/K}$ for each $r\in\cR$ (in turn, we use the standard notation $\widehat{G/K}$ to refer to the set of equivalence classes of irreducible representations $(\pi,\cH_\pi)$ of $G$ such that $\cH_\pi^K\neq\{0\}$). That is, they consider direct sums of isotypic subspaces of $L^2(G/K)$.  As part of their analysis, they define the function $D_\Gamma : G/K\rightarrow \C$ by:
      \[
         D_r(x) = \sum_{\pi\in\sigma(r)} \Tr(\pi(x)P_\pi^L),
      \]
   where $P_\pi^L: \cH_\pi\rightarrow \cH_\pi^L$ is the orthogonal projection.

   Technically, our Toeplitz operators $T^\lambda_f \in\End(\cH_\lambda)$ for $f\in L^1(G/L)$ do not satisfy the assumptions made in the paper.  However, a careful reading of the proofs allows us to observe the following.  First, we use the sequence of invariant subspaces $\cA^{\lambda_n}$ of $L^2(G)$ for $n\in\N$ in place of the net $\Sigma$. Furthermore, in place of the function $D_r$ for $r\in\cR$, we use the function $d_{\lambda_n}|\Delta^{\lambda_n}|^2$. We have proved here that $d_{\lambda_n}|\Delta^{\lambda_n}|^2$ satisfies the conclusions of Lemma~3.2 (these correspond to Proposition~\ref{prop:Berezin_matrix_coefficient_formula} and Lemma~\ref{lem:idempotent_convolution} in our paper) and Lemma~3.3 (corresponding to our Corollary~\ref{cor:asymptotic_Berezin}) in \cite{HLW}. We have also proved that our Toeplitz operators satisfy the conclusions of Lemma~3.5 of \cite{HLW} (corresponding to Propositions~\ref{prop:Berezin_convolution}, \ref{prop:Berezin_matrix_coefficient_formula}, and \ref{prop:Toeplitz_product_trace_Berezin_transform}). Since the proof of the Szegő limit theorem in \cite{HLW} depends only on their Lemmas~3.2, 3.3, and 3.5, the proof of their Theorem~3.10 applies verbatim to prove the Szegő Limit Theorem in our context.

\end{proof}

\section{Concluding Remarks and Further Research}
In this paper we have begun to outline what we hope to be a general program for Toeplitz operators on Bergman spaces over generalized flag manifolds of compact Lie groups.  In the future, one would like to prove more explicit results on the Berezin transform, ideally including an explicit calculation of its spectrum, as in \cite{Zhang}.  As it stands, we currently have only asymptotic results.  This problem is much more difficult than in the case of the compact Hermitian symmetric spaces, because one no longer has that the decomposition of $\siot$ into irreducible subspaces is multiplicity free for all irreducible representations $\sigma^\lambda$.  

Indeed, formulas for even the multiplicities of the irreducible subrepresentations of $\siot$ are already quite non-trivial and combinatorically complex; the authors are not aware of any general formula for decomposing the vector $\viot$ in $\hpiot$ into its components in the irreducible subspaces of $\hpiot$, which one would expect to aide in the study of the Berezin transform.  Closed formulas for the multiplicities do exist, however: one has the Littlewood-Richardson rule for the case of $\U(n)$ and Steinberg's formula for $G$ a general compact connected Lie group.  And there is nothing which suggests that it would actually be impossible to find the spectrum of the Berezin transform in the general case.

Perhaps more tractable is the following problem: if $H$ is a subgroup of $G$ such that $\sigma^\lambda|_H$ is multiplicity free, then we have shown that the Toeplitz operators with $H$-invariant symbols commute.  One would like to explicitly find the spectrum of such Toeplitz operators, as in \cite{QV1, QV2}, especially in the case of $G=\U(n)$, $H=\U(n-1)$ and $G=\SO(n)$, $H=\SO(n-1)$.


\begin{thebibliography}{10}
   
   \bibitem{Berezin} F.\ A.\ Berezin. \textit{Quantization in complex bounded domains}. Dokl. Akad. Nauk SSSR \textbf{211} (1973), 1263--1266. 
  
   \bibitem{DOQ} M.\ Dawson, G.\ Ólafsson, R.\ Quiroga-Barranco. \textit{Commuting Toeplitz operators on bounded symmetric domains and multiplicity-free restrictions of holomorphic discrete series}. 
  Journal of Functional Analysis \textbf{268}, No.\ 7 (2015), 1711--1732.
   
   
   \bibitem{EU} M.\ Engliš,  H.\ Upmeier.  \textit{Asymptotic expansions for Toeplitz operators on symmetric spaces of general type}. Trans. Amer. Math. Soc. \textbf{367} (2015), 423-476.
   
   \bibitem{EU2}  \bysame \textit{Toeplitz Quantization and Asymptotic Expansions: Geometric Construction}. 
   Symmetry, Integrability and Geometry: Methods and Applications (SIGMA) \textbf{5} (2009), 021, 1--30.
   
   \bibitem{FH} C.\ Fronsdal and T.\ Hirai. \textit{A Remark on the Tensor Product of Irreducible Representations of a Compact Lie Group}, Japan J.\ Math \textbf{7}, No.\ 1 (1981), 201-211.
  
   \bibitem{HLW}I.\ Hirschman, Jr., D.\ Liang, E.\ Wilson \textit{Szegő limit theorems for Toeplitz operators on compact homogeneous spaces.} Trans. Amer. Math. Soc. \textbf{270} (1982), No. 2, 351--376.

   \bibitem{K} A.\ Knapp. \textit{Lie Groups: Beyond an Introduction}, 2nd Edition, Birkhäuser 2002.
   
   \bibitem{K2} \bysame. \textit{Representation Theory of Semisimple Lie Groups: An Overview Based on Examples}, Princeton University Press 1986.
   
  
   \bibitem{GQV} S.\ Grudsky, R.\ Quiroga-Barranco, N.\ Vasilevski.  \textit{Commutative $C^*$-algebras of Toeplitz operators and quantization on the unit disk}. Journal of Functional Analysis, \textbf{234}, No.\ 1 (2006), 1--44.
   

   \bibitem{Kobayashi1} T.\ Kobayashi. \textit{Geometry of multiplicity-free representations of GL(n), visible actions on flag varieties, and triunity}. Acta Appl. Math. 81 (2004), no. 1-3, 129–146.
  \bibitem{Kobayashi2} \bysame \textit{Multiplicity-free Representations and Visible Actions on Complex Manifolds}. Publ. Res. Inst. Math. Sci. \textbf{41} (2005), No. 3, 497--549.

   \bibitem{QV1} R.\ Quiroga-Barranco, N.\ Vasilevski. \textit{Commutative $C^*$-algebras of Toeplitz operators on the unit ball, I. 
    Bargmann type transforms and spectral representations of Toeplitz operators}, Integr.\ Equ.\ Oper.\ Theory, \textbf{59}, No.\ 3 (2007), 379--419. 
    
    \bibitem{QV2} \bysame \textit{Commutative $C^*$-algebras of Toeplitz operators on the unit ball, II. Geometry of the level sets of symbols}, Integr.\ Equ.\ Oper.\ Theory, \textbf{60}, No.\ 1 (2008), 89--132. 
    

   \bibitem{Schlichenmaier} M.\ Schlichenmaier. \textit{Berezin-Toeplitz quantization for compact Kähler manifolds: A review of results.} Advances in Mathematical Physics (Hindawi), \textbf{2010}  (2010), 1--38.
  
   \bibitem{Steinberg} R.\ Steinberg. Steinberg, Robert \textit{A general Clebsch-Gordan theorem.} Bull. Amer. Math. Soc. \textbf{67} (1961), 406--407.

   \bibitem{Szego} G.\ Szego, \textit{Ein Grenzwortsatz uber die Toeplitzschen Determinanten einer reel positiven Funktion}.   Math. Ann. \textbf{76} (1915), 490--503.

   \bibitem{T} N.\ Tatsuma. \textit{A Cyclic Vector in the Tensor Product of Irreducible Representations of Compact Lie Groups}, Proc.\ Japan.\ Acad.\ Ser.\ A \textbf{58}, No.\ 4 (1982), 141--142. 

   \bibitem{VasilevskiBook} N.\ Vasilevski. \textit{Commutative algebras of Toeplitz operators on the Bergman space.} Oper. Theory Adv. Appl. 185. Birkhäuser 2008.

   \bibitem{Zhang} G.\ Zhang.  \textit{Berezin transform on compact Hermitian symmetric spaces}. Manuscripta math. \textbf{97} (1998), 371--388.
  
  
\end{thebibliography}
\end{document}